%
%
%
%
\documentclass{tran-l}
\usepackage{graphicx}
\usepackage{epstopdf}
\usepackage{subfigure}
\usepackage{verbatim}
 \allowdisplaybreaks[1]

\newtheorem{theorem}{Theorem}[section]
\newtheorem{lemma}[theorem]{Lemma}

\theoremstyle{definition}
\newtheorem{definition}{Definition}[section]

\newtheorem{example}[theorem]{Example}
\newtheorem{xca}[theorem]{Exercise}
\newtheorem{proposition}[theorem]{Proposition}
\theoremstyle{remark}
\newtheorem{remark}[theorem]{Remark}

\numberwithin{equation}{section}

\newcommand{\eps}{\varepsilon}

\newcommand{\lam}{\lambda}
\newcommand{\alp}{\alpha}

\newcommand{\dx}{\,\mathrm{d}x}

\newcommand{\R}{{\mathbb{R}}}
\newcommand{\h}{{\mathcal{H}}}

\newcommand{\inte}{\int_{\mathbb{R}^2}}
\newcommand{\intB}{\int _{B_\delta (x_{2,k})}}
\newcommand{\intPB}{\int _{\partial B_\delta (x_{2,k})}}

\newcommand{\abs}[1]{\lvert#1\rvert}

\newcommand{\blankbox}[2]{%
  \parbox{\columnwidth}{\centering
    \setlength{\fboxsep}{0pt}%
    \fbox{\raisebox{0pt}[#2]{\hspace{#1}}}%
  }%
}


\begin{document}

\title[Semi-trivial Limit Behavior
of BEC Systems]{Ground States of Two-Component Attractive Bose-Einstein Condensates II: Semi-trivial Limit Behavior}

\author{Yujin Guo}
\address{Wuhan Institute of Physics and Mathematics,
Chinese Academy of Sciences, P.O. Box 71010, Wuhan 430071,
    P. R. China. }
\email{yjguo@wipm.ac.cn}
\thanks{Y. J.  Guo and S. Li are partially supported by NSFC under Grant No. 11671394 and MOST under Grant No. 2017YFA0304500. J. C. Wei is partially supported by NSERC of Canada. X. Y. Zeng is partially supported by NSFC grant 11501555.}

\author{Shuai Li}
\address{University of Chinese Academy of Sciences, Beijing 100190, P. R. China;  Wuhan Institute of Physics and Mathematics,
    Chinese Academy of Sciences, P.O. Box 71010, Wuhan 430071,
    P. R. China.}
\email{lishuai\_wipm@outlook.com}

\author{Juncheng Wei}
\address{Department of Mathematics, University of British Columbia, Vancouver, BC V6T 1Z2,
    Canada. }
\email{jcwei@math.ubc.ca}

\author{Xiaoyu Zeng}
\address{Department of Mathematics, Wuhan University of Technology, Wuhan 430070, P. R. China. }
\email{xyzeng@whut.edu.cn}

\subjclass[2010]{Primary 35J50, 35J47; Secondary 46N50}

\date{on August 15th, 2017; accepted on February 15th, 2018.}


\keywords{Gross-Pitaevskii equations, ground states, minimizers, mass concentration, semi-trivial solutions}

\begin{abstract}
As a continuation of \cite{GLWZ}, we study new pattern formations of ground states $(u_1,u_2)$ for two-component Bose-Einstein condensates (BEC) with homogeneous trapping potentials in $\R^2$, where the intraspecies interaction $(-a,-b)$ and the interspecies interaction $-\beta$ are both attractive, $i.e,$ $a$, $b$ and $\beta$ are all positive. If $0<b<a^*:=\|w\|^2_2$ and $0<\beta <a^*$ are fixed, where $w$ is the unique positive solution of $\Delta w-w+w^3=0$ in $\R^2$, the semi-trivial behavior of $(u_1,u_2)$ as $a\nearrow a^*$ is proved in the sense that $u_1$ concentrates at a unique point and while $u_2\equiv 0$ in $\R^2$. However, if $0<b<a^*$ and $a^*\le\beta <\beta ^*=a^*+\sqrt{(a^*-a)(a^*-b)}$, the refined spike profile and the uniqueness  of $(u_1,u_2)$ as $a\nearrow a^*$ are analyzed, where $(u_1,u_2)$ must be unique, $u_1$ concentrates at a unique point, and meanwhile $u_2$ can either blow up or vanish, depending on how $\beta$ approaches to $a^*$.
\end{abstract}

\maketitle

\tableofcontents

\section{Introduction}
In this paper, we study the following coupled nonlinear Gross-Pitaevskii system
\begin{equation}\label{equ:CGPS}
\begin{cases}
-\Delta u_{1} +V_1(x)u_{1} =\mu u_{1} +au_{1}^3 +\beta u_{2}^2 u_{1}   \,\ \mbox{in}\,\  \R^2,\\
-\Delta u_{2} +V_2(x)u_{2} =\mu u_{2} +bu_{2}^3 +\beta u_{1}^2 u_{2}   \,\ \mbox{in}\,\  \R^2,\,\
\end{cases}
\end{equation}
where $(u_{1} ,u_{2})\in\mathcal{X}=\h_1(\R ^2)\times \h_2(\R ^2)$  and
the space
\begin{equation*}
 \h_i(\R ^2) = \Big \{u\in  H^1(\R ^2):\ \int _{\R ^2}  V_i(x)|u(x)|^2\dx<\infty \Big\}
\end{equation*}
is equipped with the norm $\|u\|_{_{\h_i}}=\big(\int _{\R ^2} \big[|\nabla u|^2+ V_i(x)|u(x)|^2\big] \dx\big)^{\frac{1}{2}}$ for $i=1, 2$. The system \eqref{equ:CGPS} is used (see \cite{BC,DW,EGBB,FM,HMEWC,LWCMP,LW,LW2,PW,Royo}) to describe two-component Bose-Einstein condensates (BEC) with trapping potentials $V_1(x) $ and $V_2(x) $, where $\mu\in\R$ is a chemical potential.  From the physical point of view, we assume
that the trapping potentials $ V_i(x)\ge 0$ ($i=1, 2$) satisfy
\begin{equation}\label{cond:V.1}
\lim_{|x|\to\infty} V_i(x) = \infty, \ \inf_{x\in\R^2}V_i(x)=0
\text{ and } \inf\limits_{x\in \R^2} \big(V_1(x)+V_2(x)\big) \text{ are attained.}
 \end{equation}
In the system \eqref{equ:CGPS}, $a>0$ and $b>0$ ($resp.$ $<0$) represent that the intraspecies interaction  of the atoms inside each component is attractive  ($resp.$ repulsive), and $\beta>0$ ($resp.$ $<0$) denotes that the interspecies interaction  between two components is attractive ($resp.$ repulsive).

As a continuation of \cite{GLWZ}, in this paper we study ground states of (\ref{equ:CGPS}) for the case where the intraspecies interaction and interspecies interaction are both attractive, $i.e.$ $a, b, \beta>0$.
As illustrated in \cite[Proposition A.1]{GLWZ}, ground states of (\ref{equ:CGPS}) in this case can be described equivalently by the minimizers of the following $L^2-$critical constraint variational problem
\begin{equation}\label{def:e}
e(a,b,\beta):=\inf_{\{(u_1,u_2)\in \mathcal{X}:\, \int_{\R ^2}(u_1^2+u_2^2) \dx=1\}} E_{a,b,\beta}(u_1,u_2),\,\, a>0,\,\, b>0,\,\, \beta>0,
\end{equation}
where the Gross-Pitaevskii (GP) energy functional $ E_{a,b,\beta}(u_1,u_2)$ is given by
\begin{equation}\label{def:E}
\begin{split}
E_{a, b, \beta}(u_1,u_2)
=&\int_{\R ^2} \big(|\nabla u_1|^2+|\nabla u_2|^2\big)\dx +\int_{\R ^2}\big( V_1(x)u_1^2+V_2(x)u_2^2\big) \dx\\
&-\int_{\R ^2} \big(\frac{a}{2}|u_1|^4+\frac{b}{2}|u_2|^4 +\beta |u_1|^2|u_2|^2\big) \dx \,,\quad (u_1,u_2)\in\mathcal{X}.
\end{split}
\end{equation}
To discuss equivalently ground states of (\ref{equ:CGPS}), throughout the whole paper we shall therefore focus on investigating  \eqref{def:e}, instead of (\ref{equ:CGPS}). Since the GP energy functional $ E_{a, b, \beta}(u_1,u_2)$ is even in $(u_1,u_2)$, any minimizer $(u_1, u_2)$ of $e(a,b,\beta)$ must be either nonnegative or nonpositive. Without loss of generality, in this paper we therefore restrict to study nonnegative minimizers of $e(a,b,\beta)$, which are called {\em ground states} of (\ref{equ:CGPS}).

Besides the assumption \eqref{cond:V.1}, for the physical correlation we shall consider the trapping potentials $V_1(x)$ and $V_2(x)$ in the class of homogeneous functions, for which we define
\begin{definition}\label{cond:h.homo}
  $h(x)\geq0$ in $\R^2$ is homogeneous of degree $p\in\R^+$ (about the origin), if $h(x)$ satisfies
\begin{equation}\label{def:h.homo}
  h(tx)=t^p h(x)\,\,\,\text{in}\,\,\, \R^2\,\,\,\text{for any}\,\,\,t>0.
\end{equation}
\end{definition}
\noindent The above definition implies that the homogeneous function $h(x)$ of degree $p\in\R^+$ satisfies
\begin{equation}\label{prop:h.est}
  0\leq h(x)\leq C|x|^p \,\,\,\text{in} \,\,\,\R^2,
\end{equation}
where $C>0$ denotes the maximum of $h(x)$ on $\partial B_1(0)$. Note that $\nabla h(x)=0$ if and only if $x=0$ for the case where $\lim\limits_{|x|\to\infty}h(x)=+\infty$.
We also use $w=w(|x|)$ to denote (cf. \cite{BW,GNN,Kwong,LN,MS,RS}) the unique (up to translations) positive radially symmetric solution of the following nonlinear scalar field equation
\begin{equation}\label{equ:w}
\Delta w-w+w^3=0,\,\,\  w \in H^1(\R^2).
\end{equation}
We remark that $w$ satisfies (cf. \cite{GS}) the following identifies
\begin{equation}\label{ide:w}
  \|w\|_2^2=\|\nabla w\|_2^2=\frac{1}{2} \|w\|_4^4,
\end{equation}
and it follows from \cite[Proposition 4.1]{GNN} that $w(x)$ also satisfies
 \begin{equation} \label{decay:w}
w(x) \, , \ |\nabla w(x)| = O(|x|^{-\frac{1}{2}}e^{-|x|}) \,\
\text{as} \,\ |x|\to \infty.
\end{equation}
Recall from    \cite{GLWZ} that the analysis of $e(a,b,\beta)$ depends strongly on the following Gagliardo-Nirenberg type inequality
 \begin{equation}\label{ineq:GN}
 \int_{\R ^2} \big(|u_1|^2+|u_2|^2  \big)^2 \dx
 \le  \frac{2}{ \|w\|_2^2}
   \int_{\R ^2} \big(|\nabla u_1|^2+|\nabla u_2|^2\big) \dx  \int_{\R ^2}\big(u_1^2+u_2^2\big) \dx,
 \end{equation}
where $(u_1,u_2)\in H^1(\R^2)\times H^1(\R^2)$. It is proved in \cite[Lemma A.2]{GLWZ} that $\frac{2}{ \|w\|_2^2} $ is the best constant of (\ref{ineq:GN}), where the equality is attained at $(w\sin \theta  , w\cos \theta )$ for any $\theta \in [0,2\pi)$.

When $V_i(x)\in C^2(\R^2)$ is homogeneous of degree $p_i\ge 2$ and satisfies \eqref{cond:V.1} for $i=1$ and $2$, it then follows immediately from \cite{GLWZ} the following existence and nonexistence.

\vspace {.2cm}

\noindent {\bf Theorem A (Theorems 1.1 and 1.2 in \cite{GLWZ})} {\em
Suppose $V_i(x)\ge 0$ satisfies \eqref{cond:V.1}  and there exists at least one common point $x_0\in\R^2$ such that $V_i(x_0)=\inf\limits_{x\in\mathbb{R}^2}  V_i(x) =0$, where $i=1, \,2$. Set
\begin{equation}\label{thmAAA:1.1}
\beta^*=\beta^*(a,b):= a^* + \sqrt {(a^*-a)(a^*-b)}, \,\ \text{where}\,\ 0< a, \,b<a^*:=\|w\|_2^2.
\end{equation}
Then  $e(a,b,\beta)$  admits minimizers if and only if $0<a<a^*$, $0<b<a^*$ and  $0<\beta<\beta^*$.}

\vspace {.2cm}

\noindent
The above Theorem A shows that $e(a, b, \beta)$ admits minimizers if and only if the point $(a, b,\beta)$ lies within the cuboid described by Figure 1(a) below. Following \cite[Proposition A.1]{GLWZ} on the equivalence between ground states of (\ref{equ:CGPS}) and minimizers of $e(a, b, \beta)$, one can further obtain that for any given $(a, b,\beta)$,  a minimizer of $e(a,b,\beta)$ is a ground state of (\ref{equ:CGPS}) for some  $\mu\in \R$; conversely, a ground state of (\ref{equ:CGPS}) for some $\mu\in \R$ is a minimizer of $e(a, b, \beta)$.

By employing the energy method and blow up analysis, the uniqueness and the refined blow up behavior of nonnegative minimizers $(u_1, u_2)$ for $e(a,b,\beta)$ are investigated in \cite{GLWZ} under different types of trapping potentials, where we consider $0<a<a^*$, $0<b<a^*$ and $\beta \nearrow \beta ^*:=a^*+\sqrt{(a^*-a)(a^*-b)}$. In such a limit case, it turns out in \cite{GLWZ} that $(u_1, u_2)$ must be unique and blows up at a unique point. This further implies the strict positivity of $(u_1, u_2)$ in such a limit case.

The main purpose of this paper is to investigate new pattern formations of nonnegative minimizers $(u_1, u_2)$ for $e(a,b,\beta)$, where $0<b<a^*$,  $\beta\in (0, \beta ^*)=(0, a^*+\sqrt{(a^*-a)(a^*-b)})$ and $a\nearrow a^*$. Different from those studied in \cite{GLWZ}, we shall analyze that $(u_1, u_2)$ may admit the semi-trivial limit behavior for this case, depending on how $\beta$ approaches to $a^*$, in the sense that $u_1>0$ and $u_2\equiv 0$ in $\R^2$.

\subsection{Main results}

In this subsection, we shall introduce the main results of this paper. Stimulated by \cite[Theorem 1.1]{Grossi}, we define
\begin{equation}\label{H}
H_i(y):=\int_{\R^2}V_i(x+y)w^2(x)\dx>0, \ \text{ where } \ i=1,\, 2.
\end{equation}
We remark that our analysis also makes full use of the following classical Gagliardo-Nirenberg type inequality
\begin{equation}\label{ineq:GNQ}
\frac {\|w\|_2^{2}}{2}
  =\inf\limits_{u\in H^1(\R^2)\setminus \{0\}}
\frac{\inte |\nabla u(x) |^2 \dx \inte |u(x)|^2 \dx}{\inte |u(x)|^4  \dx},
\end{equation}
where the equality is attained at $w$ (cf. \cite{W}).
Our first result is concerned with the following interesting  limit behavior of nonnegative minimizers.

\begin{theorem}\label{thm:1.1}
Suppose $0\le V_i(x)\in C^2(\R^2)$ is homogeneous of degree $p_i$ with $2\le p_1\leq p_2$, where $V_i(x)$ satisfies  \eqref{cond:V.1} and
\begin{equation}\label{1:H}
\text{$y_0 $ is the unique and non-degenerate critical point of $H_1(y)$.}
\end{equation}
Let $ (u_{1k},u_{2k})$ be a nonnegative minimizer of $e(a_k, b, \beta _k)$, where $0<b<a^*:=\|w\|^2_2$, $a_k\nearrow a^*$ as $ k\to\infty$ and
\begin{equation}\label{con:a1beta.ab}
a^*<\beta _k  <\beta_k^*=a^*+\sqrt{(a^*-a_k)(a^*-b)} \ \text{ and }  \
 a^*-a_k=o\big(\beta_k-a^*\big)
\end{equation}
as $k\to\infty$.
Then there exists a subsequence, still denoted by $\{a_k\}$, of $\{a_k\}$   such that
\begin{equation}\label{thm1.2:M}
\sqrt{a^*} \varepsilon_k   u_{1k}(\varepsilon_k  x+x_{1k})\to w(x)\ \,\,\text{and}\,\,\
  \sqrt{\frac{a^*(a^*-b)}{\beta _k -a^*}}    \varepsilon_k   u_{2k}(\varepsilon_k  x+x_{2k})\to w(x)
\end{equation}
uniformly in $\R^2$ as $ k\to\infty$, where $\varepsilon_k>0$ is given by
\begin{equation}\label{1.2:eps.rate}
  \varepsilon_k:=\frac{1}{\lambda}\Big[(a^*-a_k)(a^*-b)-(\beta_k-a^*)^2\Big]^\frac{1}{2+p_1},\,\,  \lambda =\Big[\frac{p_1(a^*-b)}{2}H_1(y_0 )\Big]^\frac{1}{2+p_1},
\end{equation}
and $x_{ik}$ is the unique maximum point of $u_{ik}$ satisfying
\begin{equation}\label{lim:betaa1.z.rate-D}
  \lim\limits_{k\to\infty}\frac{x_{ik}}{\varepsilon_k}=y_0,\ \ i=1, 2.
\end{equation}
\end{theorem}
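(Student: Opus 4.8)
The plan is to combine energy estimates (upper and lower bounds on $e(a_k,b,\beta_k)$) with a blow-up analysis, in the spirit of the companion paper \cite{GLWZ} and of the single-equation theory (Guo--Zeng, Grossi). The starting point is to derive a sharp energy expansion. For the \emph{upper bound}, I would plug into $E_{a_k,b,\beta_k}$ a carefully chosen trial pair of the form $(u_1,u_2)=(A_\tau \tau w(\tau(x-z)), B_\tau \tau w(\tau(x-z)))$ — i.e.\ both components are rescaled copies of $w$ centered at a common point $z$ near $y_0$, with amplitudes $A_\tau,B_\tau$ chosen so that the constraint $\int(u_1^2+u_2^2)=1$ holds and so that the quartic terms are balanced. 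Using the identities \eqref{ide:w} and the homogeneity $V_i(z+\tau^{-1}x)\approx V_i(z)+\tau^{-p_i}(\cdots)$ together with the definition \eqref{H} of $H_i$, optimizing over $\tau$ and over $z$ (whose optimum forces $z\to y_0$ because of \eqref{1:H}) should give
\begin{equation*}
e(a_k,b,\beta_k)\le \frac{2+p_1}{p_1}\Big[\frac{p_1(a^*-b)}{2}H_1(y_0)\Big]^{\frac{p_1}{2+p_1}}\Big[(a^*-a_k)(a^*-b)-(\beta_k-a^*)^2\Big]^{\frac{p_1}{2+p_1}}(1+o(1)),
\end{equation*}
with the scale $\eps_k$ of \eqref{1.2:eps.rate} emerging as the minimizer. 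The condition $a^*-a_k=o(\beta_k-a^*)$ guarantees that the bracket $[(a^*-a_k)(a^*-b)-(\beta_k-a^*)^2]$ is positive (this is exactly $\beta_k<\beta_k^*$) but small, so $\eps_k\to0$.

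Next I would establish the matching \emph{lower bound} and the compactness. Let $(u_{1k},u_{2k})$ be a minimizer; from the upper bound and \eqref{ineq:GN} one reads off that $\int(|\nabla u_{1k}|^2+|\nabla u_{2k}|^2)\to\infty$ at the rate $\eps_k^{-2}$, and that the potential energy $\int(V_1u_{1k}^2+V_2u_{2k}^2)\to0$. Define the rescalings $\bar u_{ik}(x)=\eps_k u_{ik}(\eps_k x+x_{1k})$ where $x_{1k}$ is a maximum point of $u_{1k}$. Standard arguments (testing the Euler--Lagrange system \eqref{equ:CGPS}, elliptic regularity, a vanishing-dichotomy-compactness trichotomy applied to the vector problem) show that $(\bar u_{1k},\bar u_{2k})$ converges to a nonnegative minimizer of the limiting problem attached to the best constant in \eqref{ineq:GN}; by the characterization quoted after \eqref{ineq:GN} any such minimizer is $(w\sin\theta,w\cos\theta)$ up to translation. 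The heart of the matter is then to show $\theta$ is such that the \emph{first} component carries the full mass while the second is of lower order — this is where \eqref{con:a1beta.ab} enters: because $\beta_k-a^*$ is the dominant small parameter and $b<a^*$ is fixed, an expansion of the quartic energy shows that it is energetically favorable for $u_{2k}$ to be $\sqrt{(\beta_k-a^*)/(a^*-b)}$ times smaller than $u_{1k}$, forcing $\theta\to\pi/2$ in the $\bar u_{1k}$ frame but giving a nontrivial limit after the \emph{different} rescaling applied to $u_{2k}$ in \eqref{thm1.2:M}. Matching the refined upper and lower energy expansions to the order $\eps_k^{p_1}$ pins down both the precise constant $\lambda$ and the location $x_{1k}/\eps_k\to y_0$ via the non-degeneracy \eqref{1:H} (a standard argument: the next-order term is $\eps_k^{p_1}H_1(x_{1k}/\cdot)$-like, minimized only at $y_0$, and non-degeneracy upgrades weak convergence of the centers to the stated limit, also yielding uniqueness of the maximum point for $k$ large).

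The uniform ($C^0$, indeed exponential) convergence in \eqref{thm1.2:M} then follows from the uniform decay estimate \eqref{decay:w} for $w$ together with a comparison/barrier argument for the rescaled system: once $\bar u_{ik}\to w$ in $H^1$ and the rescaled potentials and coupling terms are controlled, the components satisfy $-\Delta \bar u_{ik}+\frac12 \bar u_{ik}\le 0$ outside a large ball for $k$ large, so a De Giorgi--Nash--Moser iteration plus an exponential barrier gives the uniform bound and hence uniform convergence. Finally, to locate $x_{2k}$ and prove $x_{2k}/\eps_k\to y_0$ as well, I would note that $u_{2k}$ solves the second equation in \eqref{equ:CGPS} with a potential that, after rescaling, is a perturbation of $-1+w^2$ (via the large coefficient $\beta u_{1k}^2$ which concentrates at $x_{1k}/\eps_k\to y_0$); so $\bar u_{2k}$ is forced to concentrate at the same point, and an analogous non-degeneracy argument using $H_2$ (or simply the fact that both components see the same dominant profile $w^2$) fixes the limit of $x_{2k}/\eps_k$.

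The main obstacle I anticipate is the \emph{second step}: proving that the mass of $u_{2k}$ is exactly of order $\sqrt{(\beta_k-a^*)/(a^*-b)}$ relative to $u_{1k}$ — neither larger (which \eqref{ineq:GN} and the upper bound would contradict) nor smaller (which would make the energy suboptimal). This requires a genuinely two-parameter refined energy expansion in which the competition between the fixed gap $a^*-b$ and the vanishing gaps $a^*-a_k$, $\beta_k-a^*$ is resolved, and it is precisely the hypothesis $a^*-a_k=o(\beta_k-a^*)$ in \eqref{con:a1beta.ab} that selects the regime where $u_{2k}$ survives after its own rescaling rather than being completely negligible. Controlling the cross term $\beta_k\int u_{1k}^2u_{2k}^2$ to the needed precision, uniformly in $k$, is the delicate part.
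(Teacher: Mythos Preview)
Your framework for the first component $u_{1k}$ --- trial functions, matching lower bound, blow-up to $w$, and localization of $x_{1k}/\eps_k$ at $y_0$ --- is essentially what the paper does in Proposition~\ref{Prop:a1beta.3}, and the upper bound you sketch is exactly the test pair \eqref{def:betaa1.trial}. So that half is fine.

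Where your plan diverges, and where there is a real gap, is the treatment of $u_{2k}$. You propose to pin down the size of $u_{2k}$ relative to $u_{1k}$ by a two-parameter refined energy expansion. The paper does \emph{not} do this for Theorem~\ref{thm:1.1}. Instead it (i) first proves $u_{2k}\not\equiv 0$ for large $k$ by contradiction (Lemma~\ref{lem3:u2}): if $u_{2k}\equiv 0$ then $u_{1k}$ is a single-equation minimizer whose energy, using the refined spike expansion of \cite[Theorem~1.2]{GLW}, satisfies \eqref{est:low.e}, and this is strictly larger than the vector upper bound \eqref{3:upper-1} precisely because $a^*-a_k=o(\beta_k-a^*)$. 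This is where the non-degeneracy in \eqref{1:H} is actually used --- it is needed to invoke the refined single-equation profile from \cite{GLW}. Your remark that ``smaller would make the energy suboptimal'' is the right intuition but underestimates what is required: you need the single-equation expansion to order $O(a^*-a_k)$, not just the leading term. Then (ii) the paper normalizes $u_{2k}$ by $\sigma_k=\|u_{2k}\|_\infty$, shows via a purely PDE argument (Lemma~\ref{lem3.4A}, using \cite[Lemma~4.1]{Wei96} on the limiting linear equation $-\Delta u+u=w^2 u$) that the normalized function converges to $w$, and finally (iii) determines the scale $\sigma_k$ by testing the first-order corrector equations \eqref{4.1:7} against $w$ and subtracting the two resulting identities \eqref{4.1:8}--\eqref{4.1:9} to obtain the algebraic relation \eqref{4.1:sigma-1}, which gives $a^*(a^*-b)C_\infty^2\sigma_k^2\eps_k^2\sim \beta_k-a^*$. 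This is a linearized-PDE computation, not a variational one.

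The paper does contain an energy-based route of the type you sketch (Lemma~\ref{lem3.6} and Theorem~\ref{thm3.7}), and it is worth noting the trade-off: that argument avoids the non-degeneracy of $y_0$ but requires the extra hypothesis $(\beta_k-a^*)/\sqrt{(a^*-a_k)(a^*-b)}\to\varsigma_0\in(0,1)$, i.e.\ $\beta_k$ bounded away from both $a^*$ and $\beta_k^*$ at the natural rate. Under the weaker assumption \eqref{con:a1beta.ab} alone, the energy method as you describe it does not obviously close --- the quantity $(\beta_k-a^*)^2/[(a^*-a_k)(a^*-b)]$ may tend to $0$, and then the lower bound \eqref{L:estsub:betaa1.gn} loses the control on $\|u_{2k}\|_4/\|u_{1k}\|_4$ that you need. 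So your ``main obstacle'' is correctly identified, but the resolution in the generality of Theorem~\ref{thm:1.1} goes through the PDE linearization rather than a sharper energy expansion.
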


We remark that the similar estimate of (\ref{lim:betaa1.z.rate-D}) appeared earlier in \cite{Grossi}, where  a singular perturbation  problem was studied.
Even though Theorem \ref{thm:1.1} is proved mainly by the variational methods and blow up analysis as employed in \cite{GLWZ,GZZ2,Lions,S,Wang}, there are some new difficulties appearing in its proof. Firstly, since the blow up rate (\ref{1.2:eps.rate}) of Theorem \ref{thm:1.1} is different from those in \cite{GS,GLWZ,GZZ2,MMP,Manakov} and references therein, as in Proposition \ref{Prop:a1beta.3} one needs to seek for a different type of test functions so that the optimal upper estimate of $e(a_k, b, \beta _k)$ can be derived. Secondly, since the existing argument only gives that $\varepsilon_k u_{2k}(\varepsilon_k x+x_{1k})\to 0$ uniformly in $\R^2$ as $ k\to\infty$, one needs to investigate an approach of addressing that $u_{2k}\not\equiv 0$ for sufficiently large $k>0$. As shown in Lemma \ref{lem3:u2}, we shall achieve this purpose by analyzing the more refined energy estimates of $e(a_k, b, \beta _k)$, for which we make full use of the refined spike profiles proved in \cite[Theorem 1.2]{GLW}. Once $u_{2k}\not\equiv 0$ holds for sufficiently large $k>0$, we define
\begin{equation}\label{A:4.1:PL}
\begin{split}
\bar u_{1k}(x)=\sqrt{a^*} \varepsilon_{k}  u_{1k}(\varepsilon_{k}x+x_{1k})\,\  \text{and}\,\ u_{2k}(\varepsilon_{k}x+x_{1k})=C_\infty \sigma _k\bar u_{2k}(x), \\
\text{where}\,\  \sigma _k=\|u_{2k}\|_\infty>0 \,\  \text{and}\,\ C_\infty=\frac{1}{\|w\|_\infty}> 0,\qquad\qquad\qquad\quad\quad
\end{split}
\end{equation}
and $x_{1k}$ is the unique maximum point of $u_{1k}$. To complete the proof of Theorem \ref{thm:1.1}, the rest key point is thus to analyze the limit behavior of $\bar u_{2k}$ and $\sigma _k$ as $ k\to\infty$, for which we shall carry out a very delicate analysis of the PDE system associated to $(\bar u_{1k}, \bar u_{2k})$. We also remark that if $\beta _k$ is close enough to $\beta ^*_k$, the limit behavior (\ref{thm1.2:M}) still holds without the non-degeneracy assumption of \eqref{1:H}, see Theorem \ref{thm3.7} for more details.

Under the assumptions of Theorem \ref{thm:1.1}, one can note from (\ref{thm1.2:M}) that the nonnegative minimizers of $e(a_k, b, \beta _k)$ exhibit interesting new pattern formations where $u_{1k}$ blows up at a unique point and however $u_{2k}$ can either blow up or vanish, depending on how $\beta _k$ approaches to $a^*$. More precisely, for given $(a_k, b)$, if $\beta _k$ goes closer to $\beta ^*_k$, then $u_{2k}$ prefers to blow up at a unique point; conversely, if $\beta _k$ goes far away from $\beta ^*_k$, then $u_{2k}$ tends to decrease its height. Especially, if $\beta _k\le a ^*$ we then have the following semi-trivial limit behavior of nonnegative minimizers,  in the sense that $u_{1k}$ blows up at a unique point and however $u_{2k} \equiv 0$ for sufficiently large $k>0$. We also comment that the authors in \cite{BSW16,BSW17} analyzed recently three-component Schr\"{o}dinger systems in which some similar semi-trivial limits were found.

\begin{figure}[tb]
\centering
\subfigure[]{\includegraphics[width = 6cm,height=4.8cm,clip]{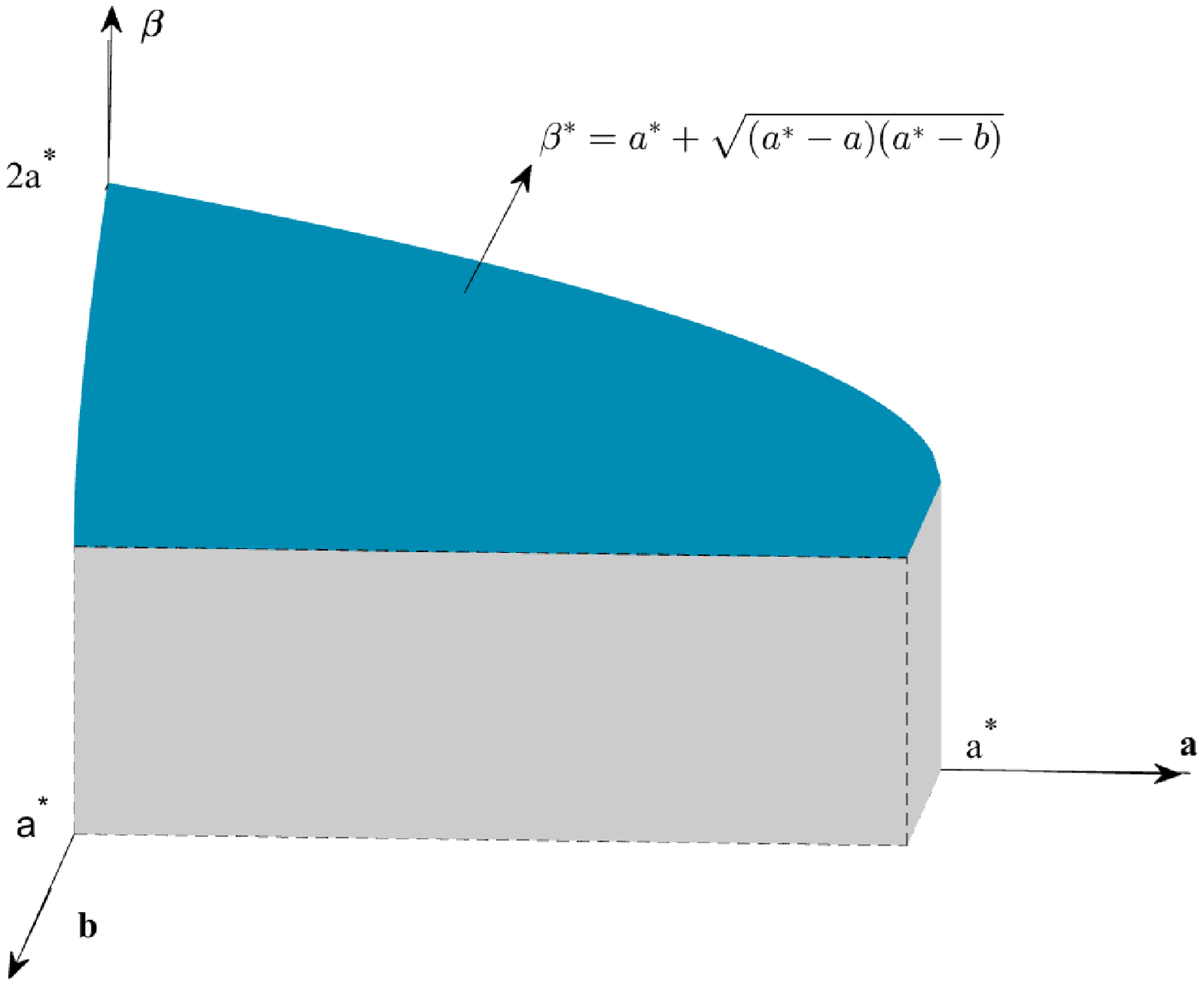}}
\subfigure[]{\includegraphics[width = 5cm,height=4.2cm,clip]{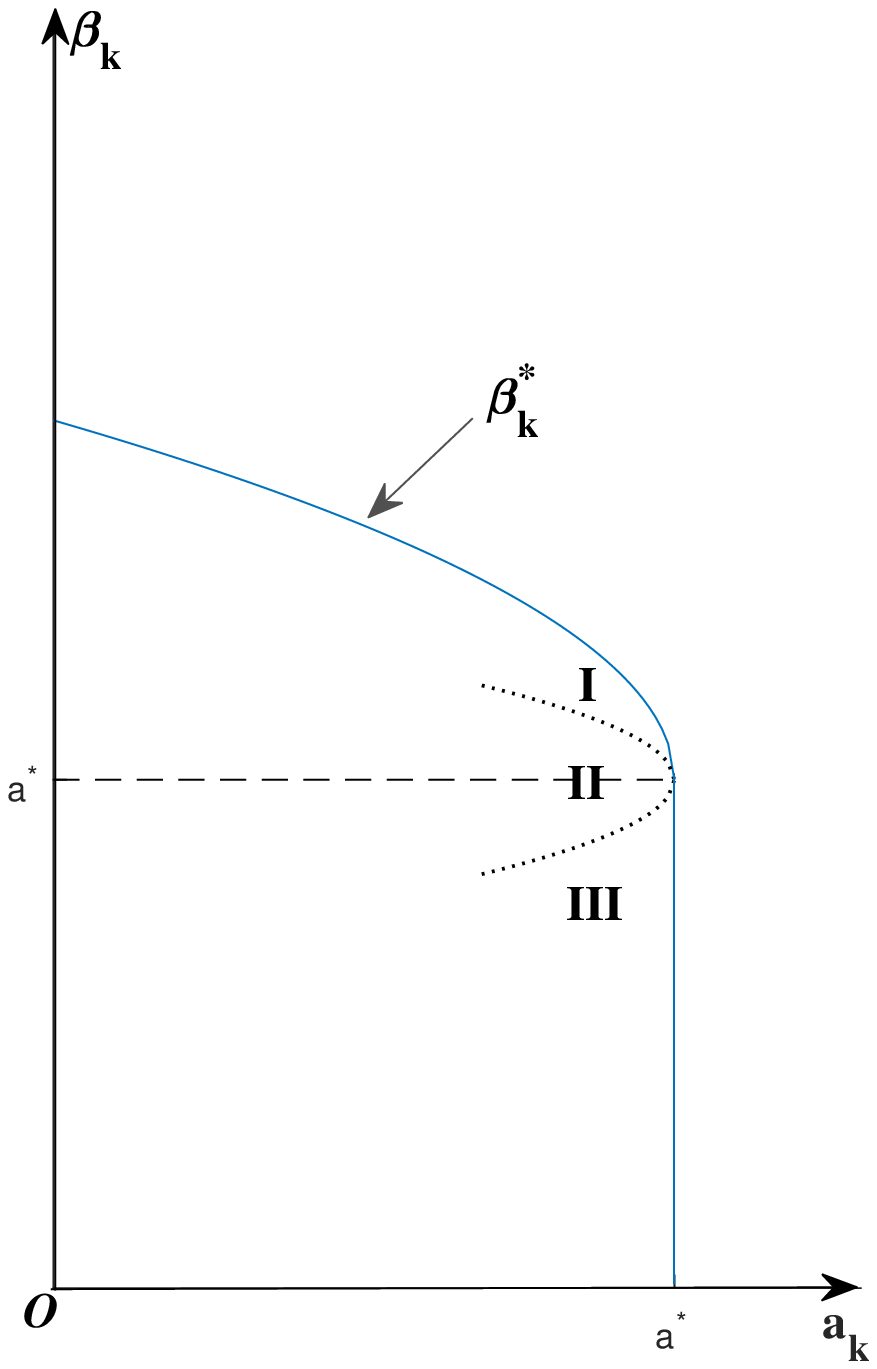}}
    \caption{\em
  Left figure: $e(a, b,\beta)$ has minimizers  if and only if  $(a, b,\beta)$ lies within the cuboid. Right figure:  For any given $0<b<a^*$, if $(a_k, \beta _k)$ lies within Region I, then $(u_{1k}, u_{2k})$ satisfies (\ref{thm1.2:M}); if   $(a_k, \beta _k)$ lies within Region III, then $(u_{1k}, u_{2k})$ satisfies (\ref{K:lim:a1.u.rate}).}
 \end{figure}

\begin{theorem}\label{thm:1.2}
Suppose $0\le V_i(x)\in C^2(\R^2)$ is homogeneous of degree $p_i\ge 2$ and satisfies \eqref{cond:V.1} for $i=1$ and $2$. Assume also that
\begin{equation}\label{cond:Vcritical.u1}
\text{$y_0 $ is the unique  critical point of $H_1(y)$.}
\end{equation}
Let $(u_{1k},u_{2k})$ be a nonnegative minimizer of $e(a_k,b,\beta _k)$, where $0<b<a^*$,  $a_k\nearrow a^*$ as $k\to\infty$ and $0<\beta _k<a^*$ satisfies
\begin{enumerate}
\item either $\beta _k\to \beta _*\in (0, a^*)$ as $k\to\infty$, or
\item $\beta_k\nearrow a^*$ and $a^*-a_k=o\big(a^*-\beta_k\big)$ as $k\to\infty$.
\end{enumerate}
 Then, up to a subsequence if necessary, we have
\begin{equation}\label{K:lim:a1.u.rate}
\begin{cases}
\lim\limits_{k\to\infty}\sqrt{a^*} \varepsilon_k  u_{1k}(\varepsilon_kx+x_{1k})=w(x)\,\, \text{uniformly in $\R^2$,}\\
u_{2k}(x)\equiv0\,\,\,\text{in $\R^2$, when $k>0$ is large enough},
\end{cases}
\end{equation}
where
\begin{equation}\label{K:def:a1.eps}
\varepsilon_k :=\frac{1}{\lambda_1}(a^*-a_k)^\frac{1}{p_1+2}>0,\,\,\
\lambda_1:=\Big[\frac{p_1}{2}H_1(y_0 )\Big]^\frac{1}{2+p_1},
\end{equation}
and the point $x_{1k}$ is the unique maximum point of $u_{1k}$ satisfying
\begin{equation}\label{K:lim:a1.z.rate}
  \lim\limits_{k\to\infty}\frac{x_{1k}}{\varepsilon_k} =y_0 .
\end{equation}
\end{theorem}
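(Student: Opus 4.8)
\textit{Proof strategy.} The plan is to prove that for all large $k$ every nonnegative minimizer of $e(a_k,b,\beta_k)$ has $u_{2k}\equiv0$; then $u_{1k}$ is a minimizer of the single‑component problem
\begin{equation*}
e_1(a):=\inf\Big\{\int_{\R^2}\!\big(|\nabla u|^2+V_1u^2\big)\dx-\frac a2\int_{\R^2}\!u^4\dx:\ u\in\h_1(\R^2),\ \int_{\R^2}u^2\dx=1\Big\},\qquad 0<a<a^*,
\end{equation*}
and (\ref{K:lim:a1.u.rate})--(\ref{K:lim:a1.z.rate}) become the known concentration behavior of $e_1(a_k)$ as $a_k\nearrow a^*$. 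Two preliminary facts are used throughout: testing $E_{a_k,b,\beta_k}$ against $(u,0)$ gives $e(a_k,b,\beta_k)\le e_1(a_k)$, and the sharp single‑component asymptotics (of Guo--Seiringer type, refined in \cite[Theorem 1.2]{GLW}) give $e_1(a_k)\sim C_0H_1(y_0)^{2/(p_1+2)}(a^*-a_k)^{p_1/(p_1+2)}$ with $C_0>0$ explicit; in particular $e_1(a_k)\to0$ with rate $(a^*-a_k)^{p_1/(p_1+2)}=(\lambda_1\varepsilon_k)^{p_1}$.

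The heart of the proof is a lower bound valid for \emph{every} admissible $(u_1,u_2)$. Write $m:=\int_{\R^2}u_2^2\dx$, so $\int_{\R^2}u_1^2\dx=1-m$. Estimating the cross term by $\beta_k\int u_1^2u_2^2\le\tfrac{\beta_k\eta}{2}\|u_1\|_4^4+\tfrac{\beta_k}{2\eta}\|u_2\|_4^4$ with the choice $\eta=\tfrac{a_km}{\beta_k(1-m)}$, the $u_1$‑part becomes $\|\nabla u_1\|_2^2+\int V_1u_1^2-\tfrac{a_k}{2(1-m)}\|u_1\|_4^4\ge(1-m)e_1(a_k)$ after the $L^2$‑scaling $u_1\mapsto u_1/\sqrt{1-m}$, while on the $u_2$‑part the Gagliardo--Nirenberg inequality (\ref{ineq:GNQ}) gives $\|u_2\|_4^4\le\tfrac{2m}{a^*}\|\nabla u_2\|_2^2$. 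Collecting terms,
\begin{equation*}
E_{a_k,b,\beta_k}(u_1,u_2)\ \ge\ (1-m)e_1(a_k)+\Big(1-\frac{bm}{a^*}-\frac{\beta_k^2(1-m)}{a_ka^*}\Big)\|\nabla u_2\|_2^2+\int_{\R^2}V_2u_2^2\dx .
\end{equation*}
The coefficient of $\|\nabla u_2\|_2^2$ is affine in $m$, equal to $1-\tfrac b{a^*}>0$ at $m=1$ and to $\rho_k:=1-\tfrac{\beta_k^2}{a_ka^*}$ at $m=0$; and $\beta_k^2<a_ka^*$ (hence $\rho_k>0$) for all large $k$ in both cases, since in case (1) $a_k\to a^*$ while $\beta_k\to\beta_*<a^*$, and in case (2) $a_ka^*-\beta_k^2=(a^*-\beta_k)(a^*+\beta_k)-a^*(a^*-a_k)>0$ because $a^*-a_k=o(a^*-\beta_k)$. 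So this coefficient is $\ge c_k:=\min\{\rho_k,1-\tfrac b{a^*}\}>0$, and the degree‑$p_2$ homogeneity of $V_2$ makes $-c_k\Delta+V_2$ scale so that $c_k\|\nabla u_2\|_2^2+\int V_2u_2^2\ge m\,\Lambda_2\,c_k^{\,p_2/(p_2+2)}$, where $\Lambda_2>0$ is the ground‑state level of $-\Delta+V_2$. Altogether
\begin{equation*}
E_{a_k,b,\beta_k}(u_1,u_2)\ \ge\ e_1(a_k)+m\big(\Lambda_2\,c_k^{\,p_2/(p_2+2)}-e_1(a_k)\big).
\end{equation*}

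Hence, once $\Lambda_2c_k^{p_2/(p_2+2)}>e_1(a_k)$, any admissible pair with $m>0$ satisfies $E_{a_k,b,\beta_k}(u_1,u_2)>e_1(a_k)\ge e(a_k,b,\beta_k)$, forcing every minimizer to have $u_{2k}\equiv0$, so $e(a_k,b,\beta_k)=e_1(a_k)$ and $u_{1k}$ minimizes $e_1(a_k)$. The inequality $\Lambda_2c_k^{p_2/(p_2+2)}>e_1(a_k)$ is immediate in case (1) ($c_k$ tends to a fixed positive constant while $e_1(a_k)\to0$); in case (2), $c_k\sim\rho_k\sim\tfrac{2(a^*-\beta_k)}{a^*}$, so it amounts to $a^*-\beta_k\gg(a^*-a_k)^{\theta}$ with $\theta=\tfrac{p_1(p_2+2)}{p_2(p_1+2)}$, which follows from $a^*-a_k=o(a^*-\beta_k)$ when $\theta\le1$ and in the remaining borderline range is recovered by keeping the cross term together with $\|u_1\|_4^4$ and comparing with the refined spike profile of the $e_1(a_k)$‑minimizers from \cite[Theorem 1.2]{GLW}. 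Finally, with $u_{2k}\equiv0$ the limits (\ref{K:lim:a1.u.rate})--(\ref{K:lim:a1.z.rate}) are the standard blow‑up statements for $L^2$‑critical single‑component minimizers as $a_k\nearrow a^*$: the energy bounds give $\|\nabla u_{1k}\|_2^2\sim\varepsilon_k^{-2}$; the rescaled family $\sqrt{a^*}\,\varepsilon_ku_{1k}(\varepsilon_k\cdot+x_{1k})$ has a weak $H^1$‑limit identified with $w$ through the equality case of (\ref{ineq:GN}); elliptic estimates with the decay (\ref{decay:w}) upgrade this to uniform convergence; and expanding $H_1$ near its unique critical point $y_0$ (assumption (\ref{cond:Vcritical.u1})) pins down $x_{1k}/\varepsilon_k\to y_0$.

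The step I expect to be the main obstacle is this last part of the lower bound in the borderline regime of case (2) (roughly $\beta_k\nearrow a^*$ with $p_1<p_2$): there $\Lambda_2c_k^{p_2/(p_2+2)}$ and $e_1(a_k)$ both vanish, and the crude Gagliardo--Nirenberg/Schr\"odinger estimate above — which discards the favourable coupling term — is no longer decisive; one must instead run a precise energy expansion around the $e_1(a_k)$‑spike, using the refined profile of $u_{1k}$, to show that any $m>0$ strictly increases $E_{a_k,b,\beta_k}$.
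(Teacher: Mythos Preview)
Your approach is genuinely different from the paper's and, where it works, more elementary. In case (1) your energy lower bound is clean and correct: the Young/Gagliardo--Nirenberg splitting gives
\[
E_{a_k,b,\beta_k}(u_1,u_2)\ge e_1(a_k)+m\Big(\Lambda_2\,c_k^{p_2/(p_2+2)}-e_1(a_k)\Big),
\]
and since $c_k\to\min\{1-\beta_*^2/(a^*)^2,\,1-b/a^*\}>0$ while $e_1(a_k)\to0$, the bracket is positive and every minimizer has $m=0$. This avoids the paper's blow-up analysis and the appeal to \cite[Lemma~4.1]{Wei96}.

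There is, however, a real gap in case (2). First, a sign slip: your claim that $a^*-\beta_k\gg(a^*-a_k)^\theta$ follows from $a^*-a_k=o(a^*-\beta_k)$ ``when $\theta\le1$'' is backwards. For small positive numbers one has $(a^*-a_k)^\theta\le a^*-a_k$ only when $\theta\ge1$, i.e.\ $p_1\ge p_2$; when $\theta<1$ (equivalently $p_1<p_2$) the inequality can fail---take for instance $a^*-a_k=(a^*-\beta_k)^{1/\theta}$. So your energy bound is decisive only for $p_1\ge p_2$, and the ``borderline'' you correctly flag at the end is precisely the case $p_1<p_2$, not its complement. Your proposed fix (``keep the cross term and compare with the refined spike profile'') is not carried out, and it is not clear it can be: once $(a^*-\beta_k)^{p_2/(p_2+2)}\lesssim(a^*-a_k)^{p_1/(p_1+2)}$, your lower bound simply does not separate $m>0$ from $m=0$, and a much finer expansion would be needed.

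The paper circumvents this entirely by arguing at the level of the limiting PDE rather than the energy. Assuming $u_{2k}\not\equiv0$, it normalizes $u_{2k}$ by its $L^\infty$ norm $\sigma_k$, shows the rescaled pair converges to $(w,w)$, and integrates the first-order corrector equations against $w$. This produces the exact identity
\[
2a^*(a^*-b)C_\infty^2\sigma_k^2\varepsilon_k^2
=2(a^*-a_k)-2(a^*-\beta_k)+H_1(y_0)\varepsilon_k^{2+p_1}-H_2(y_0)\varepsilon_k^{2+p_2},
\]
whose right side is $\le M(a^*-a_k)-2(a^*-\beta_k)<0$ by the hypothesis $a^*-a_k=o(a^*-\beta_k)$, contradicting positivity of the left side. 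The key point is that the comparison happens at order $a^*-a_k$ versus $a^*-\beta_k$ directly, with the $p_2$-dependent term entering only as a lower-order correction regardless of the sign of $p_2-p_1$; this is what your energy bound, which routes the $u_2$ cost through the $V_2$-Schr\"odinger operator and hence through the exponent $p_2/(p_2+2)$, cannot see.
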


The challenging point of proving Theorem \ref{thm:1.2} is to prove that $u_{2k}\equiv 0$ for sufficiently large $k>0$. Roughly speaking, by contradiction if $u_{2k} \not \equiv 0$ for the case where $0<\beta _k<a^*$ satisfies $\beta _k\to \beta _*\in (0, a^*)$ as $k\to\infty$, a suitable transform of $u_{2k}$ then approaches to a nontrivial nonnegative solution of $\Delta u-u+\frac{\beta _*}{a^*}w^2u=0$ in $\R^2$, which is however a contradiction in view of \cite[Lemma 4.1]{Wei96}, see Theorem \ref{Th:b1} for details. However, if $u_{2k} \not \equiv 0$ for the case where  $0<\beta _k<a^*$ satisfies $\beta_k\nearrow a^*$ and $a^*-a_k=o\big(a^*-\beta_k\big)$ as $k\to\infty$, we shall consider (\ref{A:4.1:PL}) as a transform of $u_{2k}$, from which the argument of proving Theorem \ref{thm:1.1} finally leads to a contradiction.
As illustrated by Figure 1(b), we also mention that for any given $0<b<a^*$, if $(a_k, \beta _k)$ approaches to $(a^*, a^*)$ within Region I ($resp.$ Region III), then the limit behavior of $(u_{1k}, u_{2k})$ can be described by  Theorem  \ref{thm:1.1} ($resp.$ Theorem \ref{thm:1.2}). However, for any $0<b<a^*$, if $(a_k, \beta _k)$ approaches to $(a^*, a^*)$ within Region II, we expect that $u_{2k}$ can either blow up or vanish, depending on $V_i(x)$ and how $\beta _k$ approaches to $a^*$.

%
%

Under the non-degeneracy assumption of \eqref{1:H}, we finally address the following uniqueness of nonnegative minimizers for $e(a, b, \beta)$.

\begin{theorem}\label{thm1.3}
Suppose $0\le V_i(x)\in C^2(\R^2)$ is homogeneous of degree $p_i$ and satisfies  \eqref{cond:V.1}  and (\ref{1:H}), where $2\le p_1\le p_2$
and $H_2(y_0)\not =H_1(y_0)$ for the case $p_1= p_2$.
Then there exists a unique nonnegative minimizer for $e(a, b, \beta)$, where $(a, b, \beta)$ satisfies
\begin{equation}\label{5:beta}
0<b<a^*, \ \  a^*\le \beta <\beta ^*:=a^*+\sqrt{(a^*-a)(a^*-b)},\ \  a^*-a=o\big(\beta -a^*\big)   
\end{equation}
as $ a\nearrow a^*$.
\end{theorem}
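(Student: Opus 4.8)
The plan is to argue by contradiction, assuming that for some sequence $(a_k,b,\beta_k)$ satisfying \eqref{5:beta} there exist two distinct nonnegative minimizers $(u_{1k},u_{2k})$ and $(v_{1k},v_{2k})$ of $e(a_k,b,\beta_k)$. First I would invoke Theorem \ref{thm:1.1}: since \eqref{5:beta} forces $a^*\le\beta_k<\beta_k^*$ with $a^*-a_k=o(\beta_k-a^*)$, both $(u_{1k},u_{2k})$ and $(v_{1k},v_{2k})$ obey the spike asymptotics \eqref{thm1.2:M}--\eqref{lim:betaa1.z.rate-D} with the same $\varepsilon_k$ and the same blow-up location $y_0$; in particular $u_{2k}\not\equiv0$ and $v_{2k}\not\equiv0$ for large $k$, so both are genuine two-component ground states. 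The heart of the matter is then a local uniqueness argument for the blow-up profiles: rescale each minimizer as in \eqref{A:4.1:PL}, set $\xi_k=(u_{1k}-v_{1k})/\|u_{1k}-v_{1k}\|_*$ and $\eta_k$ the analogous normalized difference for the second components (in an appropriately weighted norm), and derive the linearized system they satisfy.

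The key steps, in order: (1) Record the Euler--Lagrange system for a minimizer, namely \eqref{equ:CGPS} with the Lagrange multiplier $\mu=\mu_k$ determined by the constraint, and obtain sharp expansions of $\mu_k$, of the maximum points $x_{ik}$, and of the energy $e(a_k,b,\beta_k)$ — these refine \eqref{1.2:eps.rate} and are exactly the kind of estimates produced in the proof of Theorem \ref{thm:1.1}. (2) Subtract the two systems, divide by the norm of the difference, and pass to the limit: after rescaling by $\varepsilon_k$ and translating to $x_{1k}$, the pair $(\xi_k,\eta_k)$ converges (in $C^2_{loc}$, using elliptic estimates and the exponential decay \eqref{decay:w}) to a bounded solution $(\xi_0,\eta_0)$ of the linearized operator about $(w,0)$ — that is, $\mathcal L_1\xi_0:=\Delta\xi_0-\xi_0+3w^2\xi_0$ coupled to $\Delta\eta_0-\eta_0+\frac{\beta_*-a^*}{a^*-b}\,w^2\eta_0$ type equations (with the precise coefficients dictated by the limits in \eqref{thm1.2:M}), plus constraint-induced terms. (3) Use the non-degeneracy hypothesis \eqref{1:H} on $H_1$ together with the known non-degeneracy of $w$ (the kernel of $\mathcal L_1$ in $H^1(\R^2)$ being spanned by $\partial_{x_1}w,\partial_{x_2}w$) to conclude $\xi_0=\sum_j d_j\partial_{x_j}w$ for constants $d_j$, and then that $d_j=0$: this is where the non-degenerate critical point condition on $H_1(y_0)$ enters, forcing the translation modes to vanish via a Pohozaev-type identity obtained by multiplying the difference of the two PDE systems by suitable cutoffs of $\partial_{x_j}u_{1k}$ and integrating. (4) Conclude $\xi_0\equiv0$ and, handling $\eta_0$ by the corresponding (nondegenerate, because $\frac{\beta_*-a^*}{a^*-b}\cdot\frac{1}{?}$ places us strictly below the threshold) linearized operator — or by directly using that $u_{2k}$ and $v_{2k}$ are both forced by \eqref{thm1.2:M} to have the same rescaled profile $w$ — that $\eta_0\equiv0$ as well. (5) Finally, recover a contradiction with $\|\xi_k\|_*=1$: this requires a local Pohozaev identity localized near $x_{1k}$ that pins down the maximum points and shows the difference cannot carry unit mass — exactly the mechanism used for single-component problems in the references \cite{GS,GLWZ} but now carried out for the coupled system, with the extra case $p_1=p_2$ requiring the hypothesis $H_2(y_0)\neq H_1(y_0)$ to break the symmetry between the two blow-up locations.

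I expect the main obstacle to be step (3)--(5): establishing that the normalized difference $(\xi_k,\eta_k)$ cannot converge to a nonzero limit \emph{and} cannot lose mass at infinity, i.e., proving the two local Pohozaev identities (one for the translation/location modes using non-degeneracy of $y_0$, one for the ``height'' mode) with enough precision that the error terms are genuinely lower order than $\|u_{1k}-v_{1k}\|_*$. The coupling term $\beta_k u_{2k}^2u_1$ in \eqref{equ:CGPS} contributes to both Pohozaev identities, and because $\beta_k\to a^*$ while $u_{2k}$ is $O\!\big(\sqrt{(\beta_k-a^*)/(a^*-b)}\big)$ in amplitude, one must track how this interaction term scales against $\varepsilon_k$ and against the difference norm; getting these competing small parameters to line up — so that the coupling neither dominates nor is negligible at the wrong order — is the delicate bookkeeping that makes the coupled case strictly harder than the scalar results it generalizes. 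The condition $a^*-a=o(\beta-a^*)$ in \eqref{5:beta} should be precisely what makes this bookkeeping close, as it guarantees the second component's contribution enters at a controlled subleading order relative to the dominant $(a^*-a_k)(a^*-b)-(\beta_k-a^*)^2$ scale appearing in $\varepsilon_k$.
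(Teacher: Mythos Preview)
Your broad contradiction strategy and the anticipated role of Pohozaev identities plus the non-degeneracy of $y_0$ are correct, but there is a genuine gap in your linearization analysis.

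The limiting linearized system is \emph{not} about $(w,0)$. After the correct normalization $\bar v_{i,k}=v_{i,k}/(C_\infty\sigma_k)$ with $\sigma_k=\|v_{2,k}\|_\infty$, both components converge to $w$ (see \eqref{uniq:a-3}), and the relevant linearized system is \eqref{uniq:limit-2}--\eqref{uniq:limit-A1}. The second equation has homogeneous part $-\Delta\xi_{20}+(1-w^2)\xi_{20}$, and since $\beta_k\to a^*$ the coefficient in front of $w^2$ is exactly $1$, not $\tfrac{\beta_*-a^*}{a^*-b}$ as you wrote. The operator $-\Delta+(1-w^2)$ annihilates $w$ itself, so the kernel description \eqref{uniq:limit-A3} carries, in addition to the translation modes $b_1,b_2$ and the dilation mode $c_0$, an extra mode $b_0(0,w)^T$. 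Your assertion that the second linearized operator is ``nondegenerate \ldots strictly below the threshold'' is therefore false.

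Killing this $b_0$ mode is the crux and is where your outline stops short. The Pohozaev identities you sketch do yield $c_0=b_1=b_2=0$, hence $\xi_{10}\equiv 0$ and $\xi_{20}=b_0w$; but to force $b_0=0$ the paper goes one order deeper: it derives a refined expansion
\[
\xi_{1,k}=\Big[-b_0w+\sum_{i=1}^2 b_{1i}\,\partial_{x_i}w\Big](a^*C_\infty^2\sigma_k^2\varepsilon_k^2)+o(\sigma_k^2\varepsilon_k^2)
\]
(Step~4 in Section~4), then feeds this back into the Pohozaev identities \eqref{5.2:10} and \eqref{5.3:AA3} to obtain a $3\times 3$ linear system in $(b_0,b_{11},b_{12})$. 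The hypothesis $H_2(y_0)\neq H_1(y_0)$ when $p_1=p_2$ enters precisely \emph{here} --- in the determinant computation after \eqref{5.3:D2} --- to make that system nonsingular, not at the translation-mode step as you suggest. You should also note the paper's unusual mixed normalization $\|\bar u_{2,k}-\bar u_{1,k}\|_{L^\infty}+\|\bar v_{2,k}-\bar v_{1,k}\|_{L^2}$ in \eqref{uniq:a-7}; this is chosen specifically so that the $b_0w$ mode in the second component survives the normalization and can then be eliminated by the second-order argument.
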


Even though the uniqueness of nonnegative minimizers for $e(a, b, \beta)$ is also tackled in \cite[Theorem 1.5]{GLWZ}, there are some essential differences in the proof of Theorem \ref{thm1.3}. To prove Theorem \ref{thm1.3}, by contradiction we suppose $(u_{1,k}, v_{1,k})$ and $(u_{2,k}, v_{2,k})$ to be two different nonnegative minimizers of $e(a_k, b, \beta_k)$. The proof of Theorem \ref{thm:1.1} then motivates us to define
\begin{equation}\label{K:uniq:a-2}
\begin{split}
\bar u_{i,k}(x)=\sqrt{a^*} \varepsilon_{k}  u_{i,k}(\varepsilon_{k}x+x_{2,k})\,\  \text{and}\,\ v_{i,k}(\varepsilon_{k}x+x_{2,k})=C_\infty \sigma _k\bar v_{i,k}(x), \\
\text{where}\,\ i=1, 2,\,\  \sigma _k=\|v_{2,k}\|_\infty>0 \,\  \text{and}\,\ C_\infty=\frac{1}{\|w\|_\infty}> 0.\quad\quad\quad\quad\quad
\end{split}
\end{equation}
Here $\varepsilon_{k}>0$ is given by Proposition \ref{Prop:a1beta.3}, and $x_{2,k}$ is the unique maximum point of $u_{2,k}$.
Different from \cite[Theorem 1.5]{GLWZ}, we then need to consider the following difference function
\begin{equation}\label{K:uniq:a-7}\arraycolsep=1.5pt
\begin{array}{lll}
\xi_{1,k}(x)&=\displaystyle\frac{ \bar  u_{2,k}(x)-   \bar u_{1,k}(x)}{\|  \bar u_{2,k}-  \bar  u_{1,k}\|_{L^\infty(\R^2)}+\| \bar  v_{2,k}-  \bar v_{1,k}\|_{L^2(\R^2)}},\\[4mm]
\xi_{2,k}(x)&=\displaystyle\frac{ \bar  v_{2,k}(x)-  \bar  v_{1,k}(x)}{\| \bar  u_{2,k}-   \bar u_{1,k}\|_{L^\infty(\R^2)}+\| \bar  v_{2,k}- \bar  v_{1,k}\|_{L^2(\R^2)}}.
\end{array}
\end{equation}
By using more delicate analysis, the limit behavior $(\xi_{10}, \xi_{20})$ of $(\xi_{1,k}, \xi_{2,k})$ as $k\to\infty$ further turns out to satisfy the following non-degenerate system: as proved in (\ref{uniq:limit-A3}), the solution set of
\begin{equation} \label{K:Auniq:limit-2} \arraycolsep=1.5pt
\left\{\begin{array}{lll}
\qquad   \mathcal{L}_1\xi_{10}&:=&-\Delta \xi_{10}+\big[1-3w^2\big] \xi_{10} =\displaystyle \frac{2}{a^*} \Big( \inte u_0^3\xi_{10}\Big)w   \,\ \mbox{in}\,\  \R^2, \\ [2.5mm]
 \mathcal{L}_2(\xi_{10})\xi_{20}&:=&-\Delta \xi_{20}+(1- w^2 )\xi_{20} -2w^2\xi_{10} =\displaystyle \frac{2}{a^*} \Big( \inte u_0^3\xi_{10}\Big)w   \,\ \mbox{in}\,\  \R^2,
 \end{array}\right.\end{equation}
satisfies
\begin{equation}\label{K:Auniq:limit-3}
\left(\begin{array}{cc} \xi_{10}\\[2mm]
 \xi_{20}\end{array}\right)=b_0\left(\begin{array}{cc} 0\\[2mm]
w\end{array}\right)+\sum _{j=1}^{2}b_j\left(\begin{array}{cc} \frac{\partial w}{\partial x_j}\\[2mm] \frac{\partial w}{\partial x_j}
\end{array}\right)+c_0 \left(\begin{array}{cc}  w+x\cdot \nabla w \\[2mm]
w+x\cdot \nabla w \end{array}\right)
\end{equation}
for some constants $c_0$ and $b_j$ with $j=0, 1, 2$, which is more involved than those in \cite{GLW,GLWZ}. By deriving local Pohozaev identities (cf. \cite{Cao,Deng,Grossi,GLW}), we shall first prove that $c_0=0$, based on which we shall derive that $b_1=b_2=0$ in (\ref{K:Auniq:limit-3}). Following these, we shall prove that $\xi_{1,k}(x)\to \xi_{10}\equiv 0$ uniformly in $\R^2$ as $k\to\infty$. To reach a contradiction by further showing $b_0=0$, one then needs to derive a refined expansion of $\xi_{1,k} $ in terms of $\sigma _k$ and $\eps_k$.

When $H_1(y)$ has $N$ non-degenerate critical points, it was proved in \cite{Grossi} that the number of single peak solutions for some scalar equations equals exactly to $N$, where $N\ge 1$.  Our results show that the uniqueness of Theorem \ref{thm1.3} is true for the case where $N=1$, and it seems more complicated for the general case where $N>1$.

This paper is organized as follows. The main purpose of Section 2 is to establish Theorem \ref{Th:b1}. In Section 3 we shall first establish Proposition \ref{Prop:a1beta.3}, based on which we then finish the proof of Theorems \ref{thm:1.1} and \ref{thm:1.2} in Subsection 3.1. Following Proposition \ref{Prop:a1beta.3}, in Section 4 we shall complete the proof of Theorem \ref{thm1.3}. The proofs of Lemma \ref{lem4.3} and (\ref{5.3:exponent}) are given in Appendix A.

 \section{Limit Behavior of Nonnegative Minimizers: $0< \beta < a^*$  }

In this section, we mainly establish the following Theorem \ref{Th:b1} on the semi-trivial limit behavior of nonnegative minimizers for $e(a,b,\beta)$.

\begin{theorem}\label{Th:b1}
Suppose $V_i(x)\in C^2(\R^2)$ is homogeneous of degree $p_i\ge 2$ and satisfies \eqref{cond:V.1} and (\ref{cond:Vcritical.u1}) for $i=1$ and $2$. Let $(u_{1k},u_{2k})$ be a nonnegative minimizer of $e(a_k,b,\beta_k)$, where $0<b<a^*$,  $a_k\nearrow a^*$ and $0<\beta_k<a^*$ satisfies $\beta_k\to\beta _*\in (0, a^*)$ as $k\to\infty$. Then, up to a subsequence if necessary, we have
\begin{equation}\label{lim:a1.u.rate}
\begin{cases}
   \lim\limits_{k\to\infty}\sqrt{a^*} \varepsilon_k  u_{1k}(\varepsilon_kx+x_{1k})=w(x)\,\, \text{uniformly in $\R^2$,}\\
   u_{2k}(x)\equiv0\,\,\,\text{in $\R^2$, when $k$ is large enough},
\end{cases}
\end{equation}
where
\begin{equation}\label{def:a1.eps}
\varepsilon_k :=\frac{1}{\lambda_1}(a^*-a_k)^\frac{1}{p_1+2}>0,\,\,\
\lambda_1:=\Big[\frac{p_1}{2}H_1(y_0 )\Big]^\frac{1}{2+p_1},
\end{equation}
and the point $x_{1k}$ is the unique maximum point of $u_{1k}$ satisfying
\begin{equation}\label{lim:a1.z.rate}
  \lim\limits_{k\to\infty}\frac{x_{1k}}{\varepsilon_k} =y_0
\end{equation}
for $y_0\in\R^2$ given in (\ref{cond:Vcritical.u1}).
\end{theorem}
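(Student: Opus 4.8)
The plan is to first pin down the sharp energy expansion of $e(a_k,b,\beta_k)$ and the blow-up profile of $u_{1k}$ (together with the \emph{rescaled} vanishing of $u_{2k}$), and then to upgrade the latter to the pointwise identity $u_{2k}\equiv0$ for large $k$ via a contradiction argument resting on the non-existence result \cite[Lemma 4.1]{Wei96}.

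\emph{Step 1: energy expansion.} First prove $e(a_k,b,\beta_k)\to0$, of order $(a^*-a_k)^{p_1/(p_1+2)}$. Testing with the semi-trivial pair $\big(\tfrac{1}{\sqrt{a^*}\,\tau}\,w\big(\tfrac{\cdot-y}{\tau}\big),0\big)$ and using \eqref{ide:w} and the homogeneity of $V_1$ gives exactly
\[
E_{a_k,b,\beta_k}\Big(\tfrac{1}{\sqrt{a^*}\,\tau}\,w\big(\tfrac{\cdot-y}{\tau}\big),0\Big)=\frac{a^*-a_k}{a^*\tau^2}+\frac{\tau^{p_1}}{a^*}\,H_1\Big(\frac{y}{\tau}\Big),
\]
and minimizing over $\tau>0$ and the center $y$ (recall \eqref{cond:Vcritical.u1}) yields the upper bound and the value of $\varepsilon_k$ in \eqref{def:a1.eps}. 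For the matching lower bound, combine the vector Gagliardo--Nirenberg inequality \eqref{ineq:GN} with the elementary pointwise bound $\tfrac{a_k}{2}u_{1k}^4+\tfrac{b}{2}u_{2k}^4+\beta_ku_{1k}^2u_{2k}^2\le\tfrac{a_k}{2}\big(u_{1k}^2+u_{2k}^2\big)^2$, valid for large $k$ since then $b<a_k$ and $\beta_k<a_k$; this gives $E_{a_k,b,\beta_k}(u_{1k},u_{2k})\ge\big(1-\tfrac{a_k}{a^*}\big)\int_{\R^2}\big(|\nabla u_{1k}|^2+|\nabla u_{2k}|^2\big)+\int_{\R^2}\big(V_1u_{1k}^2+V_2u_{2k}^2\big)\ge0$, so that together with the upper bound $\int_{\R^2}(|\nabla u_{1k}|^2+|\nabla u_{2k}|^2)\to\infty$ at the rate $\varepsilon_k^{-2}$, and a compactness argument using that each $V_i$ vanishes on a null set shows the blow-up actually occurs.

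\emph{Step 2: blow-up profile and rescaled vanishing of $u_{2k}$.} Let $x_{1k}$ be a maximum point of $u_{1k}$, $\varepsilon_k$ the concentration scale (which the analysis identifies with \eqref{def:a1.eps}), and set $\tilde u_{ik}(x):=\sqrt{a^*}\,\varepsilon_k u_{ik}(\varepsilon_kx+x_{1k})$, $\tilde\rho_k:=(\tilde u_{1k}^2+\tilde u_{2k}^2)^{1/2}$, so $\int_{\R^2}\tilde\rho_k^2=a^*$ and, by the diamagnetic inequality, $\int_{\R^2}|\nabla\tilde\rho_k|^2\le\int_{\R^2}(|\nabla\tilde u_{1k}|^2+|\nabla\tilde u_{2k}|^2)$. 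Step 1 shows $\tilde\rho_k$ is a minimizing sequence for \eqref{ineq:GNQ} (indeed $\int_{\R^2}\tilde\rho_k^4\to2a^*$ and $\int_{\R^2}(|\nabla\tilde u_{1k}|^2+|\nabla\tilde u_{2k}|^2)\to a^*$); a standard concentration--compactness argument (cf. \cite{GLWZ}) --- vanishing excluded since $\int_{\R^2}\tilde\rho_k^4\to2a^*>0$, dichotomy by strict subadditivity --- gives, up to a subsequence and a translation absorbed in the choice of $x_{1k}$, $\tilde\rho_k\to w$ in $H^1(\R^2)$, hence $\tilde u_{ik}\to\tilde u_{i0}$ in $H^1(\R^2)$ with $\tilde u_{10}^2+\tilde u_{20}^2=w^2$. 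Equality in the diamagnetic inequality forces $\tilde u_{10}=\alpha w$ and $\tilde u_{20}=\gamma w$ with $\alpha^2+\gamma^2=1$, $\alpha,\gamma\ge0$, and passing to the limit in the energy identity, using $b<a^*$ and $\beta_*<a^*$, forces $\gamma=0$; thus $\tilde u_{1k}\to w$ and $\tilde u_{2k}\to0$. Elliptic estimates upgrade this to $C^2_{\mathrm{loc}}$ convergence, a maximum-principle barrier on the equation of $u_{2k}$ makes $\tilde u_{2k}\to0$ uniformly on all of $\R^2$ (in particular $\varepsilon_k\|u_{2k}\|_{L^\infty}\to0$), the non-degeneracy of $w$ gives the uniqueness of $x_{1k}$ for large $k$, and inserting $\tilde u_{1k}\to w$ into the expansion $\int_{\R^2}V_1u_{1k}^2=\tfrac{\varepsilon_k^{p_1}}{a^*}H_1\big(\tfrac{x_{1k}}{\varepsilon_k}\big)(1+o(1))$ together with \eqref{cond:Vcritical.u1} yields \eqref{lim:a1.z.rate} and the asymptotics of $\varepsilon_k$.

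\emph{Step 3: $u_{2k}\equiv0$; the main obstacle.} Suppose by contradiction $u_{2k}\not\equiv0$ along a subsequence. Then $(u_{1k},u_{2k})$ solves \eqref{equ:CGPS} with some Lagrange multiplier $\mu_k$, $u_{2k}>0$ by the strong maximum principle, and testing the system against $(u_{1k},u_{2k})$ with Step 1 gives $\varepsilon_k^2\mu_k\to-1$. Evaluating the $u_{2k}$-equation at a maximum point of $u_{2k}$, together with $\varepsilon_k\|u_{2k}\|_{L^\infty}\to0$ and the concentration of $u_{1k}$, shows that this maximum lies within distance $O(\varepsilon_k)$ of $x_{1k}$; hence, with $\sigma_k:=\|u_{2k}\|_{L^\infty}$, the function $\bar u_{2k}(x):=\sigma_k^{-1}u_{2k}(\varepsilon_kx+x_{1k})$ is bounded, attains the value $1$ at a bounded point, and satisfies
\[
-\Delta\bar u_{2k}+\Big[\varepsilon_k^2V_2(\varepsilon_kx+x_{1k})-\varepsilon_k^2\mu_k-\beta_k\varepsilon_k^2u_{1k}^2(\varepsilon_kx+x_{1k})\Big]\bar u_{2k}=b\,\varepsilon_k^2\sigma_k^2\,\bar u_{2k}^3 .
\]
Since $\varepsilon_k^2V_2(\varepsilon_kx+x_{1k})\to0$, $-\varepsilon_k^2\mu_k\to1$, $\beta_k\varepsilon_k^2u_{1k}^2(\varepsilon_kx+x_{1k})\to\tfrac{\beta_*}{a^*}w^2$ and $\varepsilon_k^2\sigma_k^2\to0$, elliptic estimates give $\bar u_{2k}\to\bar u_{20}\ge0$, $\bar u_{20}\not\equiv0$, solving $\Delta\bar u_{20}-\bar u_{20}+\tfrac{\beta_*}{a^*}w^2\bar u_{20}=0$ in $\R^2$; since $0<\tfrac{\beta_*}{a^*}<1$, this contradicts \cite[Lemma 4.1]{Wei96}. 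Hence $u_{2k}\equiv0$ for all large $k$, $(u_{1k},0)$ is then a minimizer of the single-component functional, and Steps 1--2 apply along the full sequence, giving \eqref{lim:a1.u.rate}--\eqref{lim:a1.z.rate}. The hard part is exactly this step: one must simultaneously control $\varepsilon_k^2\mu_k\to-1$, prove that $u_{2k}$ concentrates at the \emph{same} point $x_{1k}$ and \emph{same} scale $\varepsilon_k$ as $u_{1k}$ (so $\bar u_{20}$ is non-trivial), and guarantee $\varepsilon_k\|u_{2k}\|_{L^\infty}\to0$ so the cubic term disappears and the limit equation is exactly $\Delta v-v+\tfrac{\beta_*}{a^*}w^2v=0$ rather than a perturbed one; all three rely on the uniform (not merely local) decay of $\tilde u_{2k}$ established in Step 2.
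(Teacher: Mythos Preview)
Your proof is correct and, at its heart (Step 3), is exactly the paper's argument: assuming $u_{2k}\not\equiv0$, rescale by $\sigma_k=\|u_{2k}\|_\infty$, use the Euler--Lagrange system together with $\varepsilon_k^2\mu_k\to-1$ and the decay of $u_{1k}$ to localize the maximum of $u_{2k}$ at distance $O(\varepsilon_k)$ from $x_{1k}$, pass to the limit in $C^2_{\mathrm{loc}}$, and obtain a nontrivial nonnegative solution of $-\Delta v+v=\tfrac{\beta_*}{a^*}w^2v$, contradicting \cite[Lemma~4.1]{Wei96}. The only cosmetic difference is that the paper centers the rescaled $\bar u_{2k}$ at the maximum point $y_k$ of $u_{2k}$ rather than at $x_{1k}$; after the bounded shift this is the same.

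Where you diverge from the paper is in Step~2. You go through the density $\tilde\rho_k=(\tilde u_{1k}^2+\tilde u_{2k}^2)^{1/2}$, the diamagnetic inequality, and concentration--compactness to arrive at $\tilde u_{i0}=\alpha_iw$, and then invoke an ``energy identity'' to force $\gamma=0$. The paper bypasses all of this: rewriting the energy as in \eqref{exp2:ea} makes every summand nonnegative (since $a_k,b,\beta_k<a^*$), so $e(a_k,b,\beta_k)\to0$ immediately gives $\int u_{2k}^4\to0$, $\int u_{1k}^2u_{2k}^2\to0$, and then the scalar Gagliardo--Nirenberg inequality yields $\|u_{2k}\|_{H^1}\to0$ directly. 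Your ``energy identity'' step is in fact precisely this observation, so your detour through $\tilde\rho_k$ is unnecessary (though not incorrect). One small point to tighten: as written you first fix $\varepsilon_k$ as in \eqref{def:a1.eps} and then claim $\int|\nabla\tilde u_{ik}|^2\to a^*$, but that limit only follows \emph{after} the sharp energy matching; the paper avoids this circularity by first working with the intrinsic scale $\bar\varepsilon_k=(\int|\nabla u_{1k}|^2)^{-1/2}$ and then identifying $\bar\varepsilon_k/\varepsilon_k\to1$ once the upper and lower energy bounds coincide.
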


In order to prove Theorem \ref{Th:b1},  let $(u_{1k},u_{2k})$ be a nonnegative minimizer of $e(a_k,b,\beta _k)$ in view of Theorem A, so that the expression of $e(a_k,b,\beta _k)$ can be rewritten as
\begin{equation}\label{exp2:ea}
\begin{split}
  &e(a_k,b,\beta _k) =E_{a_k,b,\beta _k}(u_{1k},u_{2k})\\
  =&\int_{\R^2} \big(|\nabla u_{1k}(x)|^2 + |\nabla u_{2k}(x)|^2\big)\dx - \frac{a^*}{2}\int_{\R^2}(|u_{1k}(x)|^2+|u_{2k}(x)|^2)^2\dx \\
  & + \int_{\R^2} V_1(x)|u_{1k}(x)|^2\dx + \int_{\R^2} V_2(x)|u_{2k}(x)|^2\dx \\
  & + \frac{a^*-a_k}{2}\int_{\R^2}|u_{1k}(x)|^4\dx+\frac{a^*-b}{2}\int_{\R^2}|u_{2k}(x)|^4\dx \\
  & + (a^* - \beta _k) \int_{\R^2} |u_{1k}(x)|^2|u_{2k}(x)|^2\dx.
\end{split}
\end{equation}
Applying \cite[Theorem 1.2]{GLWZ}, we have
$e(a_k,b,\beta _k)\to0$ as $k\to\infty$ by choosing $k$ large enough that $a^*> \beta _k$.
It then follows from (\ref{ineq:GN}) and \eqref{exp2:ea} that
\begin{equation}\label{lim.b1.V}
  \lim\limits_{k\to\infty}\int_{\R^2}V_1(x)u_{1k}^2\dx=
  \lim\limits_{k\to\infty}\int_{\R^2}V_2(x)u_{2k}^2\dx=0,
\end{equation}
and
\begin{equation}\label{lim.b1.multi}
  \lim\limits_{k\to\infty}\int_{\R^2}u_{2k}^4\dx=\lim\limits_{k\to\infty}\int_{\R^2}u_{1k}^2u_{2k}^2\dx=0.
\end{equation}
Recall that
\begin{equation}\label{exp1:ea}
\begin{split}
  e(a_k,b,\beta _k) =
         &\int_{\R^2} |\nabla u_{1k}|^2\dx -\frac{a_k}{2}\int_{\R^2} |u_{1k}|^4\dx + \int_{\R^2} V_1(x)|u_{1k}|^2\dx \\
         &+\int_{\R^2} |\nabla u_{2k}|^2\dx -\frac{b}{2}\int_{\R^2} |u_{2k}|^4\dx + \int_{\R^2} V_2(x)|u_{2k}|^2\dx \\
         &-\beta _k \int_{\R^2} |u_{1k}|^2|u_{2k}|^2\dx.
\end{split}
\end{equation}
Hence, using the Gagliardo-Nirenberg inequality \eqref{ineq:GNQ}, one can deduce from (\ref{lim.b1.V})-\eqref{exp1:ea} that
\begin{equation*}\label{lim:b1.nabla1}
 \lim\limits_{k\to\infty}\int_{\R^2}|\nabla u_{2k}|^2\dx =0\,\,\,\text{and}\,\,\, \lim\limits_{k\to\infty}\Big(1-\frac{a_k}{a^*}\|u_{1k}\|_2^2\Big)\int_{\R^2}|\nabla u_{1k}|^2\dx =0.
\end{equation*}

On the other hand, the argument of proving \cite[Lemma 3.1(1)]{GLWZ} gives that $\int_{\R^2}(|\nabla u_{1k}|^2+|\nabla u_{2k}|^2)\dx\to+\infty$ as $k\to\infty$. Together with the fact $\int_{\R^2}(| u_{1k}|^2+| u_{2k}|^2)\dx=1$, one can derive from the above two equalities that
\begin{equation}\label{lim:b1.kinetic}
   \lim\limits_{k\to\infty}\int_{\R^2}|\nabla u_{1k}|^2\dx=+\infty, \,\
   \lim\limits_{k\to\infty}\int_{\R^2}|u_{1k}|^2\dx=1,
\end{equation}
and
\begin{equation}\label{lim:b1.L2}
   \lim\limits_{k\to\infty}\int_{\R^2}|\nabla u_{2k}|^2\dx=\lim\limits_{k\to\infty}\int_{\R^2}| u_{2k}|^2\dx=0.
\end{equation}
Moreover, using \eqref{lim.b1.V}-\eqref{lim:b1.L2}, we deduce  from \eqref{exp1:ea} that
\begin{equation}\label{lim.b1.GN}
\lim\limits_{k\to\infty}\frac{\int_{\R^2}|\nabla u_{1k}|^2\dx}
                                  {\int_{\R^2}| u_{1k}|^4\dx}=\frac{a^*}{2}.
\end{equation}
Following above estimates, we now address the proof of Theorem \ref{Th:b1}.

\begin{proof}[\bf Proof of Theorem \ref{Th:b1}.]
Setting $\bar{\varepsilon}_{k}:=\big(\int_{\R^2}|\nabla u_{1k}(x)|^2\dx\big)^{-\frac{1}{2}}>0$, it then follows from \eqref{lim:b1.kinetic} that  $\bar{\varepsilon}_{k}\to0$ as $k\to\infty$. Denote
\begin{equation}\label{eq2.6}
\bar{w}_{ik}(x):=\bar{\varepsilon}_{k} u_{ik}(\bar{\varepsilon}_{k}x+x_{1k}), \ i=1,2,\end{equation}
where $x_{1k}$ is a global   maximum point of $u_{1k}$. Since $(u_{1k},u_{2k})$ satisfies the system \eqref{equ:CGPS}, $(\bar w_{1k},\bar w_{2k})$ satisfies
\begin{equation}\label{eq2.7}
\begin{cases}
 -\Delta \bar w_{1k} +\bar \varepsilon_k^2V_1(\bar \varepsilon_k x+x_{1k})\bar w_{1k}
 =\bar \varepsilon_k^2\mu_{k} \bar w_{1k} +a_k\bar w_{1k} ^3+\beta _k \bar w_{2k} ^2\bar w_{1k}    \,\, \mbox{in}\,\,  \R^2,\\
 -\Delta \bar w_{2k} +\bar \varepsilon_k^2V_2(\bar \varepsilon_k x+x_{1k})\bar w_{2k}
 =\bar\varepsilon_k^2\mu_{k} \bar w_{2k} +b\bar w_{2k} ^3+\beta _k \bar w_{1k} ^2\bar w_{2k} \ \   \,\, \mbox{in}\,\,  \R^2,
\end{cases}
\end{equation}
where $\mu_k\in\R$ is a suitable Lagrange multiplier.
Note from \eqref{lim:b1.kinetic} and \eqref{lim:b1.L2} that for any sequence $\{a_k\}$ with $a_k\nearrow a^*$ as $k\to\infty$, $\bar w_{1k}$ is bounded uniformly in $H^1(\R^2)$ and $\bar w_{2k}\to 0$ in $H^1(\R^2)$ as  $k\to\infty$.  By the argument of proving (4.6) and (4.7) in \cite{GLWZ}, one can also obtain that $\bar w_{ik}$ and $\nabla \bar w_{ik}$ decay exponentially as $|x|\to\infty$ for $i=1, 2.$
Using the standard elliptic regularity theory, one can further derive from (\ref{eq2.7})  that
 \begin{equation}\label{eq2.9}
 \bar w_{2k}(x)\overset{k}\to0\text{ in }L^{\infty}(\R^2). \end{equation}
Therefore, the system (\ref{eq2.7}) must degenerate into a single equation of the form
\begin{equation}\label{equ:a1.u1}
-\Delta \bar w_{1k} +\bar \varepsilon_k^2V_1(\bar \varepsilon_k x+x_{1k})\bar w_{1k}
 =\big(\bar \varepsilon_k^2\mu_{k} +o(1)\big)\bar w_{1k} +a_k\bar w_{1k} ^3   \,\ \mbox{in}\,\  \R^2
\end{equation}
as $k \to\infty$.
Following the proof of \cite[Theorem 1.2]{GWZZ} (see also  \cite[Theorem 1.3]{GLWZ}), one can conclude from \eqref{lim.b1.V} and \eqref{lim.b1.GN} that, passing to a subsequence if necessary, $\bar{w}_{1k}$ satisfies
\begin{equation}\label{lim:a1.w1}
\bar{w}_{1k}(x)
   \xrightarrow{k}\frac{w(x)}{\|w\|_2}
\,\,\,\, \text{strongly in} \,\,\,H^1(\R^2),
\end{equation}
and $x_{1k}$ is the unique  maximum point of $u_{1k}$.

In order to determine the  convergence rate $\bar{\varepsilon}_k>0$, motivated by \cite{GLW}-\cite{GZZ2}, we next analyze a refined estimate of the energy $e(a_k,b,\beta _k)$ as $k\to\infty$.
Specifically, here we claim that
\begin{equation}\label{lim:a.e.rate}
\lim\limits_{k\to\infty}\frac{e(a_k,b,\beta _k)}{(a^*-a_k)^\frac{p_1}{p_1+2}}
=\frac{\lambda_1^2}{a^*}\frac{p_1+2}{p_1}.
\end{equation}
Actually,  by taking the following test function
\begin{equation*}\label{def:b.trial}
  \phi_1(x)=\frac{\tau}{\| w\|_2} w(\tau x-y_0 ) \ \ \text { and }\ \
  \phi_2(x)=0,
\end{equation*}
where $\tau=\lambda_1(a^*-a_k)^\frac{-1}{p_1+2}>0$, $\lambda_1>0$ is defined in \eqref{def:a1.eps} and $y_0$ is the unique  critical point of $H_1(y):=\int_{\R^2}V_1(x+y)w^2(x)\dx$,
the calculations yield  the following upper bound
\begin{equation}\label{psup:b1.e}
e(a_k,b,\beta _k)\leq E_{a_k,b,\beta _k}(\phi_1,\phi_2)
  =\frac{\lambda_1^2}{a^*}\frac{p_1+2}{p_1}(a^*-a_k)^\frac{p_1}{p_1+2}\ \ \text{as} \ \ k\to\infty.
\end{equation}
On the other hand, let $(u_{1k},u_{2k})$ be a nonnegative minimizer of $e(a_k,b,\beta _k)$ as $k\to\infty$.
It follows from (\ref{exp2:ea}) and \eqref{lim.b1.GN} that
\begin{equation*}\label{estsub:b1.e}
\begin{split}
  e(a_k,b,\beta _k)
  \geq& \frac{a^*-a_k}{2}\int_{\R^2}| u_{1k}|^4\dx + \int_{\R^2} V_1(x)|u_{1k}|^2\dx\\
  =&\frac{a^*-a_k}{a^*}(\bar{\varepsilon}_k)^{-2} + (\bar{\varepsilon}_k)^{p_1}\int_{\R^2} V_1\Big(x+\frac{x_{1k}}{\bar{\varepsilon}_k}\Big)\bar{w}_{1k}^2\dx.
\end{split}
\end{equation*}
By the argument of proving (3.35) in \cite{GLWZ}, it then yields from  above that
\begin{equation}\label{estsub:a1.e}
\begin{split}
\liminf\limits_{k\to\infty}\frac{e(a_k,b,\beta _k)}{(a^*-a_k)^\frac{p_1}{p_1+2}}
\geq\frac{\lambda_1^2}{a^*} \frac{p_1+2}{p_1},
\end{split}
\end{equation}
where the equality holds if and only if
\begin{equation}\label{estsub:a1.e.y}
\lim\limits_{k\to\infty}\frac{x_{1k}}{\bar{\varepsilon}_k}=y_0,\,\,\,\text{where $y_0\in\R^2$ is defined in \eqref{cond:Vcritical.u1}},
\end{equation}
and
\begin{equation}\label{estsub:a1.eps}
\lim_{k\to\infty}\bar{\varepsilon}_{k}/\varepsilon_k =1, \,\,\,\text{where $\varepsilon_k=\frac{1}{\lambda_1}(a^*-a_k)^\frac{1}{p_1+2}>0$ is defined in \eqref{def:a1.eps}}.
\end{equation}
Therefore, we conclude  \eqref{lim:a.e.rate} from \eqref{psup:b1.e} and \eqref{estsub:a1.e}.

The above proof of \eqref{lim:a.e.rate}  implies that the equality of \eqref{estsub:a1.e} holds true. This further implies that both \eqref{estsub:a1.e.y} and \eqref{estsub:a1.eps} are true, and therefore \eqref{lim:a1.z.rate} follows.
Furthermore, we obtain from \eqref{lim:a1.w1} and \eqref{estsub:a1.eps} that
\begin{equation*}
   \lim\limits_{k\to\infty}\sqrt{a^*} \varepsilon_k  u_{1k}(\varepsilon_kx+x_{1k})=w(x)\,\, \text{strongly in} \,\,H^1(\R^2).
\end{equation*}
Since we have as before that $w(x)$ and $\bar{w}_{1k}$ decay exponentially as $|x|\to\infty$, the standard elliptic regularity theory yields that the first limit of \eqref{lim:a1.u.rate} holds uniformly in $\R^2$ (see \cite[Lemma 4.9]{M} for similar arguments).

The rest is to prove that $u_{2k}(x)\equiv 0$ in $\R^2$ when $k>0$ is large enough.
On the contrary, suppose this is false.
Let $y_k$ be a global maximum point of $u_{2k}$, and set $\bar{u}_{2k}(x):=\frac{1}{\delta_k}u_{2k}(\varepsilon_kx+y_k)$, where $\delta_k:=\|u_{2k}\|_{\infty}$  and $\varepsilon_k>0$ is given in \eqref{def:a1.eps}. Then $\delta_k>0$ and $\bar{u}_{2k}(x)$ satisfies
\begin{equation}\label{equ:a1.w2}
\begin{split}
  &-\Delta \bar{u}_{2k}+\varepsilon_k^2V_2(\varepsilon_kx+y_k)\bar{u}_{2k}\\
  =&\mu_k\varepsilon_k^2\bar{u}_{2k}+b\delta^2_k\varepsilon_k^2\bar{u}_{2k}^3+\beta _k  {w}_{1k}^2\Big(x+\frac{y_k-x_{1k}}{\varepsilon_k}\Big) \bar{u}_{2k}\,\, \text{in} \,\,\R^2,
\end{split}
\end{equation}
where ${w}_{1k}(x):=\varepsilon_k  u_{1k}(\varepsilon_kx+x_{1k})$  and $x_{1k}$ is the unique maximum point of $u_{1k}$.
Note from (\ref{eq2.6}), (\ref{eq2.9}) and \eqref{estsub:a1.eps} that
\begin{equation}\label{eq2.18}
\delta_k\eps_k\to0\,\,\,\text{as}\,\,\,k\to\infty.
\end{equation}
It also follows from \eqref{exp2:ea} and \eqref{equ:a1.u1} that
\begin{equation}\label{lim:a1.mu}
\varepsilon_k^2\mu_k=\varepsilon_k^2\big(e(a_k,b,\beta _k)-o(1)\big)-\frac{a_k}{2}\int_{\R^2}w_{1k}^4\dx\to-1\,\,\,\text{as}\,\,\,k\to\infty.
\end{equation}
Since the origin is a global maximum point of $\bar{u}_{2k}$ and $\bar{u}_{2k}(0)=\frac{u_{2k}(y_k)}{\|u_{2k}\|_{\infty}}=1$, we then derive from  \eqref{equ:a1.w2} that $ {w}_{1k}^2(\frac{y_k-x_{1k}}{\varepsilon_k})\geq \frac{1}{2\beta _*}$. Since ${w}_{1k}$ decays exponentially as $|x|\to\infty$, applying the maximum principle to (\ref{equ:a1.w2}) then gives that $\{\frac{y_k-x_{1k}}{\varepsilon_k}\}$ is bounded  uniformly in $k$, where (\ref{lim:a1.mu}) is also used.
Thus,  passing to a subsequence if necessary, one can get that
\begin{equation}\label{lim:a1.xy}
  \lim\limits_{k\to\infty}\frac{y_k-x_{1k}}{\varepsilon_k}=y^0\,\,\,\text{for some}\,\,\,y^0\in\R^2.
\end{equation}
Furthermore, the standard elliptic regularity implies that $\|\bar{u}_{2k}\|_{C^{2,\alpha}_{loc}{(\R^2)}}\leq C$ for some $\alpha\in(0,1)$, where the constant $C>0$ is independent of $k$.
Then there exist a subsequence of $\{\bar{u}_{2k}\}$ (still denoted by $\{\bar{u}_{2k}\}$) and some $\bar{u}_{20}\in C^2_{loc}{(\R^2)}$ such that $\bar{u}_{2k}\to \bar{u}_{20}$ in $C^2_{loc}{(\R^2)}$ as $k\to\infty$. Especially, we have
\begin{equation}\label{lim:a1.u2.y}
\bar{u}_{20}(y^0)=\lim\limits_{k\to\infty}\bar{u}_{2k}\Big(\frac{y_k-x_{1k}}{\varepsilon_k}\Big)=1.
\end{equation}

On the other hand,
 one can derive from \eqref{lim:a1.w1} and (\ref{eq2.18})-\eqref{lim:a1.xy} that $\bar{u}_{20}$ satisfies
\begin{equation}\label{equ:2:22}
  -\Delta \bar{u}_{20}+\bar{u}_{20}-\frac{\beta _*}{a^*}w^2(x+y^0)\bar{u}_{20} =0\,\, \text{in} \,\,\R^2,
\end{equation}
where $0<\frac{\beta _*}{a^*}<1$ and $w$ is the unique positive solution of \eqref{equ:w}.
However, since it follows from  \cite[Lemma 4.1]{Wei96} that
$$\int_{\R^2}|\nabla u|^2\dx+\int_{\R^2}u^2\dx\geq\int_{\R^2}w^2u^2\dx \ \text{ for any }\  u\in H^1(\R^2),$$
we then reduce from (\ref{equ:2:22}) that
\begin{equation*}\label{equiv.a1.w20}
\bar{u}_{20}\equiv0 \ \text{ in }\,   \R^2,
\end{equation*}
which however contradicts to \eqref{lim:a1.u2.y}.
Therefore, we conclude that $u_{2k}(x)\equiv0$ in $\R^2$ when $k>0$ is large enough. This completes the proof of Theorem \ref{Th:b1}.
\end{proof}

\section{Limit Behavior of Nonnegative Minimizers: $a^*\le\beta <\beta ^*$}

In this section we shall prove Theorem \ref{thm:1.1} for the case where $(a_k, b, \beta_k)$ satisfies \eqref{con:a1beta.ab}. As a byproduct, we then complete the proof of Theorem \ref{thm:1.2}. We begin with the following lemma under the general assumption \eqref{cond:V.1}.

\begin{lemma}\label{lem:betaa1.u}
Suppose $V_1 (x)$ and $V_2 (x)$ satisfy \eqref{cond:V.1}.
Let $ (u_{1k},u_{2k})$ be a nonnegative minimizer of $e(a_k, b, \beta _k)$ satisfying
\begin{equation}\label{cond:-Vcritical} 0<b<a^*, \, \  a^*\leq \beta _k<\beta_k^*=a^*+\sqrt{(a^*-a_k)(a^*-b)},
\end{equation}
where $a_k\nearrow a^*$ as $k\to\infty$.
Then we have
\begin{itemize}
  \item [\rm(i).] $ (u_{1k},u_{2k})$ satisfies
\begin{equation}\label{lim.betaa1.V}
  \lim\limits_{k\to\infty}\int_{\R^2}V_1(x) u_{1k} ^2\dx=
  \lim\limits_{k\to\infty}\int_{\R^2}V_2(x) u_{2k} ^2\dx=0,
\end{equation}
\begin{equation}\label{lim:betaa1.u14}
   \lim\limits_{k\to\infty}\int_{\R^2}| u_{1k} |^4\dx=\infty.
\end{equation}

  \item [\rm(ii).] $ (u_{1k},u_{2k})$ also satisfies
  \begin{equation}\label{lim:betaa1.u2:u1.mult}
   \lim\limits_{k\to\infty}
   \frac{\int_{\R^2}|u_{2k} |^4\dx}{\int_{\R^2}|u_{1k} |^4\dx}=0,\,\,\,
   \lim\limits_{k\to\infty}
   \frac{\int_{\R^2}|u_{1k} |^2|u_{2k} |^2\dx}{\int_{\R^2}| u_{1k} |^4\dx}=0,
\end{equation}
\begin{equation}\label{lim:betaa1.u2}
   \lim\limits_{k\to\infty}
   \frac{\int_{\R^2}|\nabla u_{2k} |^2\dx}{\int_{\R^2}| u_{1k} |^4\dx}=
   \lim\limits_{k\to\infty}
   \int_{\R^2}| u_{2k} |^2\dx=0,
\end{equation}
and
\begin{equation}\label{lim:betaa1.u1}
   \lim\limits_{k\to\infty}
   \frac{\int_{\R^2}|\nabla u_{1k} |^2\dx}{\int_{\R^2}| u_{1k} |^4\dx}=\frac{a^*}{2}, \,\
\lim\limits_{k\to\infty}\int_{\R^2}| u_{1k} |^2\dx=1.
\end{equation}
\end{itemize}
\end{lemma}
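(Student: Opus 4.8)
The plan is to run an energy-estimate argument in the spirit of \cite[Section 3]{GLWZ}; the new feature is that here $\beta_k\ge a^*$, so the coupling term $(a^*-\beta_k)\int_{\R^2}|u_{1k}|^2|u_{2k}|^2\dx$ appearing after completing the square (as in \eqref{exp2:ea}) is now \emph{non-positive}, and one must check that the strict inequality $\beta_k<\beta_k^*$ and the fixed positive gap $a^*-b$ still force the quartic quantities to behave. Write $A_k=\int_{\R^2}|u_{1k}|^4\dx$, $B_k=\int_{\R^2}|u_{2k}|^4\dx$, $C_k=\int_{\R^2}|u_{1k}|^2|u_{2k}|^2\dx$, $m_{ik}=\int_{\R^2}|u_{ik}|^2\dx$, so that $m_{1k}+m_{2k}=1$ and $C_k\le\sqrt{A_kB_k}$.

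First I would record three facts. (a) $e(a_k,b,\beta_k)\to0$: the bound $e(a_k,b,\beta_k)\ge0$ follows from the expansion \eqref{exp2:ea}, because the ``kinetic $-\frac{a^*}{2}L^4$'' block is $\ge0$ by \eqref{ineq:GN} (total mass $1$), the potential block is $\ge0$, and the quartic block $\frac{a^*-a_k}{2}A_k+\frac{a^*-b}{2}B_k+(a^*-\beta_k)C_k$ is $\ge0$ since, by $C_k\le\sqrt{A_kB_k}$ and $(\beta_k-a^*)^2\le(a^*-a_k)(a^*-b)$, it dominates the positive semidefinite form $\frac{a^*-a_k}{2}s^2-(\beta_k-a^*)st+\frac{a^*-b}{2}t^2$ at $s=\sqrt{A_k}$, $t=\sqrt{B_k}$; the matching upper bound comes from a concentrating test function $(\phi_1,0)$, as in the proof of Theorem \ref{Th:b1} or \cite[Theorem 1.2]{GLWZ}. (b) $\int_{\R^2}(|\nabla u_{1k}|^2+|\nabla u_{2k}|^2)\dx\to\infty$, by the argument of \cite[Lemma 3.1(1)]{GLWZ}. (c) Applying \eqref{ineq:GNQ} to each component gives $\int_{\R^2}|\nabla u_{1k}|^2\dx\ge\frac{a^*}{2}A_k/m_{1k}$ and $\int_{\R^2}|\nabla u_{2k}|^2\dx\ge\frac{a^*}{2}B_k/m_{2k}$; since $a_k,b<a^*$ and $m_{ik}\le1$, in particular $\int_{\R^2}|\nabla u_{1k}|^2\dx-\frac{a_k}{2}A_k\ge0$ and $\int_{\R^2}|\nabla u_{2k}|^2\dx-\frac{b}{2}B_k\ge0$.

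Then the deductions. Since $e(a_k,b,\beta_k)$ is the sum of the three nonnegative blocks of \eqref{exp2:ea} and tends to $0$, each block tends to $0$: the potential block gives \eqref{lim.betaa1.V}, and the kinetic block together with (b) gives $A_k+B_k+2C_k=\int_{\R^2}(u_{1k}^2+u_{2k}^2)^2\dx\to\infty$. If $A_k$ were bounded along a subsequence, then $C_k\le\sqrt{A_kB_k}$ forces $B_k\to\infty$, and then the quartic block is $\ge\frac{a^*-b}{2}B_k-(\beta_k-a^*)\sqrt{A_kB_k}\to+\infty$ (as $A_k$ is bounded and $\beta_k\to a^*$), contradicting that it tends to $0$; hence $A_k\to\infty$, i.e. \eqref{lim:betaa1.u14}. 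The same device gives \eqref{lim:betaa1.u2:u1.mult}: if $B_k\ge cA_k$ along a subsequence for some $c>0$, then with $r_k=B_k/A_k\ge c$ the quartic block is $\ge A_k[\frac{a^*-a_k}{2}+\frac{a^*-b}{2}r_k-(\beta_k-a^*)\sqrt{r_k}]\ge\frac{(a^*-b)c}{4}A_k$ for $k$ large, which $\to\infty$ since $A_k\to\infty$, a contradiction; hence $B_k/A_k\to0$ and $C_k/A_k\le\sqrt{B_k/A_k}\to0$. Finally, dividing \eqref{exp1:ea} by $A_k$, using that the blocks $\int|\nabla u_{1k}|^2-\frac{a_k}{2}A_k+\int V_1u_{1k}^2$ and $\int|\nabla u_{2k}|^2-\frac{b}{2}B_k+\int V_2u_{2k}^2$ are $\ge0$ by (c), gives $\int_{\R^2}|\nabla u_{2k}|^2\dx/A_k\le e(a_k,b,\beta_k)/A_k+\frac{b}{2}B_k/A_k+\beta_kC_k/A_k\to0$, and from $\int|\nabla u_{1k}|^2-\frac{a_k}{2}A_k\le e(a_k,b,\beta_k)+\beta_kC_k$ together with the reverse bound $\int|\nabla u_{1k}|^2/A_k\ge\frac{a^*}{2m_{1k}}\ge\frac{a^*}{2}$ from (c), that $\int_{\R^2}|\nabla u_{1k}|^2\dx/A_k\to\frac{a^*}{2}$.

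It remains to prove $m_{2k}=\int_{\R^2}|u_{2k}|^2\dx\to0$ (equivalently $m_{1k}\to1$), which is the only place I expect real difficulty. Suppose $\liminf_k m_{1k}<1$, so $m_{1k}\le1-\delta$ along a subsequence for some $\delta>0$. As $t\mapsto(a^*-a_kt)/t$ is decreasing, (c) gives $\int|\nabla u_{1k}|^2-\frac{a_k}{2}A_k\ge\frac{A_k}{2}\cdot\frac{a^*-a_k(1-\delta)}{1-\delta}\ge\frac{a^*\delta}{4}A_k$ for $k$ large, while $\int|\nabla u_{2k}|^2-\frac{b}{2}B_k\ge0$ and $\beta_kC_k\le\beta_kA_k\sqrt{B_k/A_k}$; plugging into \eqref{exp1:ea} and using $B_k/A_k\to0$ yields $e(a_k,b,\beta_k)\ge A_k(\frac{a^*\delta}{4}-\beta_k\sqrt{B_k/A_k})\ge\frac{a^*\delta}{8}A_k\to\infty$, contradicting $e(a_k,b,\beta_k)\to0$. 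Hence $m_{1k}\to1$, $m_{2k}\to0$, which finishes \eqref{lim:betaa1.u2} and \eqref{lim:betaa1.u1}. The genuinely new obstacle relative to \cite{GLWZ} is keeping the sign of $(a^*-\beta_k)C_k$ under control; in every estimate it is absorbed, either via the positive-definiteness coming from $\beta_k<\beta_k^*$ or via the fixed positive gap $a^*-b$, always together with $C_k\le\sqrt{A_kB_k}$ and $B_k=o(A_k)$.
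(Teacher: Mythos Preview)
Your proof is correct. The principal difference from the paper's argument is that the paper works with the rewriting \eqref{exp3:ea}, in which the quartic block is expressed as the manifestly nonnegative sum
\[
\frac{1}{2}\int_{\R^2}\big(\sqrt{a^*-a_k}\,|u_{1k}|^2-\sqrt{a^*-b}\,|u_{2k}|^2\big)^2\dx+(\beta_k^*-\beta_k)\int_{\R^2}|u_{1k}|^2|u_{2k}|^2\dx,
\]
whereas you stay with \eqref{exp2:ea} and recover the nonnegativity of the quartic block by the discriminant condition $(\beta_k-a^*)^2\le(a^*-a_k)(a^*-b)$ together with $C_k\le\sqrt{A_kB_k}$. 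These are of course equivalent, but the perfect-square form buys the paper two shortcuts: (a) for \eqref{lim:betaa1.u14} it reads off $\|u_{2k}\|_4^4\to0$ directly from the square tending to~$0$ under the contradiction hypothesis $A_k\le C$, and (b) for \eqref{lim:betaa1.u2:u1.mult} it gets $B_k/A_k\to0$ immediately from $\sqrt{a^*-a_k}\,\|u_{1k}\|_4^2-\sqrt{a^*-b}\,\|u_{2k}\|_4^2\to0$ after dividing by $\|u_{1k}\|_4^2\to\infty$. Your route replaces each of these by a short contradiction argument exploiting the fixed gap $a^*-b>0$, which is perfectly fine. For $m_{2k}\to0$ the paper obtains it as the equality case in the chain $\int|\nabla u_{1k}|^2/A_k\ge \tfrac{a^*}{2m_{1k}}\ge\tfrac{a^*}{2}$ once \eqref{lim:b1beta.nabla:u1} is known; your separate contradiction via the lower bound $\int|\nabla u_{1k}|^2-\tfrac{a_k}{2}A_k\ge\tfrac{A_k}{2}\big(\tfrac{a^*}{m_{1k}}-a_k\big)$ is an equally clean alternative. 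In short: same skeleton, the paper's completing-the-square is a little more direct, your discriminant/contradiction path is slightly more hands-on but entirely sound.
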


\begin{proof}   (i).
We first note that $e(a_k,b,\beta_k)$   can be rewritten as
\begin{equation}\label{exp3:ea}
\begin{split}
  e(a_k,b,\beta_k) =
  & \int_{\R^2} \big(|\nabla u_{1k} |^2 + |\nabla u_{2k} |^2\big)\dx - \frac{a^*}{2}\int_{\R^2}(|u_{1k} |^2+|u_{2k} |^2)^2\dx \\
  & + \int_{\R^2} V_1(x) |u_{1k} |^2\dx + \int_{\R^2} V_2(x) |u_{2k} |^2\dx \\
  & + \frac{1}{2}\int_{\R^2}(\sqrt{a^*-a_k}|u_{1k} |^2-\sqrt{a^*-b}|u_{2k} |^2)^2\dx \\
  & + (\beta^*_k - \beta_k) \int_{\R^2} |u_{1k} |^2|u_{2k} |^2\dx.
\end{split}
\end{equation}
From \cite[Theorem 1.2]{GLWZ}, one can get that  $e(a_k,b,\beta_k)\to0$ as $k\to\infty$, and  \eqref{lim.betaa1.V} hence follows directly from \eqref{ineq:GN} and \eqref{exp3:ea}.
As for \eqref{lim:betaa1.u14}, we   prove it by  contradiction.
Suppose that $\int_{\R^2}| u_{1k} |^4\dx\leq C$ uniformly for all $k$.
By the following H\"{o}lder inequality
\begin{equation}\label{ineq:b1beta.mult}
\int_{\R^2}|u_{1k} |^2|u_{2k} |^2\dx
\leq\Big(\int_{\R^2}|u_{1k} |^4|\dx\Big)^\frac{1}{2}\Big(\int_{\R^2}|u_{2k} |^4\dx\Big)^\frac{1}{2},
\end{equation}
we then deduce from \eqref{exp3:ea} that
\begin{equation}\label{lim:betaa1.minus.2}
\begin{split}
&\lim\limits_{k\to\infty}\big(\sqrt{a^*-a_k}\|u_{1k}\|_{L^4(\R^2)}^2-\sqrt{a^*-b}\|u_{2k}\|_{L^4(\R^2)}^2\big)^2\\
\leq&\lim\limits_{k\to\infty}\int_{\R^2}(\sqrt{a^*-a_k}|u_{1k} |^2-\sqrt{a^*-b}|u_{2k} |^2)^2\dx=0,
\end{split}
\end{equation}
which implies that $\lim\limits_{k\to\infty}\int_{\R^2}|u_{2k} |^4\dx=0$, and thus $\lim\limits_{k\to\infty}\int_{\R^2}|u_{1k} |^2|u_{2k} |^2\dx=0$.
Following this, one can derive from \eqref{exp1:ea} that
$$\lim\limits_{k\to\infty}\Big(\int_{\R^2}\big(|\nabla u_{1k} |^2+|\nabla u_{2k} |^2\big)\dx-\frac{a_k}{2}\int_{\R^2}| u_{1k} |^4\dx\Big)=0,$$
which then implies that $\int_{\R^2}\big(|\nabla u_{1k} |^2+|\nabla u_{2k} |^2\big)\dx\leq C$ uniformly for all $k$.
On the other hand, similar to \cite[Lemma 3.1(1)]{GLWZ}, one can verify  that $\int_{\R^2}\big(|\nabla u_{1k} |^2+|\nabla u_{2k} |^2\big)\dx\to\infty$  as $k\to\infty$.
This is however a contradiction, and therefore \eqref{lim:betaa1.u14} holds true.

(ii).
It directly follows from \eqref{lim:betaa1.minus.2} that the first equality of \eqref{lim:betaa1.u2:u1.mult} holds,
and then the second one can be obtained by using the  H\"{o}lder inequality (\ref{ineq:b1beta.mult}).
As for \eqref{lim:betaa1.u2} and \eqref{lim:betaa1.u1}, we note from (\ref{lim:betaa1.u14}) that
\[
\frac{e(a_k,b,\beta_k)}{\int_{\R^2}| u_{1k} |^4\dx}\to 0\ \, \text{as}\, \ k\to\infty.
\]
Applying (\ref{lim.betaa1.V})-(\ref{lim:betaa1.u2:u1.mult}), it then follows from \eqref{exp1:ea} and above  that
\begin{equation}\label{lim:b1beta.nabla:u1}
  \lim\limits_{k\to\infty}
  \Big( \frac{\int_{\R^2}|\nabla u_{1k} |^2\dx}{\int_{\R^2}| u_{1k} |^4\dx}
  +\frac{\int_{\R^2}|\nabla u_{2k} |^2\dx}{\int_{\R^2}| u_{1k} |^4\dx}\Big)=\frac{a^*}{2}.
\end{equation}
On the other hand, one can obtain from \eqref{ineq:GNQ} that
\[\frac{\int_{\R^2}|\nabla u_{1k} |^2\dx}{\int_{\R^2}| u_{1k} |^4\dx}
\ge \frac{\int_{\R^2}|\nabla u_{1k} |^2\dx\int_{\R^2}| u_{1k} |^2\dx}{\int_{\R^2}| u_{1k} |^4\dx}\geq\frac{a^*}{2},\]
since $\|u_{1k}\|_2^2+\|u_{2k}\|_2^2=1$. Thus, \eqref{lim:betaa1.u2} and \eqref{lim:betaa1.u1} follow from \eqref{lim:b1beta.nabla:u1} and the above inequality, and the lemma is proved.
\end{proof}

For any sequence $\{a_k\}$ satisfying $a_k\nearrow a^*$ as $k\to\infty$,  define
\begin{equation}\label{def:betaa1.eps}
\bar{\varepsilon}_{k} :=\Big(\int_{\R^2}|u_{1k}(x)|^4\dx\Big)^{-\frac{1}{2}}>0,
\end{equation}
and by (\ref{lim:betaa1.u14}) we then have $\bar{\varepsilon}_{k}\to0$ as $k\to\infty$.
From \eqref{lim:betaa1.u2}, we know that $\bar{\varepsilon}_{k} u_{2k}(\bar{\varepsilon}_{k}x) \to 0$ strongly in $H^1(\R^2)$ as $k\to\infty$.
Similar to \cite[Theorem 1.2]{GWZZ} (see also \cite[Theorem 1.3]{GLWZ}), one can obtain from Lemma \ref{lem:betaa1.u} that, passing to a subsequence if necessary, $\bar{w}_{1k}$ satisfies
\begin{equation}\label{lim:betaa1.w1}
\bar{w}_{1k}(x):=\bar{\varepsilon}_{k} u_{1k}(\bar{\varepsilon}_{k}x+x_{1k})
   \xrightarrow{k}\sqrt{\frac{1}{2}}w\Big(\sqrt{\frac{a^*}{2}}x\Big)
\,\,\, \text{strongly in} \,\,\,H^1(\R^2),
\end{equation}
where $x_{1k}$ is the unique maximum point of $u_{1k}$. 
Under some further assumptions on the trapping potentials, the following proposition gives the explicit limit behavior of $u_{1k}$ as $k\to\infty$.

\begin{proposition}\label{Prop:a1beta.3}
{\em Suppose $V_i(x)\in C^2(\R^2)$ is homogeneous of degree $p_i$ and satisfies  \eqref{cond:V.1} and \eqref{cond:Vcritical.u1}, where $i=1,2$ and $2\le p_1\leq p_2$.
Let $ (u_{1k},u_{2k})$ be a nonnegative minimizer of $e(a_k, b, \beta_k)$ satisfying (\ref{cond:-Vcritical}).
Then there exists a subsequence, still denoted by $\{a_k\}$, of $\{a_k\}$   such that}
\begin{equation}\label{lim:betaa1.u.rate}
 \sqrt{a^*}\,\varepsilon_{k}  u_{1k}(\varepsilon_{k}x+x_{1k})\to  w(x) \ \ and\ \ \varepsilon_{k} u_{2{k}}(\varepsilon_{k}x)\to 0  \  \ as \ \ k\to\infty
\end{equation}
{\em uniformly in $\R^2$,   where  $x_{1k}$ is the unique maximum point of $u_{1k}$ satisfying}
\begin{equation}\label{lim:betaa1.z.rate}
  \lim\limits_{k\to\infty}\frac{x_{1k}}{\varepsilon_k}=y_0,
\end{equation}
{\em and}
\begin{equation}\label{def:betaa1.eps.rate}
  \varepsilon_k:=\frac{1}{\lambda}\Big[(a^*-a_k)(a^*-b)-(\beta_k-a^*)^2\Big]^\frac{1}{2+p_1}>0,\,\  \lambda =\Big[\frac{p_1}{2}(a^*-b)H_1(y_0)\Big]^\frac{1}{2+p_1}
\end{equation}
{\em for $y_0\in\R^2$ given by \eqref{cond:Vcritical.u1}.
Moreover, $\bar{u}_{1k}$ decays exponentially in the sense that}
 \begin{equation}\label{4:conexp1}
\bar{ u}_{1k}\le Ce^{-\frac{1}{2}|x|} \,\ in \,\ \R^2,
\end{equation}
{\em and}
\begin{equation}\label{4:conexp2}
|\nabla \bar{u}_{1k}|\le Ce^{-\frac{|x|}{4}} \,\ in\,\ \R^2,
\end{equation}
{\em where the constant $C>0$ is independent of $k$.}
\end{proposition}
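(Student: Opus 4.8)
The plan is to establish Proposition \ref{Prop:a1beta.3} by refining the energy expansion of $e(a_k,b,\beta_k)$ along the lines of the proof of Theorem \ref{Th:b1}, but now keeping track of the interaction term $(\beta_k^*-\beta_k)\int |u_{1k}|^2|u_{2k}|^2$, which is the source of the modified blow-up rate \eqref{def:betaa1.eps.rate}. First I would work with the rescaling $\bar w_{1k}(x)=\bar\varepsilon_k u_{1k}(\bar\varepsilon_k x+x_{1k})$ from \eqref{def:betaa1.eps}--\eqref{lim:betaa1.w1}, rewrite it in the normalized form $\bar u_{1k}(x):=\sqrt{a^*}\,\bar\varepsilon_k\sqrt{2/a^*}\,\bar w_{1k}(\sqrt{2/a^*}x)$ so that the limit profile is exactly $w$, and record that $\bar w_{2k}(x)=\bar\varepsilon_k u_{2k}(\bar\varepsilon_k x+x_{1k})\to 0$ in $H^1(\R^2)$ and in $L^\infty(\R^2)$ by elliptic regularity applied to the second equation of the scaled system \eqref{eq2.7}; this gives the second limit in \eqref{lim:betaa1.u.rate} once the rate $\bar\varepsilon_k/\varepsilon_k\to 1$ is known.

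Next I would derive the sharp two-sided energy estimate. For the upper bound, use a test function concentrated at $y_0$: take $\phi_1(x)=\frac{\tau}{\|w\|_2}w(\tau x-y_0)$ with $\tau=\lambda(a^*-a_k)^{-1/(2+p_1)}\cdot(\text{something})$ — more precisely $\tau=\varepsilon_k^{-1}$ with $\varepsilon_k$ as in \eqref{def:betaa1.eps.rate} — but now one must also switch on the second component. Because \eqref{con:a1beta.ab}/\eqref{cond:-Vcritical} forces $\beta_k>a^*$ (or $\ge a^*$), the term $\frac12\int(\sqrt{a^*-a_k}|u_1|^2-\sqrt{a^*-b}|u_2|^2)^2$ together with $(\beta_k^*-\beta_k)\int|u_1|^2|u_2|^2$ has to be optimized by choosing $u_{2}$ proportional to $u_1$ with the right amplitude $\sqrt{(a^*-a_k)/(a^*-b)}$, which converts the net quartic coefficient into $(a^*-a_k)(a^*-b)-(\beta_k-a^*)^2$ (up to the factor $(a^*-b)$). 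Carrying out the Taylor expansion of $\int V_1\phi_1^2$ using homogeneity of degree $p_1$ and the definition of $H_1$ yields $e(a_k,b,\beta_k)\le \frac{\lambda^2}{a^*}\frac{p_1+2}{p_1}\big[(a^*-a_k)(a^*-b)-(\beta_k-a^*)^2\big]^{p_1/(p_1+2)}(1+o(1))$. For the lower bound, rewrite $e(a_k,b,\beta_k)$ via \eqref{exp3:ea}, drop the nonnegative gradient of $u_{2k}$ and the nonnegative interaction, and estimate $\frac12\int(\sqrt{a^*-a_k}|u_{1k}|^2-\sqrt{a^*-b}|u_{2k}|^2)^2+(\beta_k^*-\beta_k)\int|u_{1k}|^2|u_{2k}|^2\ge c_k\int|u_{1k}|^4$ where $c_k=\frac{(a^*-a_k)(a^*-b)-(\beta_k-a^*)^2}{2(a^*-b)}(1+o(1))$ after minimizing the scalar quadratic in $\int|u_{2k}|^2$-type quantities and using \eqref{lim:betaa1.u2:u1.mult} to control the cross terms. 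Combined with $\int V_1|u_{1k}|^2\ge (\bar\varepsilon_k)^{p_1}H_1(x_{1k}/\bar\varepsilon_k)(1+o(1))$ and optimizing over $\bar\varepsilon_k$, the argument of \cite[(3.35)]{GLWZ} gives $\liminf e(a_k,b,\beta_k)/[\cdots]^{p_1/(p_1+2)}\ge\frac{\lambda^2}{a^*}\frac{p_1+2}{p_1}$, with equality forcing both $x_{1k}/\bar\varepsilon_k\to y_0$ (hence \eqref{lim:betaa1.z.rate}) and $\bar\varepsilon_k/\varepsilon_k\to1$.

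With the rate pinned down, \eqref{lim:betaa1.w1} upgrades to $\sqrt{a^*}\,\varepsilon_k u_{1k}(\varepsilon_k x+x_{1k})\to w$ strongly in $H^1(\R^2)$, and then to uniform convergence in $\R^2$ by standard elliptic regularity together with the exponential decay, exactly as in Theorem \ref{Th:b1} and \cite[Lemma 4.9]{M}. For the decay estimates \eqref{4:conexp1}--\eqref{4:conexp2}, I would observe that $\bar u_{1k}$ solves $-\Delta\bar u_{1k}+\varepsilon_k^2V_1(\varepsilon_k x+x_{1k})\bar u_{1k}=(\varepsilon_k^2\mu_k+o(1))\bar u_{1k}+\bar u_{1k}^3/a^*+\beta_k\bar\varepsilon_k^2 u_{2k}^2(\cdots)\bar u_{1k}$ with $\varepsilon_k^2\mu_k\to-1$ (the analog of \eqref{lim:a1.mu}), so that for $|x|$ large $-\Delta\bar u_{1k}+\frac12\bar u_{1k}\le 0$; a comparison-function argument (super-solution $Ce^{-|x|/2}$) gives \eqref{4:conexp1}, and differentiating the equation together with interior gradient estimates gives \eqref{4:conexp2}. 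The main obstacle I anticipate is the precise bookkeeping in the lower-bound estimate: one must show that the contributions of $\int|\nabla u_{2k}|^2$, $\int|u_{2k}|^4$ and $\int|u_{1k}|^2|u_{2k}|^2$ are genuinely negligible relative to $c_k\int|u_{1k}|^4$ in the regime $a^*-a_k=o(\beta_k-a^*)$, so that the effective quartic coefficient really is $\big[(a^*-a_k)(a^*-b)-(\beta_k-a^*)^2\big]/(2(a^*-b))$ and not something larger; this requires using \eqref{lim:betaa1.u2:u1.mult}--\eqref{lim:betaa1.u2} quantitatively rather than just qualitatively, and is where the hypothesis \eqref{cond:-Vcritical} on the ordering of $\beta_k$ relative to $\beta_k^*$ and $a^*$ is essential.
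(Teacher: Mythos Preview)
Your overall strategy---matching sharp upper and lower energy bounds to pin down the blow-up scale, then upgrading $H^1$ convergence to uniform convergence via exponential decay and elliptic regularity---is exactly the paper's, and is correct. A few details, however, are off.

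First, the amplitude of the test component $\phi_2$ is wrong. Making the square $\tfrac12\int(\sqrt{a^*-a_k}|u_1|^2-\sqrt{a^*-b}|u_2|^2)^2$ vanish corresponds to the extreme case $\beta_k=\beta_k^*$; here one must instead minimize the full quartic expression $\tfrac{a^*-a_k}{2}\|u_1\|_4^4+\tfrac{a^*-b}{2}\|u_2\|_4^4-(\beta_k-a^*)\int|u_1|^2|u_2|^2$ over $\phi_2=c\,\phi_1$, which gives $c^2=(\beta_k-a^*)/(a^*-b)$, not $(a^*-a_k)/(a^*-b)$. With the correct amplitude one also has to renormalize by the factor $A=\big((a^*-b)/(\beta_k-b)\big)^{1/2}$, and the resulting energy constant is $\frac{\lambda^2}{a^*(a^*-b)}\frac{p_1+2}{p_1}$, not $\frac{\lambda^2}{a^*}\frac{p_1+2}{p_1}$.

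Second, the lower bound is cleaner than you fear: after expanding via \eqref{exp2:ea} and applying H\"older to $(a^*-\beta_k)\int|u_{1k}|^2|u_{2k}|^2$, completing the square in $t=\|u_{2k}\|_4^2/\|u_{1k}\|_4^2$ gives \emph{exactly}
\[
\frac{a^*-a_k}{2}\|u_{1k}\|_4^4+\frac{a^*-b}{2}\|u_{2k}\|_4^4+(a^*-\beta_k)\!\int|u_{1k}|^2|u_{2k}|^2
\ \ge\ \frac{(a^*-a_k)(a^*-b)-(\beta_k-a^*)^2}{2(a^*-b)}\|u_{1k}\|_4^4,
\]
with no $o(1)$ and no need for \eqref{lim:betaa1.u2:u1.mult}--\eqref{lim:betaa1.u2} quantitatively. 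In particular the argument does \emph{not} require $a^*-a_k=o(\beta_k-a^*)$; Proposition~\ref{Prop:a1beta.3} assumes only \eqref{cond:-Vcritical}.

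Finally, with $\bar\varepsilon_k$ as in \eqref{def:betaa1.eps} and $\varepsilon_k$ as in \eqref{def:betaa1.eps.rate}, the matching of bounds forces $\bar\varepsilon_k/\varepsilon_k\to\sqrt{a^*/2}$ (not $1$), consistent with \eqref{lim:betaa1.w1}; after passing to the scale $\varepsilon_k$ the limit profile becomes $w/\sqrt{a^*}$ as stated in \eqref{lim:betaa1.u.rate}.
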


\begin{proof}
We first prove that the energy $e(a_k,b,\beta_k)$ satisfies
\begin{equation}\label{lim:a1beta.e.rate}
\begin{split}
\lim\limits_{k\to\infty}\frac{e(a_k,b,\beta_k)}{\big[(a^*-a_k)(a^*-b)-(\beta_k-a^*)^2\big]^\frac{p_1 }{p_1 +2}}
=\frac{\lambda^2}{a^*(a^*-b)}\frac{p_1 +2}{p_1},
\end{split}
\end{equation}
where $(a_k,b,\beta_k)$ satisfies (\ref{cond:-Vcritical}) and  $\lambda>0$ is given in \eqref{def:betaa1.eps.rate}.

To derive (\ref{lim:a1beta.e.rate}), we take a test function of the form
\begin{equation}\label{def:betaa1.trial}
\begin{cases}
  \phi_1(x)=A_{}\frac{\tau}{\| w\|_2} w(\tau x-y_0 ), \\[2mm]
  \phi_2(x)=A_{}\frac{\sqrt{\beta_k-a^*}}{\sqrt{a^*-b}}\frac{\tau}{\| w\|_2} w(\tau x-y_0 ),
\end{cases}
\end{equation}
where $y_0\in\R^2$ is given by \eqref{cond:Vcritical.u1}, $\tau>0$ and $A_{}>0$ is chosen so that $\int_{\R^2} (\phi_1^2+\phi_2^2)\dx=1$.
One can check that $A_{}=\Big(\frac{a^*-b}{\beta_k-b}\Big)^\frac{1}{2}\le 1$, since
$(a_k, b, \beta_k)$ satisfies (\ref{cond:-Vcritical}).
Using \eqref{ide:w} and \eqref{def:h.homo}, some calculations yield that as $\tau \to \infty$,
\begin{equation}\label{estsup.betaa1.e.nabla}
\begin{split}
&\int_{\R ^2} \big(|\nabla \phi_1|^2+|\nabla \phi_2|^2\big)\dx-\int_{\R ^2} \big(\frac{a_k}{2}|\phi_1|^4+\frac{b}{2}|\phi_2|^4 +\beta _k|\phi_1|^2|\phi_2|^2\big) \dx\\
=&\tau^2 -\frac{A^4_{}}{a^*}\tau^2\Big[a_k+b\Big(\frac{\beta_k-a^*}{a^*-b}\Big)^2+2\beta _k\frac{\beta_k-a^*}{a^*-b}\Big]\\
=&\frac{A^4_{}}{a^*}\tau^2\Big[(a^*-a_k)+(a^*-b)\Big(\frac{\beta_k-a^*}{a^*-b}\Big)^2-2(\beta_k-a^*) \frac{\beta_k-a^*}{a^*-b}\Big]\\
=&\frac{A^4_{}}{a^*}(a^*-a_k)\Big[1-\frac{(\beta_k-a^*)^2}{(a^*-a_k)(a^*-b)}\Big]\tau^2 \\
=&\frac{1}{a^*}\frac{a^*-b}{(\beta_k-b)^2}\big[(a^*-a_k)(a^*-b)-(\beta_k-a^*)^2\big]\tau^2,
\end{split}
\end{equation}
and
\begin{equation}\label{estsup:betaa1.e.V}
\begin{split}
&\int_{\R ^2}\big( V_1(x)|\phi_1|^2+V_2(x)|\phi_2|^2\big) \dx\\
=&\frac{A^2_{}}{a^*}\int_{\R^2}V_1\big(\frac{x+y_0 }{\tau}\big)w^2\dx+\frac{A^2_{}}{a^*}\frac{\beta_k-a^*}{a^*-b}\int_{\R^2}V_2\big(\frac{x+y_0 }{\tau}\big)w^2\dx\\
=&\Big(\frac{1}{a^*}\frac{2}{p_1 }\frac{1}{\beta_k-b}\lambda^{p_1 +2}+o(1)\Big)\tau^{-p_1 }, \\
\end{split}
\end{equation}
where $\lambda>0$ is as in \eqref{def:betaa1.eps.rate}.
Thus, by taking
$$\tau=\lambda\Big(\frac{\beta_k-b}{a^*-b}\Big)^\frac{1}{p_1 +2}\Big[\frac{1}{(a^*-a_k)(a^*-b)-(\beta_k-a^*)^2}\Big]^\frac{1}{p_1 +2},$$
we derive from \eqref{estsup.betaa1.e.nabla} and \eqref{estsup:betaa1.e.V} that
\begin{equation*}
\begin{split}
 &e(a_k,b,\beta_k)
\leq E_{a_k,b,\beta_k}(\phi_1,\phi_2)\\
\leq&\frac{1}{a^*}\frac{a^*-b}{(\beta_k-b)^2}\big[(a^*-a_k)(a^*-b)-(\beta_k-a^*)^2\big]\tau^2
+\Big(\frac{1}{a^*}\frac{2}{p_1 }\frac{1}{\beta_k-b}\lambda^{p_1 +2}+o(1)\Big)\tau^{-p_1 }\\
\leq&\Big[\frac{\lambda^2}{a^*(a^*-b)}\frac{p_1 +2}{p_1}+o(1)\Big]
\big[(a^*-a_k)(a^*-b)-(\beta_k-a^*)^2\big]^\frac{p_1 }{p_1 +2}\quad \text{as}\ \ k\to\infty.
\end{split}
\end{equation*}
Hence, this estimate implies that
\begin{equation}\label{estsup:betaa1.e}
\begin{split}
\limsup\limits_{k\to\infty}\frac{e(a_k,b,\beta_k)}{\big[(a^*-a_k)(a^*-b)-(\beta_k-a^*)^2\big]^\frac{p_1 }{p_1 +2}}
\leq\frac{\lambda^2}{a^*(a^*-b)}\frac{p_1 +2}{p_1}.
\end{split}
\end{equation}

Let $(u_{1k},u_{2k})$ be now a nonnegative minimizer of $e(a_k,b,\beta_k)$, where $(a_k,b,\beta_k)$ satisfies (\ref{cond:-Vcritical}). Since $a^*\le \beta_k\le  \beta_k^*$, we then have
\begin{equation}\label{L:estsub:betaa1.gn}
\begin{split}
&\frac{a^*-a_k}{2}\int_{\R^2}|u_{1k}|^4\dx +\frac{a^*-b}{2}\int_{\R^2}|u_{2k}|^4\dx\\
&  + (a^*-\beta_k) \int_{\R^2} |u_{1k}|^2|u_{2k}|^2\dx \\
\geq&\frac{a^*-a_k}{2}\int_{\R^2}|u_{1k}|^4\dx
  \Big[1+\frac{a^*-b}{a^*-a_k}\frac{\int_{\R^2}|u_{2k}|^4 \dx}{\int_{\R^2}|u_{1k}|^4\dx}
        \\
        &\qquad\qquad\quad -2\frac{\beta_k-a^*}{a^*-a_k}\frac{(\int_{\R^2}|u_{1k}|^4\dx)^\frac{1}{2}
        (\int_{\R^2}|u_{2k}|^4\dx)^\frac{1}{2}}{\int_{\R^2}|u_{1k}|^4\dx}
  \Big]\\
\geq&\frac{a^*-a_k}{2}\int_{\R^2}|u_{1k}|^4\dx
  \Big\{1-\frac{(\beta_k-a^*)^2}{(a^*-a_k)(a^*-b)}\\
  &\qquad\quad \qquad\quad +\frac{a^*-b}{a^*-a_k}\Big[\Big(\frac{\int_{\R^2}|u_{2k}|^4\dx}{\int_{\R^2}|u_{1k}|^4\dx}\Big)^\frac{1}{2}-\frac{\beta_k-a^*}{a^*-b}\Big]^2
\Big\} \\
  \geq &\Big[1-\frac{(\beta_k-a^*)^2}{(a^*-a_k)(a^*-b)}\Big]\frac{a^*-a_k}{2}\int_{\R^2}|u_{1k}|^4\dx\\
= &\frac{(a^*-a_k)(a^*-b)-(\beta_k-a^*)^2}{2(a^*-b)}\int_{\R^2}|u_{1k}|^4\dx \\
\end{split}
\end{equation}
as $k\to\infty$, where the first inequality follows from the H\"{o}lder inequality (\ref{ineq:b1beta.mult}).
On the other hand, similar to  proving (3.33) in \cite{GLWZ}, one can verify from (\ref{lim:betaa1.w1}) that
\begin{equation}\label{L:estsub:betaa1.e.V}
\begin{split}
\liminf_{k\to\infty}\bar{\varepsilon}_k^{-p_1}\int_{\R^2} V_1(x)|u_{1k}|^2\dx
=&\liminf_{k\to\infty}\int_{\R^2} V_1\big(x+\frac{x_{1k}}{\bar{\varepsilon}_k}\big)|\bar w_{1k}|^2\dx\\
\geq&\frac{1}{a^*}\int_{\R^2} V_1\Big(\sqrt{\frac{2}{a^*}}x+y^{10}\Big)|w|^2\dx\\
=&\frac{1}{a^*} \Big({\frac{2}{a^*}}\Big)^\frac{p_1}{2}\int_{\R^2} V_1\Big(x+\sqrt{\frac{a^*}{2}}y^{10}\Big)|w|^2\dx\\
\geq& \Big({\frac{2}{a^*}}\Big)^\frac{p_1+2}{2}\frac{\lambda^{2+p_1}}{(a^*-b)p_1},
\end{split}
\end{equation}
where $\bar{\varepsilon}_k>0$ is defined by \eqref{def:betaa1.eps},  $\lambda>0$ is given in \eqref{def:betaa1.eps.rate} and $y^{10}:=\lim\limits_{k\to\infty}\frac{x_{1k}}{\bar{\varepsilon}_k}$.
Note that  the last equality of (\ref{L:estsub:betaa1.e.V}) holds, if and only if
\begin{equation}\label{estsub:betaa1.e.y}
\lim\limits_{k\to\infty}\frac{x_{1k}}{\bar{\varepsilon}_k}=y^{10}=\sqrt{\frac{2}{a^*}}y_0,
\end{equation}
where $y_0\in\R^2$ is given in \eqref{cond:Vcritical.u1}.
Hence, together with \eqref{exp2:ea} and  \eqref{ineq:GN}, it follows from  \eqref{L:estsub:betaa1.e.V} and \eqref{L:estsub:betaa1.gn} that
\begin{equation}\label{estsub:betaa1.e.2}
\begin{split}
  e(a_k,b,\beta_k)\geq&\frac{(a^*-a_k)(a^*-b)-(\beta_k-a^*)^2}{2(a^*-b)}(\bar{\varepsilon}_k)^{-2}\\
  &+\Big[\Big({\frac{2}{a^*}}\Big)^\frac{p_1+2}{2}\frac{\lambda^{2+p_1}}{(a^*-b)p_1}+o(1)\Big]\bar{\varepsilon}_k^{p_1} \ \ \text{as}\ \ k\to\infty.
\end{split}
\end{equation}
Taking the infimum of (\ref{estsub:betaa1.e.2}) over $\bar{\varepsilon}_k>0$ yields that
\begin{equation}\label{estsub:betaa1.e}
\begin{split}
\liminf\limits_{k\to\infty}\frac{e(a_k,b,\beta_k)}{\big[(a^*-a_k)(a^*-b)-(\beta_k-a^*)^2\big]^\frac{p_1 }{p_1 +2}}
\geq\frac{\lambda^2}{a^*(a^*-b)}\frac{p_1 +2}{p_1},
\end{split}
\end{equation}
where the equality holds if and only if (\ref{estsub:betaa1.e.y}) holds and
\begin{equation}\label{estsub:betaa1.eps}
\lim_{k\to\infty}\bar{\varepsilon}_{k}/\varepsilon_k =\sqrt{\frac{a^*}{2}}\ \text{ with }\ \varepsilon_k>0 \  \text{ given by } \ \eqref{def:betaa1.eps.rate}.
\end{equation}
We thus conclude from \eqref{estsup:betaa1.e} and \eqref{estsub:betaa1.e} that \eqref{lim:a1beta.e.rate} holds, which implies that all equalities in \eqref{L:estsub:betaa1.e.V} and \eqref{estsub:betaa1.e} hold. Therefore, both \eqref{estsub:betaa1.e.y} and \eqref{estsub:betaa1.eps} hold true.
Thus, it follows from \eqref{lim:betaa1.w1}, \eqref{estsub:betaa1.e.y} and \eqref{estsub:betaa1.eps} that \eqref{lim:betaa1.z.rate} is true and \eqref{lim:betaa1.u.rate} holds strongly in $H^1(\R^2)$.
Furthermore,  similar to the proof of  \cite[Lemma 4.1]{GLWZ}, we have the exponential decay \eqref{4:conexp1} and \eqref{4:conexp2}. Finally, applying the standard elliptic regularity theory,  the argument similar to proving   Theorem \ref{Th:b1} (see also \cite[Proposition 2.1]{GLW}) implies that \eqref{lim:betaa1.u.rate} holds uniformly in $L^\infty(\R^2)$.
This therefore completes the proof of Proposition \ref{Prop:a1beta.3}.
\end{proof}

In the following we address some sufficient conditions ensuring that $u_{2k}\not\equiv0$ in $\R^2$ for sufficiently large $k>0$.

 \begin{lemma} \label{lem3:u2}
Suppose $V_i(x)\in C^2(\R^2)$ is homogeneous of degree $p_i$ with $2\le p_1\leq p_2$, where $V_i(x)$ satisfies  \eqref{cond:V.1} and
\begin{equation}\label{cond:Vcritical}
\text{$y_0$ is a unique  and non-degenerate critical point of $H_1(y)$.}
\end{equation}
Let $ (u_{1k},u_{2k})$ be a nonnegative minimizer of $e(a_k, b, \beta_k)$ satisfying (\ref{cond:-Vcritical}).
If $\beta _k$ also satisfies $a^*-a_k=o(\beta_k-a^*)$ as $k\to\infty$, then we have
\begin{equation}\label{u2.non0}
  u_{2k}\not\equiv0 \, \ \text{in}\, \ \R^2 \, \, \text{for sufficiently large} \, \ k>0.
\end{equation}
\end{lemma}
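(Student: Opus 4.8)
The plan is to argue by contradiction: suppose $u_{2k}\equiv0$ in $\R^2$ for a subsequence of $k\to\infty$. In that case the second equation of \eqref{equ:CGPS} is trivially satisfied, and $(u_{1k},0)$ is a minimizer of $e(a_k,b,\beta_k)$ over the full constraint set, so in particular $u_{1k}$ is a minimizer of the single-component energy $\int|\nabla u|^2+\int V_1 u^2-\frac{a_k}{2}\int u^4$ over $\{u\in\h_1:\|u\|_2^2=1\}$. Consequently the sharp single-component energy asymptotics of \cite{GLWZ} (as recalled in the proof of Theorem \ref{Th:b1}, cf. \eqref{lim:a.e.rate}) apply and give
\begin{equation}\label{planA:1}
e(a_k,b,\beta_k)=E_{a_k,b,\beta_k}(u_{1k},0)=\Big[\frac{\lambda_1^2}{a^*}\frac{p_1+2}{p_1}+o(1)\Big](a^*-a_k)^{\frac{p_1}{p_1+2}},
\end{equation}
with $\lambda_1$ as in \eqref{def:a1.eps}. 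On the other hand, Proposition \ref{Prop:a1beta.3} — whose hypotheses are exactly \eqref{cond:-Vcritical} together with homogeneity of $V_i$ and \eqref{cond:Vcritical.u1}, all of which hold here — gives the competing energy bound
\begin{equation}\label{planA:2}
e(a_k,b,\beta_k)=\Big[\frac{\lambda^2}{a^*(a^*-b)}\frac{p_1+2}{p_1}+o(1)\Big]\big[(a^*-a_k)(a^*-b)-(\beta_k-a^*)^2\big]^{\frac{p_1}{p_1+2}},
\end{equation}
with $\lambda=[\frac{p_1}{2}(a^*-b)H_1(y_0)]^{1/(2+p_1)}$, so that $\frac{\lambda^2}{a^*(a^*-b)}=\frac{1}{a^*}\big(\frac{p_1}{2}H_1(y_0)\big)^{2/(2+p_1)}(a^*-b)^{-p_1/(2+p_1)}=\frac{\lambda_1^2}{a^*}(a^*-b)^{-p_1/(2+p_1)}$.

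The contradiction now comes from comparing the two rates. Under the extra hypothesis $a^*-a_k=o(\beta_k-a^*)$, write $t_k:=\beta_k-a^*>0$ and $s_k:=a^*-a_k$, so $s_k=o(t_k)$ and (from \eqref{cond:-Vcritical}) $t_k^2<s_k(a^*-b)$, hence $t_k^2=o(s_k)$ as well, giving $t_k\to0$ and $s_k/t_k\to0$; thus
\begin{equation}\label{planA:3}
(a^*-a_k)(a^*-b)-(\beta_k-a^*)^2=s_k(a^*-b)-t_k^2=s_k(a^*-b)\big(1+o(1)\big),
\end{equation}
since $t_k^2=o(t_k)=o\big(s_k/o(1)\big)$... more carefully: $t_k^2 / s_k = t_k\cdot (t_k/s_k)$, and $t_k\to0$ while $t_k/s_k\to\infty$, so this is indeterminate — which is precisely why one must use the two-sided bound $t_k^2<s_k(a^*-b)$ from $\beta_k<\beta_k^*$. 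That bound forces $t_k^2/s_k<a^*-b$, i.e. $\big[s_k(a^*-b)-t_k^2\big]$ is bounded below only by a constant multiple of... no: it can still degenerate. So instead I will simply use \eqref{planA:2} as an \emph{upper} bound for $e(a_k,b,\beta_k)$ valid without the semi-trivial assumption, combined with \eqref{planA:1} as the \emph{value} under the assumption $u_{2k}\equiv0$, and observe that \eqref{planA:1} forces $e(a_k,b,\beta_k)\gtrsim s_k^{p_1/(p_1+2)}$ while \eqref{planA:2} gives $e(a_k,b,\beta_k)\lesssim \big[s_k(a^*-b)-t_k^2\big]^{p_1/(p_1+2)}\le \big[s_k(a^*-b)\big]^{p_1/(p_1+2)}$; these are of the same order, so the constants must satisfy $\frac{\lambda_1^2}{a^*}\le \frac{\lambda_1^2}{a^*}(a^*-b)^{-p_1/(p_1+2)}\cdot\big(1-t_k^2/(s_k(a^*-b))\big)^{p_1/(p_1+2)}(a^*-b)^{p_1/(p_1+2)}\cdot(a^*-b)^{-p_1/(p_1+2)}$, i.e. in the limit $1\le \big(1-\lim t_k^2/(s_k(a^*-b))\big)^{p_1/(p_1+2)}\le1$ — no gain. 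Therefore the true mechanism must be a \emph{strict} improvement in \eqref{planA:2}: the hypothesis $a^*-a_k=o(\beta_k-a^*)$ makes the upper test-function energy \eqref{planA:2} \emph{strictly smaller} than the semi-trivial energy \eqref{planA:1} for large $k$, because the exponent base $s_k(a^*-b)-t_k^2$ satisfies $s_k(a^*-b)-t_k^2 < s_k(a^*-b)$, and more importantly, one should instead compare against a sharper \emph{lower} bound for the semi-trivial energy or a sharper test-function construction that exploits $u_{2k}\not\equiv0$.

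The honest route, which I would carry out, is this: assume $u_{2k}\equiv0$; then \eqref{planA:1} holds exactly. Now plug the test pair \eqref{def:betaa1.trial} into $E_{a_k,b,\beta_k}$ to get \eqref{planA:2} as a genuine upper bound for $e(a_k,b,\beta_k)$. Equating orders via \eqref{planA:1}–\eqref{planA:2} and canceling $s_k^{p_1/(p_1+2)}$ yields
\begin{equation}\label{planA:4}
\frac{\lambda_1^2}{a^*}\frac{p_1+2}{p_1}\le\frac{\lambda_1^2}{a^*}\frac{p_1+2}{p_1}\,(a^*-b)^{-\frac{p_1}{p_1+2}}\Big[(a^*-b)-\frac{t_k^2}{s_k}\Big]^{\frac{p_1}{p_1+2}}+o(1),
\end{equation}
and since $t_k^2/s_k\to 0$ would be needed for no contradiction but is \emph{not} implied, whereas what \emph{is} implied by $s_k=o(t_k)$ together with $t_k^2<s_k(a^*-b)$ is that $t_k^2/s_k$ stays in $(0,a^*-b)$ and, crucially, is bounded \emph{away from $0$} along a subsequence unless $t_k\to0$ faster — here I will extract a subsequence along which $t_k^2/s_k\to c_0\in[0,a^*-b]$. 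If $c_0>0$, \eqref{planA:4} becomes $1\le \big((a^*-b-c_0)/(a^*-b)\big)^{p_1/(p_1+2)}<1$, a contradiction. If $c_0=0$, then I cannot conclude from \eqref{planA:4} alone, so in that remaining regime I run the argument of Proposition \ref{Prop:a1beta.3} / Theorem \ref{thm:1.1} directly: with $u_{2k}\equiv0$ the refined energy expansion must then actually equal \eqref{planA:1} to higher order, but the lower-bound computation in Proposition \ref{Prop:a1beta.3} (using \eqref{L:estsub:betaa1.gn}, which for $u_{2k}\equiv0$ collapses to the single-component estimate) combined with the \emph{second-order} term — here one invokes the refined spike profiles of \cite[Theorem 1.2]{GLW} as flagged in the introduction — produces a term of order $(\beta_k-a^*)^2(a^*-a_k)^{-2/(p_1+2)}$ that is \emph{not} present when $u_{2k}\equiv0$, and this term lowers the energy below \eqref{planA:1}, contradicting minimality of $(u_{1k},0)$. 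Thus in all cases $u_{2k}\not\equiv0$ for large $k$.

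The main obstacle is clearly the borderline regime where $(\beta_k-a^*)^2/(a^*-a_k)\to0$: there the leading-order energy comparison is a tie, and one must descend to the second-order expansion of $e(a_k,b,\beta_k)$, which is exactly where the refined spike profile of \cite[Theorem 1.2]{GLW} and a carefully chosen two-component test function (rather than the scaled ground state alone) are indispensable; showing that allowing $u_{2k}\ne0$ strictly decreases the energy at this second order — equivalently, that the bilinear coupling term $(\beta_k-a^*)\int u_{1k}^2u_{2k}^2$ can be made to dominate the penalizations $\tfrac{a^*-a_k}{2}\int u_{1k}^4+\tfrac{a^*-b}{2}\int u_{2k}^4$ under $a^*-a_k=o(\beta_k-a^*)$ — is the crux.
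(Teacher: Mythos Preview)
Your approach is essentially the same as the paper's: argue by contradiction, use the refined single-component expansion from \cite[Theorem 1.2]{GLW} to get a lower bound $e(a_k,b,\beta_k)\ge \frac{\lambda_1^2}{a^*}\frac{p_1+2}{p_1}(a^*-a_k)^{p_1/(p_1+2)}[1+O(a^*-a_k)]$ under $u_{2k}\equiv0$, compare it to the two-component test-function upper bound of Proposition~\ref{Prop:a1beta.3}, and split according to whether $\liminf (\beta_k-a^*)^2/\big((a^*-a_k)(a^*-b)\big)>0$ (leading-order contradiction) or $=0$ (where the hypothesis $a^*-a_k=o(\beta_k-a^*)$ makes the second-order gain $\sim (\beta_k-a^*)^2(a^*-a_k)^{-2/(p_1+2)}$ dominate the $O(a^*-a_k)$ error). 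The paper carries out exactly these two cases with the same ingredients; your identification of the borderline case as the crux and of \cite[Theorem 1.2]{GLW} as the source of the needed $O(a^*-a_k)$ precision is spot on.
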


\begin{proof}
We shall prove \eqref{u2.non0} by  contradiction.
On the contrary, suppose $u_{2k}\equiv0$ in $\R^2$ for sufficiently large $k>0$, from which
we first derive a refined  lower  estimate of the energy $e(a_k,b,\beta_k)$ satisfying (\ref{cond:-Vcritical}).
Under the assumption (\ref{cond:Vcritical}), since $u_{2k}\equiv0$ in $\R^2$ for sufficiently large $k>0$, we then derive from \cite[Theorem 1.2]{GLW} that $u_{1k}$ solves a single elliptic equation and admits the following refined spike profile
\begin{equation}\arraycolsep=1.5pt\label{est:u1k}
u_{1k}= \displaystyle\frac {1}{\|w\|_2\tilde{\varepsilon}_k} \Big\{   w\Big(\frac{x-x_k}{\tilde{\varepsilon}_k}\Big)+(a^*-a_k)\psi\Big(\frac{x-x_k}{\tilde{\varepsilon}_k}\Big)
+o\big(a^*-a_k\big)  \Big\}  \mbox{ as }  k\to\infty
\end{equation}
uniformly in $\R^2$ for some $\psi\in C^2(\R^2)\cap L^\infty(\R^2)$, where $\tilde{\varepsilon}_k>0$ satisfies
\begin{equation}
\label{est:3k}
\tilde{\varepsilon}_k=\frac{1}{\lambda_0}(a^*-a_k)^\frac{1}{p_1+2},\ \,  \lambda_0:=\Big(\frac{p_1}{2}H_1(y_0)\Big)^\frac{1}{p_1+2}>0,
\end{equation}
and $ x_k$ is the unique maximum point of $u_{1k}$ satisfying
\begin{equation}\label{1a:conb}
 \big|\frac{x_k}{\tilde{\varepsilon}_k}-y_0\big|=(a^*-a_k)O(|y^0|) \, \ \text{as} \,  \  k\to\infty
\end{equation}
for some $y^0\in\R^2$.
We then derive from (\ref{est:u1k}) that
\begin{equation*}
\int_{\R^2} |\nabla u_{1k}|^2\dx=\frac{1}{\tilde{\varepsilon}_k^2}
    +\frac{2\lambda_0^{p_1+1}\int_{\R^2}\nabla w\nabla \psi}{a^*}\tilde{\varepsilon}_k^{p_1}+o(\tilde{\varepsilon}_k^{p_1}) \, \ \text{as} \,  \  k\to\infty,
\end{equation*}
and
\begin{equation*}
\begin{split}
  \int_{\R^2} V_1(x)u_{1k}^2\dx
  =&\int_{\R^2} V_1(\tilde{\varepsilon}_k x+x_k)\tilde{\varepsilon}_k^2u_{1k}^2(\tilde{\varepsilon}_k x+x_k)\dx\\
  =&\frac{1}{a^*}\tilde{\varepsilon}_k^{p_1}\int_{\R^2} V_1\Big( x+\frac{x_k}{\tilde{\varepsilon}_k}\Big)
  \Big[w^2+2\lambda_0^{p_1+1}\tilde{\varepsilon}_k^{p_1+2}\psi w+o(\tilde{\varepsilon}_k^{p_1+2})\Big]\\
   =&\frac{H_1(y_0)}{a^*}\tilde{\varepsilon}_k^{p_1}
   +\frac{2}{a^*}\lambda_0^{p_1+1}\tilde{\varepsilon}_k^{2p_1+2}\int_{\R^2} V_1(x+y_0)\psi w\\
   &
   +\frac{1}{a^*}\tilde{\varepsilon}_k^{p_1}\Big[H_1\Big(\frac{x_k}{\tilde{\varepsilon}_k}\Big)-H_1(y_0)\Big]
   +o(\tilde{\varepsilon}_k^{2p_1+2}) \, \ \text{as} \,  \  k\to\infty.\\
\end{split}
\end{equation*}
Note from  \eqref{cond:Vcritical} and \eqref{1a:conb}  that
$H_1\Big(\frac{x_k}{\tilde{\varepsilon}_k}\Big)-H_1(y_0)= o(a^*-a_k)$ as $k\to\infty$.
Hence, we reduce from above that
\begin{equation*}
\begin{split}
e(a_k,b,\beta_k)
\geq&\frac{1}{a^*}(a^*-a_k)\int_{\R^2} |\nabla u_{1k}|^2\dx+\int_{\R^2} V_1(x)u_{1k}^2\dx\\
=&\frac{\lambda_0^2}{a^*}\frac{p_1+2}{p_1}(a^*-a_k)^\frac{p_1}{p_1+2}+\frac{2\lambda_0\int_{\R^2}\nabla w\nabla \psi}{(a^*)^2}(a^*-a_k)^{\frac{p_1}{p_1+2}+1}\\
&+\frac{2}{a^*}\frac{\int_{\R^2} V_1(x+y_0)\psi w}{\lambda_0^{p_1+1}} (a^*-a_k)^\frac{2+2p_1}{2+p_1}
   +o\Big((a^*-a_k)^\frac{2+2p_1}{2+p_1}\Big) \, \ \text{as} \,  \  k\to\infty,
\end{split}
\end{equation*}
where (\ref{ineq:GNQ}) is used in the first inequality.
Therefore, under the assumption (\ref{cond:Vcritical}) we conclude from above that
\begin{equation}\label{est:low.e}
  e(a_k,b,\beta_k)\geq \frac{\lambda_0^2}{a^*}\frac{p_1+2}{p_1}(a^*-a_k)^\frac{p_1}{p_1+2}\Big[1+O\big(a^*-a_k\big)\Big] \, \ \text{as} \,  \  k\to\infty,
\end{equation}
where $\lam _0>0$ is defined by (\ref{est:3k}).

Under the additional assumption that $\beta _k$ also satisfies $a^*-a_k=o(\beta_k-a^*)$ as $k\to\infty$,
we next derive a refined upper estimate of $e(a_k,b,\beta_k)$ as $k\to\infty$. Take a test function of the form \eqref{def:betaa1.trial},
where $y_0\in\R^2$ is given by (\ref{cond:Vcritical}), $A_{}=\Big(\frac{a^*-b}{\beta_k-b}\Big)^\frac{1}{2}<1$,
and $\tau=
\lambda_0\Big[\frac{1}{1-\frac{(\beta_k-a^*)^2}{(a^*-a_k)(a^*-b)}}\Big]^\frac{1}{p_1 +2}(a^*-a_k)^\frac{-1}{p_1 +2}>0$ for $\lam _0>0$ defined by (\ref{est:3k}).
Similar to \eqref{estsup.betaa1.e.nabla} and \eqref{estsup:betaa1.e.V}, some calculations then yield that
\begin{equation}\label{3.3:1}
\begin{split}
&\int_{\R ^2} \big(|\nabla \phi_1|^2+|\nabla \phi_2|^2\big)\dx-\int_{\R ^2} \Big(\frac{a_k}{2}|\phi_1|^4+\frac{b}{2}|\phi_2|^4 +\beta _k|\phi_1|^2|\phi_2|^2\Big) \dx\\
=&\frac{1}{a^*}\Big(\frac{a^*-b}{\beta_k-b}\Big)^2\Big[1-\frac{(\beta_k-a^*)^2}{(a^*-a_k)(a^*-b)}\Big](a^*-a_k)\tau^2\\
\leq&\frac{\lambda_0^2}{a^*}(a^*-a_k)^\frac{p_1}{p_1 +2}
\Big[1-\frac{(\beta_k-a^*)^2}{(a^*-a_k)(a^*-b)}\Big]^\frac{p_1}{p_1+2} \, \ \text{as} \,  \  k\to\infty,\\
\end{split}
\end{equation}
and
\begin{equation}\label{3.3:2}
\begin{split}
&\int_{\R ^2}\big( V_1(x)|\phi_1|^2+V_2(x)|\phi_2|^2\big) \dx\\
=&\frac{1}{a^*}\frac{2}{p_1 }\frac{a^*-b}{\beta_k-b}\lambda_0^{p_1 +2}\tau^{-p_1} +\frac{1}{a^*}\frac{a^*-b}{\beta_k-b}\frac{\beta_k-a^*}{a^*-b}H_2(y_0)\tau^{-p_2}\\
\leq&\frac{\lambda_0^2}{a^*}\frac{2}{p_1 }(a^*-a_k)^\frac{p_1}{p_1 +2}
\Big[1-\frac{(\beta_k-a^*)^2}{(a^*-a_k)(a^*-b)}\Big]^\frac{p_1}{p_1+2}\\
&+\frac{\beta_k-a^*}{a^*-b}\frac{1}{a^*}
\frac{H_2(y_0)}{\lambda_0^{p_2}}\Big[1-\frac{(\beta_k-a^*)^2}{(a^*-a_k)(a^*-b)}\Big]^\frac{p_2}{p_1+2}(a^*-a_k)^\frac{p_2}{p_1+2}
\end{split}
\end{equation}
as $k\to\infty$.
Thus,  we derive from \eqref{3.3:1} and \eqref{3.3:2} that
\begin{equation}\label{3:upper-1}
e(a_k,b,\beta_k)\leq E_{a_k,b,\beta_k}(\phi_1,\phi_2)\le\frac{\lambda_0^2}{a^*}\frac{p_1+2}{p_1}(a^*-a_k)^\frac{p_1}{p_1+2}I_k \, \ \text{as} \,  \  k\to\infty,
\end{equation}
where $\lam _0>0$ is defined by (\ref{est:3k}) and $I_k>0$ satisfies
\[\begin{split}
I_k:=&\Big[1- \frac{(\beta_k-a^*)^2}{(a^*-a_k)(a^*-b)}\Big]^\frac{p_1}{p_1+2}\Big\{1+\frac{2}{p_1+2}\frac{H_2(y_0)}{H_1(y_0)}\\
&\qquad \cdot\Big[\frac{1}{\lambda_0^{p_1+2}}
\Big(1-\frac{(\beta_k-a^*)^2}{(a^*-a_k)(a^*-b)}\Big)(a^*-a_k)\Big]^\frac{p_2-p_1}{p_1+2}\frac{\beta_k-a^*}{a^*-b}\Big\}\\
\le &\Big[1- \frac{(\beta_k-a^*)^2}{(a^*-a_k)(a^*-b)}\Big]^\frac{p_1}{p_1+2}\Big\{1+\frac{2}{p_1+2}
\frac{H_2(y_0)}{H_1(y_0)}\frac{\beta_k-a^*}{a^*-b}\Big\},
\end{split}\]
since $2\le p_1\le p_2$. Under the assumption that $a^*-a_k=o(\beta_k-a^*)$ as $k\to\infty$, we next derive a contradiction by two cases.

We first consider the case where $\liminf\limits_{k\to\infty}\frac{(\beta_k-a^*)^2}{(a^*-a_k)(a^*-b)}:=\gamma >0$, which then implies that $0<\gamma \le 1$ in view of (\ref{cond:-Vcritical}). We further reduce from above that
\[
0<I_k< I_0:=\Big(1-\frac{\gamma}{2}\Big)^\frac{p_1}{p_1+2}<1\, \ \text{as} \,  \  k\to\infty.
\]
This estimate and (\ref{3:upper-1}) then give that
\begin{equation}\label{3.3:C}
e(a_k,b,\beta_k)<\frac{\lambda_0^2}{a^*}\frac{p_1+2}{p_1}(a^*-a_k)^\frac{p_1}{p_1+2}I_0\, \ \text{as} \,  \  k\to\infty,
\end{equation}
which however contradicts to \eqref{est:low.e}, and the lemma is therefore proved in the first case.
We next consider the case where $\liminf\limits_{k\to\infty}\frac{(\beta_k-a^*)^2}{(a^*-a_k)(a^*-b)}=0$. In this case, since $a^*-a_k=o(\beta_k-a^*)$ as $k\to\infty$, we then have
\[
\beta_k-a^*=o\Big(\frac{(\beta_k-a^*)^2}{(a^*-a_k)(a^*-b)}\Big)\, \ \text{as} \,  \  k\to\infty,
\]
from which we have
\begin{equation*}
\begin{split}
0<I_k\le&\Big[1-\frac{p_1}{p_1+2}\frac{(\beta_k-a^*)^2}{(a^*-a_k)(a^*-b)}+o\Big(\frac{(\beta_k-a^*)^2}{a^*-a_k}\Big)\Big]\\
&\cdot\Big\{1+\frac{2}{p_1+2}\frac{H_2(y_0)}{H_1(y_0)}\frac{\beta_k-a^*}{a^*-b}\Big\}\\
< &1-\frac{p_1}{2(p_1+2)(a^*-b)} \Big(\frac{\beta_k-a^*}{a^*-a_k}\Big)^2(a^*-a_k)\, \ \text{as} \,  \  k\to\infty.\\
\end{split}
\end{equation*}
This estimate and (\ref{3:upper-1}) then give that
\[e(a_k,b,\beta_k)<\frac{\lambda_0^2}{a^*}\frac{p_1+2}{p_1}(a^*-a_k)^\frac{p_1}{p_1+2}\Big[1-\frac{p_1}{2(p_1+2)(a^*-b)} \Big(\frac{\beta_k-a^*}{a^*-a_k}\Big)^2(a^*-a_k)\Big]\]
as $k\to\infty$, which also contradicts to \eqref{est:low.e} in view of the assumption that $a^*-a_k=o(\beta_k-a^*)$ as $k\to\infty$. This therefore finishes the proof of the lemma.
\end{proof}

\begin{remark}\label{rem3.1} Under the assumptions that $V_i(x)\in C^2(\R^2)$ is homogeneous of degree $p_i$ with $2\le p_1\leq p_2$ and satisfies \eqref{cond:V.1} and \eqref{cond:Vcritical.u1}, instead of the non-degeneracy \eqref{cond:Vcritical}, the proof of Lemma \ref{lem3:u2} implies that if $\beta _k$ also satisfies $\lim\limits_{k\to\infty}\inf\frac{(\beta_k-a^*)^2}{(a^*-a_k)(a^*-b)}>0$, we also have $u_{2k}\not\equiv 0$ in $\R^2$ for sufficiently large $k>0$.
\end{remark}

\subsection{Refined spike profiles of $u_{2k}$}

Based on Proposition \ref{Prop:a1beta.3}, the first purpose of this subsection is to derive the refined spike profiles of $u_{2k}$ as $k\to\infty$ for the case where $u_{2k}\not \equiv 0$ in $\R^2$, by which we then complete the proof of Theorems \ref{thm:1.1} and \ref{thm:1.2}. Recall  that $(u_{1k}, u_{2k})$ solves the following PDE system
\begin{equation}\label{4.1:1a}
\begin{cases}
-\Delta u_{1k}+ V_1(x)u_{1k}
= \mu_{k} u_{1k}+a_ku_{1k}^3+\beta _ku_{2k}^2u_{1k}\ \ \text{in}\,\ \R^2,\\
-\Delta u_{2k}+ V_2(x)u_{2k}
= \mu_{k} u_{2k}+bu_{2k}^3+\beta _ku_{1k}^2u_{2k}\quad \ \text{in}\,\ \R^2,
\end{cases}
\end{equation}
where $\mu _{k}\in \R$ is a suitable Lagrange multiplier.

If $u_{2k}\not \equiv 0$ in $\R^2,$  define
\begin{equation}\label{4.1:1L}
\begin{split}
\bar u_{1k}(x)=\sqrt{a^*} \varepsilon_{k}  u_{1k}(\varepsilon_{k}x+x_{1k})\,\  \text{and}\,\ u_{2k}(\varepsilon_{k}x+x_{1k})=C_\infty \sigma _k\bar u_{2k}(x), \\
\text{where}\,\  \sigma _k=\|u_{2k}\|_\infty>0 \,\  \text{and}\,\ C_\infty=\frac{1}{\|w\|_\infty}> 0,\qquad\qquad\qquad\quad\quad
\end{split}
\end{equation}
and $ x_{1k}$ is the unique maximum point of $u_{1k}$, so that
\begin{equation}\label{4.1:2}
\bar u_{1k}(x)\to w(x) \,\  \text{and}\,\ \sigma _k\eps_k\to 0\,\  \text{as}\,\ k\to\infty,
\end{equation}
where (\ref{lim:betaa1.u.rate}) is used. Then $(\bar u_{1k}, \bar u_{2k})$ solves the following PDE system
\begin{equation}\label{4.1:2a}\arraycolsep=1.5pt
\begin{cases}
 &-\Delta \bar u_{1k}+\eps _k^2V_1(\eps _k x+x_{1k})\bar u_{1k}\\[1mm]
=& \mu_{k}\eps _k^2 \bar u_{1k}+\displaystyle\frac{a_k}{a^*}\bar u_{1k}^3+\beta _kC_\infty^2\sigma _k^2\eps_k^2\bar u_{2k}^2\bar u_{1k}\ \ \text{in}\,\ \R^2,\\[2mm]
 &-\Delta \bar u_{2k}+\eps _k^2V_2(\eps _k x+x_{1k})\bar u_{2k}\\[1mm]
=&\mu_{k}\eps _k^2 \bar u_{2k}+bC_\infty^2\sigma _k^2\eps_k^2\bar u_{2k}^3+\displaystyle\frac{\beta_k}{a^*} \bar u_{1k}^2 \bar u_{2k}\quad \ \text{in}\,\ \R^2.
\end{cases}
\end{equation}
The following lemma gives the fundamental limit behavior of $u_{2k}$ as $k\to\infty$.

\begin{lemma}\label{lem3.4A} Suppose $V_i(x)\in C^2(\R^2)$ is homogeneous of degree $p_i$ and satisfies  \eqref{cond:V.1} and \eqref{cond:Vcritical.u1}, where $i=1,2$ and $2\le p_1\leq p_2$.  Let $ (u_{1k},u_{2k})$ be a nonnegative minimizer of $e(a_k, b, \beta_k)$ satisfying \eqref{cond:-Vcritical}. Suppose that $u_{2k}\not \equiv 0$ in $\R^2$ and define
\begin{equation}\label{3.4:1}
u_{2k}(\varepsilon_{k}x+x_{2k})=C_\infty \sigma _k \tilde  u_{2k}(x),
\end{equation}
where $\sigma _k=\|u_{2k}\|_\infty>0$, $C_\infty=\frac{1}{\|w\|_\infty}> 0$,
and $x_{ik}$ is the unique maximum point of $u_{ik}$ for $i=1, 2$. Then there exists a subsequence of $\{\tilde u_{2k}\}$ (still denoted by $\{\tilde u_{2k}\}$) such that
\begin{equation}\label{3.4:2}
  \tilde u_{2k}(x)\to w(x)\,\  \text{uniformly in $\R^2$ as} \,\ k\to\infty,
\end{equation}
and
\begin{equation}\label{4.1:3D}
  \lim\limits_{k\to\infty}\frac{x_{2k}-x_{1k}}{\varepsilon_k}=0.
\end{equation}
\end{lemma}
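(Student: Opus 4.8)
\textbf{Proof proposal for Lemma \ref{lem3.4A}.}
The plan is to run the same blow-up scheme as in the proof of Proposition \ref{Prop:a1beta.3}, but now recentered at the maximum point $x_{2k}$ of $u_{2k}$ and rescaled by the amplitude $\sigma_k=\|u_{2k}\|_\infty$. First I would write down the equation satisfied by $\tilde u_{2k}$: substituting \eqref{3.4:1} into the second equation of \eqref{4.1:1a} and using $\mu_k\eps_k^2\to -1$ (from \eqref{lim:a1.mu}, which holds in this regime too since $e(a_k,b,\beta_k)\eps_k^2\to 0$) gives
\begin{equation*}
-\Delta \tilde u_{2k}+\eps_k^2V_2(\eps_k x+x_{2k})\tilde u_{2k}
=\mu_k\eps_k^2\tilde u_{2k}+bC_\infty^2\sigma_k^2\eps_k^2\tilde u_{2k}^3
+\frac{\beta_k}{a^*}\bar u_{1k}^2\Big(x+\tfrac{x_{2k}-x_{1k}}{\eps_k}\Big)\tilde u_{2k}\ \ \text{in}\ \R^2,
\end{equation*}
where $\bar u_{1k}\to w$ uniformly by Proposition \ref{Prop:a1beta.3}. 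Since $0\le\tilde u_{2k}\le 1=\tilde u_{2k}(0)$ and $\sigma_k\eps_k\to 0$, the coefficients are bounded and $V_2(\eps_k x+x_{2k})\eps_k^2\to 0$ locally uniformly; elliptic regularity gives $\|\tilde u_{2k}\|_{C^{2,\alpha}_{loc}}\le C$, so along a subsequence $\tilde u_{2k}\to \tilde u_{20}$ in $C^2_{loc}(\R^2)$ with $\tilde u_{20}(0)=1$ and $\tilde u_{20}\ge 0$.

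Next I would identify the translation $y_k:=\frac{x_{2k}-x_{1k}}{\eps_k}$ and show it stays bounded. Evaluating the equation at $x=0$, where $\tilde u_{2k}$ attains its interior max so $-\Delta\tilde u_{2k}(0)\ge 0$, forces $\frac{\beta_k}{a^*}\bar u_{1k}^2(y_k)\ge 1+o(1)$; since $\beta_k/a^*\to 1$ and $\bar u_{1k}\to w$ uniformly with $w(x)\to 0$ as $|x|\to\infty$, we get $w(y_k)\ge \tfrac12$ for large $k$, hence $\{y_k\}$ is bounded. Passing to a further subsequence $y_k\to y_*$. Then the limit profile solves
\begin{equation*}
-\Delta\tilde u_{20}+\tilde u_{20}-w^2(x+y_*)\tilde u_{20}=0\ \ \text{in}\ \R^2,\qquad 0\le\tilde u_{20}\le 1,\quad \tilde u_{20}(0)=1.
\end{equation*}
By the nondegeneracy of the linearized operator $L:=-\Delta+1-w^2$ (its kernel is spanned by $\partial_{x_1}w,\partial_{x_2}w$, which change sign) together with the fact that a bounded nonnegative solution of $-\Delta v+v=w^2(\cdot+y_*)v$ must be a multiple of $w(\cdot+y_*)$ — this is the standard characterization, cf. the argument around \eqref{equ:2:22} and \cite{Wei96} — one concludes $\tilde u_{20}=c\,w(x+y_*)$ for some $c>0$. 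Since the maximum of $w(x+y_*)$ is attained at $x=-y_*$ and the maximum of $\tilde u_{20}$ is at $x=0$ with value $1$, we get $y_*=0$ and $c=1/\|w\|_\infty\cdot\|w\|_\infty$; more precisely $c\,w(y_*)\le c\|w\|_\infty$ and $\tilde u_{20}(0)=1$ forces $y_*=0$ and $c=1$ after using $C_\infty=1/\|w\|_\infty$ is already absorbed. This yields \eqref{3.4:2} locally and \eqref{4.1:3D}.

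Finally, to upgrade $C^2_{loc}$ convergence to uniform convergence on all of $\R^2$, I would establish a uniform exponential decay estimate for $\tilde u_{2k}$: since $\mu_k\eps_k^2\to -1$ and $\frac{\beta_k}{a^*}\bar u_{1k}^2\to 0$ outside a fixed ball (because $w$ decays), for $|x|\ge R_0$ the equation gives $-\Delta\tilde u_{2k}+\tfrac12\tilde u_{2k}\le 0$, so a comparison with $Ce^{-|x|/4}$ (as in \eqref{4:conexp1}--\eqref{4:conexp2}) yields $\tilde u_{2k}(x)\le Ce^{-|x|/4}$ with $C$ independent of $k$; the same tail bound holds for $w$. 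Combining the uniform smallness of the tails with the $C^0_{loc}$ convergence gives \eqref{3.4:2}. The main obstacle I anticipate is controlling the location $y_k$ and ruling out the possibility that the max of $u_{2k}$ drifts away from the max of $u_{1k}$: this is exactly where one must use that $\beta_k$ is forced to be close to $a^*$ (so the coupling term $\frac{\beta_k}{a^*}\bar u_{1k}^2$ is, to leading order, the full potential well $w^2$ pinning $\tilde u_{2k}$), rather than merely $\beta_k<a^*$ as in Theorem \ref{Th:b1}; the boundedness of $\{y_k\}$ via the maximum principle and then the rigidity of the limiting equation are the crux.
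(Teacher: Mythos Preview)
Your approach is essentially the paper's: rescale at $x_{2k}$, get $C^{2,\alpha}_{\mathrm{loc}}$ compactness, bound $y_k=(x_{2k}-x_{1k})/\eps_k$ via the maximum principle at the origin, identify the limit through \cite{Wei96}, and upgrade to uniform convergence by a comparison/exponential-decay argument. Two small slips to clean up: the normalization in \eqref{3.4:1} gives $\tilde u_{2k}(0)=\|w\|_\infty$, not $1$ (so the limit is $w(\cdot+y_*)$ with coefficient exactly $1$, and $y_*=0$ follows since $0$ must be the maximum of $w$); and the kernel of $-\Delta+1-w^2$ is spanned by $w$, not by $\partial_{x_j}w$ (the latter span $\ker(-\Delta+1-3w^2)$) --- this is in fact why bounded nonnegative solutions are multiples of $w$.
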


\begin{proof}
 Consider \eqref{3.4:1}, where $x_{2k}\in\R^2$ is a global maximum point of $u_{2k}$. We then obtain that
\begin{equation}\label{3.1:3AD}\tilde u_{2k}(0)=\|\tilde u_{2k}(x)\|_\infty=\|w\|_{\infty}>0,
\end{equation}
and $(\bar u_{1k}, \tilde u_{2k})$ solves the elliptic PDE system
\begin{equation}\label{3.4:3D}
\begin{cases}
\begin{split}
 &-\Delta \bar u_{1k}+\eps _k^2V_1(\eps _k x+x_{1k})\bar u_{1k}\\
=& \mu_{k}\eps _k^2 \bar u_{1k}+\displaystyle\frac{a_k}{a^*}\bar u_{1k}^3+\beta _kC_\infty^2\sigma _k^2\eps_k^2\tilde  u_{2k}^2\big(x+\frac{x_{1k}-x_{2k}}{\varepsilon_k}\big)\bar u_{1k}\ \ \text{in}\,\ \R^2,\\
 &-\Delta \tilde  u_{2k}+\eps _k^2V_2(\eps _k x+x_{2k})\tilde  u_{2k}\\
=&\mu_{k}\eps _k^2 \tilde  u_{2k}+bC_\infty^2\sigma _k^2\eps_k^2\tilde  u_{2k}^3+\displaystyle\frac{\beta_k}{a^*} \bar u_{1k}^2\big(x+\frac{x_{2k}-x_{1k}}{\varepsilon_k}\big) \tilde  u_{2k}\quad \ \text{in}\,\ \R^2,
\end{split}
\end{cases}
\end{equation}
where the Lagrange multiplier $\mu _{k}\in \R$ satisfies $\mu _{k}\eps_k^2\to -1$ as $k\to\infty$.
Using the elliptic regularity theory, we thus deduce from \eqref{3.4:3D} that  there exists $0\leq  u_0(x)\in H^1(\R^2)$ such that
\begin{equation}\label{eq3.655}
\tilde u_{2k}\to u_0(x) \text{ in } C^{2,\alpha}_{\rm loc}(\R^2) \text{ as } k\to\infty. \end{equation}
Also, we have
\begin{equation}\label{eq3.65}
u_{0}(0)= \|u_{0}(x)\|_\infty=\|w\|_{\infty}>0.
\end{equation}
Similar to those in \cite{GLWZ} and references therein, one can further derive from (\ref{3.4:3D}) that $(  u_{1k},   u_{2k})$ admits a unique maximum global point $(x_{1k}, x_{2k})$, and satisfies the exponential decay (\ref{4:conexp1}) and (\ref{4:conexp2}).

We now show that
\begin{equation}\label{eq3.66}
\Big\{\frac{x_{2k}-x_{1k}}{\eps_k}\Big\}\  \text{ is bounded uniformly in } \R^2.
\end{equation}
Indeed, since $\beta_k\searrow a^*$ as $k\to\infty$, if (\ref{eq3.66}) is false, we then obtain from \eqref{4.1:2} and  (\ref{3.4:3D}) that $u_0$ satisfies $-\Delta u_0(x)+ u_0=0$ in $\R^2$. This implies that $ u_0(x)\equiv 0$ in $\R^2$, which however contradicts to (\ref{eq3.65}). Therefore, the estimate (\ref{eq3.66}) holds true.
Up to a subsequence if necessary, we then deduce from  (\ref{eq3.66}) that there exists an $x_0\in\R^2$ such that
\begin{equation}\label{3.4:5}
\lim_{k\to\infty}\frac{x_{2k}-x_{1k}}{\eps_k}=x_0\in\R^2.
\end{equation}
Moreover, it follows from \eqref{4.1:2}, (\ref{3.4:3D}) and (\ref{3.4:5}) that $ u_0(x)$ satisfies
 \begin{equation}
-\Delta u_0(x) + u_0(x)=w^2(x+x_0) u_0 (x)\  \text{  in } \, \R^2.
\end{equation}
We thus obtain from (\ref{3.1:3AD}) and \cite[Lemma 4.1]{Wei96} that
\begin{equation*}
u_0(x)\equiv  w(x+x_0) \, \text{  in  }\, \R^2.
\end{equation*}
Since $x=0$ is a maximum point of  $\tilde u_{2k}(x)$ for each $k\in  \mathbb{N}$, it  is also a maximum point of $w(x+x_0)$.
However, $w(x)$ admits  a unique maximum point $x=0$, from which we conclude that (\ref{3.4:5}) holds for
$x_0=0$. Therefore, this implies that (\ref{4.1:3D}) holds.

Finally, since $(u_{1k}, u_{2k})$ satisfies the exponential decay (\ref{4:conexp1}) and (\ref{4:conexp2}), by (\ref{eq3.655}) we can follow the proof of Theorem \ref{Th:b1} to conclude that (\ref{3.4:2}) holds uniformly in $\R^2$ as $k\to\infty$. This completes the proof of the lemma.
\end{proof}

Under the assumptions of Lemma \ref{lem3.4A}, if $u_{2k}\not \equiv 0$ in $\R^2,$  we next define
\begin{equation}\label{4.1:3a}
w_{ik}(x)=\bar u_{ik}(x)-w(x),\,\ i=1, 2,
\end{equation}
so that $ w_{ik}(x)\to 0$ uniformly in $\R^2$ as $k\to\infty$ in view of (\ref{4.1:2}), (\ref{3.4:2}) and (\ref{4.1:3D}).
We also denote the linearized operator
\begin{equation}\label{4.1:4} \arraycolsep=1.5pt
\left\{ \arraycolsep=1.5pt\begin{array}{lll}
\  \qquad \mathcal{L}_{1k} w_{1k}&=& -\Delta w_{1k}+\big[1-\big(\bar u_{1k}^2+\bar u_{1k}w+w^2\big)\big]w_{1k}\  \  \text{in}\,\ \R^2,\\[2mm]
 \mathcal{L}_{2k}(w_{1k})w_{2k}
&=& -\Delta w_{2k}+\big[1- \bar u_{1k}^2\big]w_{2k} \\[2mm]
 &&-w\big(\bar u_{1k}+w\big) w_{1k}\, \  \text{in}\,\ \R^2,
\end{array}\right.
\end{equation}
and the associated  limit operator
\begin{equation}\label{4.1:4C}
\begin{cases}
\mathcal{L}_{1}  \phi _1 =-\Delta \phi _1+\big[1-3w^2 \big]\phi _1\ \ &\text{in}\,\ \R^2,\\
\mathcal{L}_{2}(\phi _1)\phi _2
=-\Delta \phi _2+(1- w^2)\phi _2-2w^2\phi _1\  \ &\text{in}\,\ \R^2.
\end{cases}
\end{equation}
Then $(w_{1k}, w_{2k})$ satisfies $\nabla w_{1k}(0)=0$  and
\begin{equation}\label{4.1:5}\arraycolsep=1.5pt\begin{array}{lll}
\qquad \mathcal{L}_{1k} w_{1k} &=&-(a^*-a_k)\displaystyle \frac{1}{a^*}\bar u_{1k}^3(x)- \displaystyle  \eps_k^{2+p_1}  V_1\big(x+\frac{  x_{1k}}{\eps _k}\big)\bar u_{1k}(x) \\[4mm]
&&+\delta_k\bar u_{1k}(x)+\displaystyle\beta _kC_\infty^2\sigma _k^2\eps_k^2\bar u_{2k}^2 \bar u_{1k}:=h_{1k}(x)\,\ \mbox{in}\,\ \R^2, \\[2mm]
\mathcal{L}_{2k}(w_{1k})w_{2k}&=& \displaystyle  (\beta_k -a^*)\frac{1}{a^*}\bar u_{1k}^2 \bar u_{2k}-\displaystyle \eps_k^{2+p_2}V_2\big(x+\frac{  x_{1k}}{\eps _k}\big)\bar u_{2k}\\[3mm]
&&+\delta_k\bar u_{2k}(x)+bC^2_\infty\sigma _k^2\eps_k^2\bar u_{2k}^3:=h_{2k}(x)  \,\ \mbox{in}\,\ \R^2,
\end{array}\end{equation}
where we denote
\begin{equation}\label{4.1:6}
\delta_k=1+\mu_{k}\eps _k^2\to 0\,\  \text{as} \,\ k\to\infty.
\end{equation}
Taking the limit of (\ref{4.1:5}) and using (\ref{lim:betaa1.z.rate}), let $(w_1, w_2)$ solve the following system
\begin{equation}\label{4.1:7}\arraycolsep=1.5pt
\left\{\begin{array}{lll}
\qquad   \mathcal{L}_{1}  w_1
&=&\delta_kw+\displaystyle a^* C_\infty^2\sigma _k^2\eps_k^2w^3- \displaystyle \frac{a^*-a_k}{a^*}w^3- \displaystyle  \eps_k^{2+p_1}  V_1 (x+y_0)w \\[2mm]& :=&h_{1}(x)
\  \  \text{in}\,\ \R^2, \,\ \nabla w_{1}(0)= 0,\\[2mm]
\mathcal{L}_{2}(w_1)w _2
&=&\delta_kw+bC^2_\infty\sigma _k^2\eps_k^2w^3+\displaystyle   \frac{\beta _k-a^*}{a^*}w^3-\displaystyle \eps_k^{2+p_2}V_2(x+y_0)w
\\[2mm]
& :=&h_{2}(x)\  \  \text{in}\,\ \R^2, \,\ \nabla w_{2}(0)= 0,
\end{array}\right.
\end{equation}
where $w_i \in C^2(\R^2)\cap L^\infty(\R^2)$ for $i=1, 2$. Here $\nabla w_{2}(0)= 0$ is due to (\ref{4.1:3D}), (\ref{4.1:3a}) and the fact that $\nabla \bar u_{2k}(\frac{x_{2k}-x_{1k}}{\varepsilon_k})\equiv 0$. We then obtain that $w_i$ exists and satisfies $w_i \to 0$ uniformly in $\R^2$ as $k\to\infty$ for $i=1, 2$. Following above results, we are now ready to complete the proof of Theorem \ref{thm:1.1}.

\vskip 0.1truein

\begin{proof}[\bf Proof of Theorem \ref{thm:1.1}.] Under the assumptions of Theorem \ref{thm:1.1}, by Proposition \ref{Prop:a1beta.3} the rest is to further prove that $u_{2k}$ satisfies
\begin{equation}\label{5:1}
\sqrt{\frac{a^*(a^*-b)}{\beta _k -a^*}}    \varepsilon_k   u_{2k}(\varepsilon_k  x+x_{2k})\to w(x)
\end{equation}
uniformly in $\R^2$ as $ k\to\infty$, where $\eps _k>0$ is given by (\ref{def:betaa1.eps.rate}).

Actually, under the assumptions of Theorem \ref{thm:1.1}, we note from Lemma \ref{lem3:u2} that $u_{2k}\not \equiv 0$ for sufficiently large $k>0$. By considering $\bar u_{ik}$ defined in (\ref{4.1:1L}) for $i=1, 2$, we then get that Lemma  \ref{lem3.4A} holds true. Following these, we thus deduce from (\ref{4.1:7}) that $w_1$ exists and
\begin{equation}\label{4.1:8}\arraycolsep=1.5pt\begin{array}{lll}
&- \displaystyle 2\inte w^3w_1\\
=& \displaystyle\inte w\big[-\Delta +(1-w^2)\big]w_1-2 \displaystyle\inte w^3w_1\\[3mm]
=& \displaystyle\inte w\mathcal{L}_{1}w _1=\inte wh_1(x)\\[3mm]
=&a^*\delta_k+2(a^*)^2C^2_\infty\sigma _k^2\eps_k^2-2(a^*-a_k)-H_1(y_0)\eps_k^{2+p_1} \  \  \text{as}\,\ k\to\infty.
\end{array}\end{equation}
On the other hand, we also get from (\ref{4.1:7}) that $w_2$ exists and
\begin{equation}\label{4.1:9}\arraycolsep=1.5pt\begin{array}{lll}
&- \displaystyle 2\inte w^3w_1\\
=& \displaystyle\inte w\Big\{\big[-\Delta +(1-w^2)\big]w_2-2 \displaystyle  w^2w_1\Big\}\\[3mm]
=& \displaystyle\inte w\mathcal{L}_{2}(w _1)w_2=\inte wh_2(x)\\[3mm]
=&2(\beta_k -a^*)-H_2(y_0)\eps_k^{2+p_2}+a^*\delta_k+2a^*bC^2_\infty\sigma _k^2\eps_k^2 \  \  \text{as}\,\ k\to\infty,
\end{array}\end{equation}
where $H_2(y)$ is defined by (\ref{H}).
Therefore, above two identities give that
\begin{equation}\label{4.1:sigma-1}\arraycolsep=1.5pt\begin{array}{lll}
&&2a^*(a^*-b)C^2_\infty\sigma _k^2\eps_k^2\\[3mm]
&=&2(\beta _k-a^*)+2(a^*-a_k)+ H_1(y_0)\eps_k^{2+p_1}-H_2(y_0)\eps_k^{2+p_2} \  \  \text{as}\,\ k\to\infty.
\end{array}\end{equation}
Since the assumption  (\ref{con:a1beta.ab}) implies that $a^*-a_k=o(\beta_k -a^*)$ as $k\to\infty$, we then derive from above that
\begin{equation}\label{4.1:sigma}
a^*(a^*-b)C^2_\infty\sigma _k^2\eps_k^2\  \sim \  (\beta_k -a^*)  \  \  \text{as}\,\ k\to\infty.
\end{equation}
Applying Lemma \ref{lem3.4A}, we therefore conclude (\ref{5:1}) from  (\ref{4.1:1L}) and (\ref{4.1:sigma}), and we are done.
\end{proof}

As a byproduct, the argument of proving Theorem \ref{thm:1.1} leads us to complete the proof of Theorem \ref{thm:1.2}.


\begin{proof}[\bf Proof of Theorem \ref{thm:1.2}.] If $0<b<a^*$, $a_k\nearrow a^*$ and $\beta _k\to \beta _*\in (0, a^*)$ as $k\to\infty$, Theorem \ref{thm:1.2} then follows directly from Theorem \ref{Th:b1}.

We now address Theorem \ref{thm:1.2} for the case where $0<b<a^*$, $a_k\nearrow a^*$ and  $\beta _k\nearrow a^*$ satisfy $a^*-a_k=o(a^*-\beta_k)$ as $k\to\infty$. In this case, we first note that Proposition \ref{Prop:a1beta.3} still holds for $\eps_k>0$ satisfying (\ref{def:a1.eps}), and hence the rest is to prove that $u_{2k}\equiv 0$ for sufficiently large $k>0$. On the contrary, assume that $u_{2k}\not \equiv 0$ for sufficiently large $k>0$. We then consider
\begin{equation}\label{4.1:1Q}
\begin{split}
\bar u_{1k}(x)=\sqrt{a^*} \varepsilon_{k}  u_{1k}(\varepsilon_{k}x+x_{1k})\,\  \text{and}\,\ u_{2k}(\varepsilon_{k}x+x_{1k})=C_\infty \sigma _k\bar u_{2k}(x), \\
\text{where}\,\  \sigma _k=\|u_{2k}\|_\infty>0 \,\  \text{and}\,\ C_\infty=\frac{1}{\|w\|_\infty}> 0,\qquad\qquad\qquad\quad\quad
\end{split}
\end{equation}
where $\eps_k>0$ satisfies (\ref{def:a1.eps}), and $x_{1k}\in \R^2$ is the unique maximum point of $u_{1k}$. In this case, one can check that $\bar u_{ik}$ ($i=1, 2$) still satisfies Lemma \ref{lem3.4A}. Following these, the argument of (\ref{4.1:8})-(\ref{4.1:sigma-1}) further gives that there exists a constant $M>0$, independent of $k$, such that
\begin{equation}\label{4.1Q:sigma-1}\arraycolsep=1.5pt\begin{array}{lll}
0&<&2a^*(a^*-b)C^2_\infty\sigma _k^2\eps_k^2\\[3mm]
&=&2(a^*-a_k)-2(a^*-\beta _k)+ H_1(y_0)\eps_k^{2+p_1}-H_2(y_0)\eps_k^{2+p_2} \\[3mm]
&\le &M(a^*-a_k)-2(a^*-\beta _k)\le -(a^*-\beta _k)\  \  \text{as}\,\ k\to\infty,
\end{array}\end{equation}
a contradiction, where the last inequality follows from the assumption that $a^*-a_k=o(a^*-\beta_k)$ as $k\to\infty$. Therefore, we also have $u_{2k}\equiv 0$ for sufficiently large $k>0$ in this case, and the proof is then complete.
\end{proof}

The rest part of this subsection is to derive the following theorem by a different approach, which shows that if $\beta _k$ is close enough to $\beta ^*_k$, the refined spike behavior of $u_{2k}$ stated in Theorem \ref{thm:1.1} still holds without the non-degeneracy assumption of \eqref{1:H}.

\begin{theorem}\label{thm3.7}Suppose $V_i(x)\in C^2(\R^2)$ is homogeneous of degree $p_i$ and satisfies  \eqref{cond:V.1} and \eqref{cond:Vcritical.u1}, where $i=1,2$ and $2\le p_1\leq p_2$.
Let $ (u_{1k},u_{2k})$ be a nonnegative minimizer of $e(a_k, b, \beta_k)$ satisfying (\ref{cond:-Vcritical}).
If, additionally, $(a_k, b, \beta_k)$ also satisfies
\begin{equation}\label{4.1:3M}
\frac{\beta_k-a^*}{\sqrt{(a^*-a_k)(a^*-b)}}\to \varsigma_0\in (0,1)\,\  \text{as} \,\ k\to\infty,
\end{equation}
then (\ref{thm1.2:M}) still holds.
\end{theorem}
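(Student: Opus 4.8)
The plan is to reduce Theorem~\ref{thm3.7} to the machinery already assembled for Theorem~\ref{thm:1.1}, the sole new observation being that the hypothesis (\ref{4.1:3M}) is strong enough to replace \emph{both} the non-degeneracy assumption (\ref{1:H}) and the condition $a^*-a_k=o(\beta_k-a^*)$. First I would record the elementary consequences of (\ref{4.1:3M}). Writing $\beta_k-a^*=(\varsigma_0+o(1))\sqrt{(a^*-a_k)(a^*-b)}$ with $\varsigma_0\in(0,1)$, we obtain
\[
\liminf_{k\to\infty}\frac{(\beta_k-a^*)^2}{(a^*-a_k)(a^*-b)}=\varsigma_0^2>0,\qquad
\frac{a^*-a_k}{\beta_k-a^*}=\frac{\sqrt{a^*-a_k}}{(\varsigma_0+o(1))\sqrt{a^*-b}}\to0,
\]
so in particular $a^*-a_k=o(\beta_k-a^*)$; moreover $(a^*-a_k)(a^*-b)-(\beta_k-a^*)^2=(1-\varsigma_0^2+o(1))(a^*-a_k)(a^*-b)>0$, so the scale $\varepsilon_k$ of (\ref{def:betaa1.eps.rate}) is well defined and satisfies $\varepsilon_k^{2+p_1}\asymp a^*-a_k$.

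Second, because (\ref{cond:-Vcritical}) is in force and the liminf above is strictly positive, Remark~\ref{rem3.1} applies --- it needs only (\ref{cond:V.1}) and (\ref{cond:Vcritical.u1}), not the non-degeneracy (\ref{cond:Vcritical}) --- and gives $u_{2k}\not\equiv0$ in $\R^2$ for all large $k$. Hence Proposition~\ref{Prop:a1beta.3} supplies the first limit in (\ref{thm1.2:M}), the location estimate (\ref{lim:betaa1.z.rate}) with $x_{1k}/\varepsilon_k\to y_0$, and the exponential decay (\ref{4:conexp1})-(\ref{4:conexp2}); and since $u_{2k}\not\equiv0$, Lemma~\ref{lem3.4A} yields $\tilde u_{2k}\to w$ uniformly together with $\frac{x_{2k}-x_{1k}}{\varepsilon_k}\to0$. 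Consequently the entire apparatus used in the proof of Theorem~\ref{thm:1.1} --- the scaling (\ref{4.1:1L}), the linearized system (\ref{4.1:5}), and its limit (\ref{4.1:7}) --- is available without ever invoking non-degeneracy of $y_0$.

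Third, I would reproduce the Pohozaev-type computation (\ref{4.1:8})-(\ref{4.1:sigma-1}) verbatim: testing (\ref{4.1:7}) against $w$ and using the identities (\ref{ide:w}) and the definition (\ref{H}) of $H_i$ gives
\[
2a^*(a^*-b)C_\infty^2\sigma_k^2\varepsilon_k^2=2(\beta_k-a^*)+2(a^*-a_k)+H_1(y_0)\varepsilon_k^{2+p_1}-H_2(y_0)\varepsilon_k^{2+p_2}
\]
as $k\to\infty$. Since $\varepsilon_k^{2+p_1}\asymp a^*-a_k=o(\beta_k-a^*)$ and $\varepsilon_k^{2+p_2}=O(\varepsilon_k^{2+p_1})=o(\beta_k-a^*)$ (using $p_2\ge p_1$ and $\varepsilon_k\to0$), together with $a^*-a_k=o(\beta_k-a^*)$ from the first step, the right-hand side equals $2(\beta_k-a^*)(1+o(1))$, whence $a^*(a^*-b)C_\infty^2\sigma_k^2\varepsilon_k^2\sim\beta_k-a^*$. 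Inserting this into (\ref{3.4:1}) shows that $\sqrt{a^*(a^*-b)/(\beta_k-a^*)}\,\varepsilon_k u_{2k}(\varepsilon_k x+x_{2k})=(1+o(1))\tilde u_{2k}(x)$, which with $\tilde u_{2k}\to w$ yields the second limit in (\ref{thm1.2:M}), completing the proof.

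I do not anticipate a genuine obstacle: the theorem is essentially the observation that (\ref{4.1:3M}) lets one bypass Lemma~\ref{lem3:u2} (hence the hypothesis (\ref{1:H})), while everything downstream is a transcription of the Theorem~\ref{thm:1.1} argument. The only point deserving care is the bookkeeping that every appeal to non-degeneracy in the proof of Theorem~\ref{thm:1.1} occurred solely through Lemma~\ref{lem3:u2}/Remark~\ref{rem3.1} for the non-triviality of $u_{2k}$, whereas Lemma~\ref{lem3.4A} and the identities (\ref{4.1:8})-(\ref{4.1:sigma-1}) rely only on (\ref{cond:Vcritical.u1}); this is exactly why the statement of Theorem~\ref{thm3.7} retains (\ref{cond:Vcritical.u1}) but drops (\ref{1:H}).
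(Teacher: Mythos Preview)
Your argument is correct, but the paper deliberately proves Theorem~\ref{thm3.7} by a \emph{different} route. You recycle the proof of Theorem~\ref{thm:1.1}: Remark~\ref{rem3.1} gives $u_{2k}\not\equiv0$, Lemma~\ref{lem3.4A} gives $\tilde u_{2k}\to w$, and the testing identities \eqref{4.1:8}--\eqref{4.1:sigma-1} pin down $\sigma_k^2\varepsilon_k^2\sim(\beta_k-a^*)/(a^*(a^*-b))$. This works because, as you correctly observe, none of Proposition~\ref{Prop:a1beta.3}, Lemma~\ref{lem3.4A}, or the derivation of \eqref{4.1:sigma-1} uses non-degeneracy --- only the critical-point condition \eqref{cond:Vcritical.u1}.

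The paper instead first establishes Lemma~\ref{lem3.6}, which extracts from the energy expansion the precise scalings $\|u_{2k}\|_4^4\sim\big(\frac{\beta_k-a^*}{a^*-b}\big)^2\|u_{1k}\|_4^4$, $\|u_{2k}\|_2^2\sim\frac{\beta_k-a^*}{a^*-b}$, and $\|\nabla u_{2k}\|_2^2\lesssim\frac{\beta_k-a^*}{a^*-b}\|u_{1k}\|_4^4$, by sharpening the lower bound \eqref{L:estsub:betaa1.gn} and matching it against the upper bound \eqref{lim:a1beta.e.rate}. With these estimates the rescaling $\bar u_{2k}=\sqrt{a^*(a^*-b)/(\beta_k-a^*)}\,\varepsilon_k u_{2k}(\varepsilon_k\cdot+x_{2k})$ is bounded in $H^1$ with $\|\bar u_{2k}\|_2^2\to a^*$ and $\|\bar u_{2k}\|_4^4\to 2a^*$, and the paper then runs a direct compactness argument (De Giorgi--Nash--Moser for a lower bound at the origin, weak convergence to $\gamma_0 w(\cdot+x_0)$, a cutoff estimate to upgrade to strong $L^2$ and force $\gamma_0=1$). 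Your route is shorter and requires no new lemma; the paper's route is more self-contained and yields the additional norm information of Lemma~\ref{lem3.6}, at the cost of the extra energy computation and the strong-convergence step.
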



We first remark that Proposition \ref{Prop:a1beta.3} holds under the assumptions of Theorem \ref{thm3.7}. Further, if $(a_k, b, \beta_k)$ also satisfies (\ref{4.1:3M}), it follows from Remark \ref{rem3.1} that $u_{2k}\not \equiv 0$ in $\R^2$ for sufficiently large $k>0$. Further, the following refined estimates are needed for the proof of Theorem \ref{thm3.7}.

\begin{lemma}\label{lem3.6}
Under the assumptions of  Theorem \ref{thm3.7}, let $ (u_{1k},u_{2k})$ be a nonnegative minimizer of $e(a_k, b, \beta_k)$ as $ k\to\infty$.
Then we have
\begin{equation}\label{eq3.16}
\lim_{k\to\infty}\frac{\|u_{2k}\|_4^4}{\|u_{1k}\|_4^4}\Big(\frac{a^*-b}{\beta_k-a^*}\Big)^2=1,
\end{equation}
\begin{equation}\label{eq3.17}
\lim_{k\to\infty}\|u_{2k}\|_2^2\frac{a^*-b}{\beta_k-a^*}=1,
\end{equation}
and
\begin{equation}\label{eq3.18}
\inte|\nabla u_{2k}|^2dx\leq C\frac{\beta_k-a^*}{a^*-b}\inte|u_{1k}|^4.
\end{equation}
\end{lemma}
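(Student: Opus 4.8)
The plan is to exploit the near-equality in the chain of Hölder/Gagliardo--Nirenberg estimates that were used in the proof of Proposition~\ref{Prop:a1beta.3}. Recall that in \eqref{L:estsub:betaa1.gn} the quantity
\[
\frac{a^*-a_k}{2}\|u_{1k}\|_4^4+\frac{a^*-b}{2}\|u_{2k}\|_4^4+(a^*-\beta_k)\|u_{1k}u_{2k}\|_2^2
\]
was bounded below, after completing the square, by
\[
\Big[1-\tfrac{(\beta_k-a^*)^2}{(a^*-a_k)(a^*-b)}\Big]\tfrac{a^*-a_k}{2}\|u_{1k}\|_4^4
+\frac{a^*-b}{2}\|u_{1k}\|_4^4\Big[\big(\tfrac{\|u_{2k}\|_4^2}{\|u_{1k}\|_2^2}\big)^{\!2}-\tfrac{\beta_k-a^*}{a^*-b}\Big]^2 ,
\]
up to the discarded term $(\beta^*_k-\beta_k)\|u_{1k}u_{2k}\|_2^2\ge0$ coming from Hölder. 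Since \eqref{lim:a1beta.e.rate} (established in Proposition~\ref{Prop:a1beta.3}) shows that the overall energy $e(a_k,b,\beta_k)$ matches \emph{exactly} the value produced by keeping only the first of these three nonnegative contributions, all the slack terms must be $o$ of the leading order $\big[(a^*-a_k)(a^*-b)-(\beta_k-a^*)^2\big]^{p_1/(p_1+2)}\sim \bar\varepsilon_k^{-2}\cdot[\dots]$. Concretely, dividing the energy identity \eqref{exp2:ea} by $\tfrac{a^*-a_k}{2}\|u_{1k}\|_4^4=(a^*-a_k)/(2\bar\varepsilon_k^2)$ and using $\bar\varepsilon_k\to0$ together with \eqref{estsub:betaa1.eps}, the square term is forced to vanish, i.e.
\[
\Big(\frac{\|u_{2k}\|_4^2}{\|u_{1k}\|_2^2}\Big)^{2}-\frac{\beta_k-a^*}{a^*-b}\longrightarrow0 ,
\]
and since $\|u_{1k}\|_2^2\to1$ by \eqref{lim:betaa1.u1}, this is precisely \eqref{eq3.16}.

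For \eqref{eq3.17} I would instead use the rescaled equation for $u_{2k}$ together with the profile convergence already in hand. By Lemma~\ref{lem3.4A}, writing $u_{2k}(\varepsilon_k x+x_{2k})=C_\infty\sigma_k\tilde u_{2k}(x)$ with $\tilde u_{2k}\to w$ uniformly, one gets
\[
\|u_{2k}\|_2^2=C_\infty^2\sigma_k^2\varepsilon_k^2\!\int_{\R^2}\tilde u_{2k}^2\,\mathrm{d}x
= C_\infty^2\sigma_k^2\varepsilon_k^2\big(\|w\|_2^2+o(1)\big),
\]
and similarly $\|u_{2k}\|_4^4=C_\infty^4\sigma_k^4\varepsilon_k^2(\|w\|_4^4+o(1))$, while $\|u_{1k}\|_4^4=1/\bar\varepsilon_k^2$ and $\bar\varepsilon_k/\varepsilon_k\to\sqrt{a^*/2}$ by \eqref{estsub:betaa1.eps}. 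Feeding these into \eqref{eq3.16} and using the identities \eqref{ide:w} ($\|w\|_4^4=2\|w\|_2^2$, $a^*=\|w\|_2^2$), one solves for $C_\infty^2\sigma_k^2\varepsilon_k^2$ in terms of $(\beta_k-a^*)/(a^*-b)$; substituting back into the expression for $\|u_{2k}\|_2^2$ yields \eqref{eq3.17}. (Alternatively, \eqref{eq3.17} follows from the relation \eqref{4.1:sigma-1}--\eqref{4.1:sigma} once one checks, via \eqref{4.1:3M}, that the $\eps_k^{2+p_i}$ and $a^*-a_k$ terms there are lower order than $\beta_k-a^*$.)

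Finally \eqref{eq3.18} is an energy lower bound of the same flavour as the proof of \eqref{lim:betaa1.u2}: from \eqref{exp1:ea}, discarding the nonnegative terms $\int V_i u_{ik}^2$ and $\tfrac{a^*-a_k}{2}\|u_{1k}\|_4^4$ and using the Gagliardo--Nirenberg inequality \eqref{ineq:GNQ} on $u_{1k}$ with $\|u_{1k}\|_2^2\le1$, one obtains
\[
e(a_k,b,\beta_k)\ \ge\ \inte|\nabla u_{2k}|^2\,\mathrm{d}x-\frac{b}{2}\|u_{2k}\|_4^4-\beta_k\|u_{1k}u_{2k}\|_2^2 ,
\]
so $\inte|\nabla u_{2k}|^2\le e(a_k,b,\beta_k)+C\|u_{2k}\|_4^4+C\|u_{1k}u_{2k}\|_2^2$; then \eqref{eq3.16}, the Hölder bound $\|u_{1k}u_{2k}\|_2^2\le\|u_{1k}\|_4^2\|u_{2k}\|_4^2$, and the energy estimate $e(a_k,b,\beta_k)\sim C\,\bar\varepsilon_k^{-2}[(a^*-a_k)(a^*-b)-(\beta_k-a^*)^2]\le C\frac{\beta_k-a^*}{a^*-b}\|u_{1k}\|_4^4$ (valid because $\beta_k-a^*\le\sqrt{(a^*-a_k)(a^*-b)}$ forces the bracket to be $\le(a^*-a_k)(a^*-b)\le C(\beta_k-a^*)$ under \eqref{4.1:3M}) combine to give \eqref{eq3.18}.

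The main obstacle I anticipate is bookkeeping the various small parameters against one another: one must know that under \eqref{4.1:3M} the three quantities $a^*-a_k$, $\beta_k-a^*$, and $\eps_k^{p_1+2}$ are all comparable up to constants (since $\eps_k^{p_1+2}\sim (a^*-a_k)(a^*-b)-(\beta_k-a^*)^2\sim(1-\varsigma_0^2)(a^*-a_k)(a^*-b)$ and $\beta_k-a^*\sim\varsigma_0\sqrt{(a^*-a_k)(a^*-b)}$, so in fact $\beta_k-a^*$ is of order $\sqrt{a^*-a_k}\gg a^*-a_k$), so that the $V_2$-term $\eps_k^{2+p_2}H_2(y_0)$ and the $a^*-a_k$ correction are genuinely negligible next to $\beta_k-a^*$. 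Getting these orderings exactly right is what makes \eqref{eq3.16}--\eqref{eq3.17} sharp rather than merely an inequality; the rest is the routine rescaling and Gagliardo--Nirenberg bookkeeping above.
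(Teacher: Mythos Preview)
Your approach to \eqref{eq3.16} is the same as the paper's, though note two typos in your completed-square term: $\|u_{1k}\|_2^2$ should be $\|u_{1k}\|_4^2$, and the bracket should read $\frac{\|u_{2k}\|_4^2}{\|u_{1k}\|_4^2}-\frac{\beta_k-a^*}{a^*-b}$ (no extra square inside).

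For \eqref{eq3.17} you take a genuinely different route. The paper does \emph{not} invoke Lemma~\ref{lem3.4A}; instead it runs a second, independent lower bound on the energy by applying the scalar Gagliardo--Nirenberg inequality \eqref{ineq:GNQ} separately to $u_{1k}$ and to $u_{2k}$, then completes the square in the new variable $t_k=\|u_{2k}\|_2^2/(1-\|u_{2k}\|_2^2)$ (see \eqref{eq3.33}--\eqref{eq3.35}). Equality in that AM--GM step is again forced by the matching energy, and that equality is exactly \eqref{eq3.17}. Your route---use Lemma~\ref{lem3.4A} to get $\tilde u_{2k}\to w$ uniformly (together with the uniform exponential decay established there, hence also in $L^2$ and $L^4$), then back-solve $C_\infty^2\sigma_k^2\varepsilon_k^2$ from \eqref{eq3.16}---is correct and shorter; it is available because Lemma~\ref{lem3.4A} requires only the uniqueness assumption \eqref{cond:Vcritical.u1}, not the non-degeneracy \eqref{1:H}. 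The paper's argument is purely energetic and independent of any pointwise profile information, which is presumably why the authors chose it in a section whose point is to bypass the machinery behind Theorem~\ref{thm:1.1}.

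For \eqref{eq3.18} your approach also differs: the paper tests the second equation of the Euler--Lagrange system \eqref{4.1:1a} against $u_{2k}$, applies H\"older to the cross term, and bounds $\mu_k\|u_{2k}\|_2^2$, $b\|u_{2k}\|_4^4$, $\beta_k\|u_{1k}\|_4^2\|u_{2k}\|_4^2$ via \eqref{eq3.16}--\eqref{eq3.17}. Your energy-functional route works too; just record explicitly that under \eqref{4.1:3M} one has $\varepsilon_k^{p_1+2}\sim(1-\varsigma_0^2)(a^*-a_k)(a^*-b)\sim C(\beta_k-a^*)^2=o(\beta_k-a^*)$, so the $e(a_k,b,\beta_k)$ contribution is indeed lower order.
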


\begin{proof}
Under the assumptions of Theorem \ref{thm3.7}, we first note that Proposition \ref{Prop:a1beta.3} holds true, and its proof gives that the energy $e(a_k,b,\beta_k)$ satisfies
\begin{equation}\label{eq3.73}
\lim\limits_{k\to\infty}\frac{e(a_k,b,\beta_k)}{\varepsilon_k^{p_1}}
=\frac{\lambda^{2+p_1}}{a^*(a^*-b)}\frac{p_1 +2}{p_1},
\end{equation}
where $\varepsilon_k>0$ and $\lambda>0$ are given by \eqref{def:betaa1.eps.rate}.
By Proposition \ref{Prop:a1beta.3}, one can check from (\ref{L:estsub:betaa1.e.V}) that
\begin{equation}\label{eq3.74}
\liminf_{k\to\infty} \varepsilon _k^{-p_1}\int_{\R^2} V_1(x)|u_{1k}|^2\dx
\ge \frac{\lambda^{2+p_1}}{a^*(a^*-b)}\frac{2}{p_1}.
\end{equation}
Since $a^*\le \beta_k\le  \beta_k^*$, we also derive from (\ref{L:estsub:betaa1.gn}) and (\ref{4.1:3M}) that
\vspace{-3em}
\begin{equation}\label{eq3.75}
\begin{split}
I:=&\frac{a^*-a_k}{2}\int_{\R^2}|u_{1k}|^4\dx +\frac{a^*-b}{2}\int_{\R^2}|u_{2k}|^4\dx\\
&  + (a^*-\beta_k) \int_{\R^2} |u_{1k}|^2|u_{2k}|^2\dx \\
\geq&\frac{a^*-a_k}{2}\int_{\R^2}|u_{1k}|^4\dx
  \Big\{1-\frac{(\beta_k-a^*)^2}{(a^*-a_k)(a^*-b)}\\
  & +\frac{a^*-b}{a^*-a_k}
  \Big[\Big(\frac{\int_{\R^2}|u_{2k}|^4\dx}{\int_{\R^2}|u_{1k}|^4\dx}\Big)^\frac{1}{2}-\frac{\beta_k-a^*}{a^*-b}\Big]^2
\Big\} \\
= &\frac{(a^*-a_k)(a^*-b)-(\beta_k-a^*)^2}{2(a^*-b)}\int_{\R^2}|u_{1k}|^4\dx \\
&\cdot \Big[1+\frac{\varsigma_0^2(1+o(1))}
{1-\varsigma_0^2}\big(\frac{a^*-b}{\beta_k-a^*}\frac{\|u_{2k}\|_4^2}{\|u_{1k}\|_4^2}-1\big)^2\Big]\\
\geq &\frac{\lambda^{2+p_1}\varepsilon _k^{p_1}(1+o(1))}{a^*(a^*-b)} \ \text{as}\ \ k\to\infty,\vspace{-0.4em}
\end{split}
\end{equation}
where the identity in the last inequality holds if and only if (\ref{eq3.16}) holds. Using (\ref{eq3.74}) and (\ref{eq3.75}) we then deduce that
\begin{equation*}\vspace{-0.4em}
\lim\limits_{k\to\infty}\frac{e(a_k,b,\beta_k)}{\varepsilon_k^{p_1}}
\geq\frac{\lambda^{2+p_1}}{a^*(a^*-b)}\frac{p_1 +2}{p_1}.
\end{equation*}
Together with (\ref{eq3.73}), this indicates that the identity  in the last inequality of (\ref{eq3.75}) holds, and (\ref{eq3.16}) is thus proved.

We next prove (\ref{eq3.17}) as follows. Since $u_{2k}\not\equiv0$ in $\R^2$ for sufficiently large $k>0$, we then deduce from \eqref{ineq:GNQ} and  (\ref{ineq:b1beta.mult})  that
\begin{equation}\label{eq3.33}
\begin{split}
 II:= &\inte(|\nabla u_{1k}|^2+|\nabla u_{2k}|^2)dx\\
 &-\inte \Big(\frac{a_k}{2}|u_{1k}|^4 +\frac{b}{2}|u_{2k}|^4 +\beta_k|u_{1k}|^2|u_{2k}|^2\Big)dx\\
  \geq & \Big(\frac{a^*}{2\|u_{1k}\|_2^2}-\frac{a_k}{2}\Big)\inte|u_{1k}|^4dx+\Big(\frac{a^*}{2\|u_{2k}\|_2^2}-\frac{b}{2}\Big)\inte|u_{2k}|^4dx\\
  &-\beta_k\Big(\inte|u_{1k}|^4dx\inte|u_{2k}|^4dx\Big)^\frac12\\
   =&\frac{a^*-a_k}{2}\Big[\frac{1}{\|u_{1k}\|_2^2}+\frac{a_k}{a^*-a_k}\frac{\|u_{2k}\|_2^2}{1-\|u_{2k}\|_2^2}+\frac{a^*(1-\|u_{2k}\|_2^2)}{(a^*-a_k)\|u_{2k}\|_2^2}\frac{\|u_{2k}\|_4^4}{\|u_{1k}\|_4^4}\\
  &+\frac{a^*-b}{a^*-a_k}\frac{\|u_{2k}\|_4^4}{\|u_{1k}\|_4^4}-
  \frac{2\beta_k}{a^*-a_k}\frac{\|u_{2k}\|_4^2}{\|u_{1k}\|_4^2}\Big]\inte|u_{1k}|^4dx\ \ \text{as}\ \ k\to\infty,
\end{split}
\end{equation}
where we have used that $\|u_{1k}\|_2^2+\|u_{2k}\|_2^2 =1$.
For simplicity, we now set  $t_k:=\frac{\|u_{2k}\|_2^2}{1-\|u_{2k}\|_2^2}$£¬ and note that
\begin{equation}\label{eq3.34}
\begin{split}
&\frac{a_k}{a^*-a_k}\frac{\|u_{2k}\|_2^2}{1-\|u_{2k}\|_2^2}
+\frac{a^*(1-\|u_{2k}\|_2^2)}{(a^*-a_k)\|u_{2k}\|_2^2}\frac{\|u_{2k}\|_4^4}{\|u_{1k}\|_4^4}\\
 =&\Big(\sqrt{\frac{a_kt_k}{a^*-a_k}}
 -\sqrt{\frac{a^*}{t_k(a^*-a_k)}}\frac{\|u_{2k}\|_4^2}{\|u_{1k}\|_4^2}\Big)^2
 +\frac{2\sqrt{a_ka^*}}{a^*-a_k}\frac{\|u_{2k}\|_4^2}{\|u_{1k}\|_4^2}\\
 \geq &\frac{2\sqrt{a_ka^*}}{a^*-a_k}\frac{\|u_{2k}\|_4^2}{\|u_{1k}\|_4^2}\ \ \text{as}\ \ k\to\infty.
\end{split}
\end{equation}
In view of \eqref{lim:betaa1.u2:u1.mult}, \eqref{4.1:3M}  and (\ref{eq3.16}), we have $\sqrt{\frac{a^*}{t_k(a^*-a_k)}}\frac{\|u_{2k}\|_4^2}{\|u_{1k}\|_4^2}\overset{k}\to+\infty$.  Therefore,  the ``=" in the last inequality of (\ref{eq3.34}) holds true if and only if
\begin{equation}\label{eq3.35}
\sqrt{\frac{a_kt_k}{a^*-a_k}}=\sqrt{\frac{a^*}{t_k(a^*-a_k)}}\frac{\|u_{2k}\|_4^2}{\|u_{1k}\|_4^2}+o(1)\ \ \text{as}\ \ k\to\infty.
\end{equation}
By (\ref{eq3.33}) and (\ref{eq3.34}), we obtain that
\begin{equation}\label{eq3.79}
\begin{split}
  II&\geq\frac{a^*-a_k}{2}\Big[1+\frac{2\sqrt{a_ka^*}-2\beta_k}{a^*-a_k}\frac{\|u_{2k}\|_4^2}{\|u_{1k}\|_4^2}
  +\frac{a^*-b}{a^*-a_k}\frac{\|u_{2k}\|_4^4}{\|u_{1k}\|_4^4}\Big]\inte|u_{1k}|^4dx\\
  &=\frac{a^*-a_k}{2}\Big[1-\Big(\frac{2(\beta_k-a^*)}{a^*-a_k}+\frac{2\sqrt{a^*}}{\sqrt{a^*}+\sqrt{a_k}}\Big)
  \frac{\|u_{2k}\|_4^2}{\|u_{1k}\|_4^2}\\
  &\qquad\quad +\frac{a^*-b}{a^*-a_k}\frac{\|u_{2k}\|_4^4}{\|u_{1k}\|_4^4}\Big]\inte|u_{1k}|^4dx\\
  &=\frac{a^*-a_k}{2}\Big[1-\frac{(\beta_k-a^*)^2}{(a^*-a_k)(a^*-b)}+o(1)\Big]\inte|u_{1k}|^4dx\ \ \text{as}\ \ k\to\infty,
\end{split}
\end{equation}
where (\ref{eq3.16}) is used in the last equality. Using \eqref{eq3.74} and (\ref{eq3.79}) we can obtain that
\begin{equation*}
\lim\limits_{k\to\infty}\frac{e(a_k,b,\beta_k)}{\varepsilon_k^{p_1}}
\geq\frac{\lambda^{2+p_1}}{a^*(a^*-b)}\frac{p_1 +2}{p_1}.
\end{equation*}
By (\ref{eq3.73}),  this  yields  that all equalities in (\ref{eq3.34}) and (\ref{eq3.79}) hold true. Therefore, (\ref{eq3.35}) is true, and then (\ref{eq3.17}) follows by applying (\ref{eq3.16}).

As for (\ref{eq3.18}), we note that $(u_{1k}, u_{2k})$ solves the system \eqref{4.1:1a} with the Lagrange multiplier $\mu _{k}$ satisfying $\mu_k\eps_k^2\to-1$ as $k\to\infty$.
By (\ref{eq3.16}) and (\ref{eq3.17}), we thus derive from above that
\begin{equation*}
\begin{split}
&\inte \big(|\nabla u_{2k}|^2+V_2(x)|u_{2k}|^2\big)dx\\
 \leq &\,\mu_k\inte|u_{2k}|^2dx+b\inte|u_{2k}|^4dx+\beta_k\|u_{1k}\|_4^2\|u_{2k}\|_4^2\\
\leq &\,C\|u_{1k}\|_4^4\frac{\beta_k-a^*}{a^*-b}+b\big(\frac{\beta_k-a^*}{a^*-b}\big)^2\|u_{1k}\|_4^4.
\end{split}
\end{equation*}
This estimate then completes the proof of (\ref{eq3.18}), and the proof of the lemma is therefore proved.
\end{proof}

\begin{proof}[\bf Proof of Theorem \ref{thm3.7}.]  Define
\begin{equation}\label{4.1:1}
\begin{split}
 \bar u_{2k}(x):= \sqrt{\frac{a^*(a ^*-b)}{\beta_k-a^*}}\varepsilon_{k}u_{2k}(\varepsilon_{k}x+x_{2k}),
\end{split}
\end{equation}
where $x_{2k}\in\R^2$ is a global maximum point of $u_{2k}$, and $\varepsilon_{k}>0$ is defined by (\ref{def:betaa1.eps.rate}).
It then follows from (\ref{eq3.16})-(\ref{eq3.18}) that
\begin{equation}\label{eq3.44}
\lim_{k\to\infty}\inte\bar u_{2k}^2dx=a^*,\,\, \lim_{k\to\infty}\inte\bar u_{2k}^4dx=2a^*
\text{ and }\inte|\nabla \bar u_{2k}|^2dx\leq C<\infty.
\end{equation}
Following these, one can derive (see \cite{GWZZ,GZZ,LL}) that there exists $C>0$, independent of $k$, such that
$$\bar u_{2k}(0)=\|u_{2k}(x)\|_\infty>C>0.$$
We also recall from \eqref{4.1:1a}   that $ \bar u_{2k}$ satisfies
\begin{equation}\label{eq3.46}
\begin{split}
&-\Delta \bar u_{2k}+\eps_k^2V_2(\eps_kx+x_{2k})\bar u_{2k}\\
=&\mu_k \eps_k^2 \bar u_{2k}+\frac{\beta_k-a^*}{a^*(a^*-b)}\bar u_{2k}^3+\frac{\beta_k}{a^*} \bar u_{1k}^2\Big(x+\frac{x_{2k}-x_{1k}}{\eps_k}\Big)\bar u_{2k}\ \ \text{in}\,\ \R^2,
\end{split}\end{equation}
where $\bar u_{1k}$ is given by \eqref{lim:betaa1.u.rate}.
It then follows from  the  De Giorgi-Nash-Moser theory (c.f. \cite[Theorem 4.1]{HL}) that
\begin{equation}\label{eq3.4777}
\int_{|x|<1}|\bar u_{2k}|^2dx\geq C\bar u_{2k}(0)\geq \tilde C>0.
\end{equation}
Further, since it yields from (\ref{eq3.44}) that $\{\bar u_{2k}\}$ is bounded uniformly in $H^1(\R^2)$, then there exists $0\leq \bar u_0(x)\in H^1(\R^2)$ such that $\bar u_{2k}\overset{k}\rightharpoonup \bar u_0$ in $H^1(\R^2)$, and we derive from (\ref{eq3.4777}) that $\bar u_0(x)\not \equiv 0$ in $\R^2$.
On the other hand, following the proof of \eqref{3.4:5}, one can obtain that, up to a subsequence if necessary,
$\frac{x_{2k}-x_{1k}}{\eps_k}\to x_0$ for some $x_0\in\R^2$.
Hence, it follows from (\ref{eq3.44}) that $\bar u_0$ solves the elliptic PDE
 \begin{equation}
-\Delta \bar u_0(x) +\bar u_0(x)=w^2(x+x_0)\bar u_0 (x)\ \ \text{in}\, \ \R^2.
\end{equation}
By \cite[Lemma 4.1]{Wei96}, we thus conclude from above and (\ref{eq3.44}) that
\begin{equation}\label{3.1:gamma}
\bar u_0(x)= \gamma _0w(x+x_0)\ \ \text{in}\, \ \R^2,\, \   0<\gamma \le 1.
\end{equation}

\textbf{We claim  that $\gamma  _0=1$ in (\ref{3.1:gamma}).} Actually, for any $\delta>0$ one can choose $R>0$ large that
\begin{equation}\label{eq3.51}
\int_{R\le |x|<R+1}|\bar u_0|^2dx\le \delta^2\ \text{ and }\ 2\int_{|x|\ge R}|w(x+x_0)|^4dx\le\delta^2.
\end{equation}
For above fixed $R>0$, we then choose a cut-off function $\varphi_R(x)\in C^2(\R^2)$ such that $\varphi_R(x)\equiv0$ for $|x|\le R$, $\varphi_R(x)\in (0,1]$ for $R< |x|< R+1$ and  $\varphi_R(x)\equiv1$ for $|x|\ge R+1$, where $|\nabla\varphi_R(x)|\leq C$ holds for $C>0$ independent of $R$.
Multiplying both sides of (\ref{eq3.46}) by $\varphi_R \bar u_{2k}$ and integrating   over $\{x\in\R^2:|x|\ge R\}$,  it then follows that
\begin{equation}\label{eq3.52}
\begin{split}
&\int_{|x|\ge R}\varphi_R|\nabla\bar u_{2k}|^2
+\int_{R\le |x|<R+1}\bar u_{2k}\nabla \bar u_{2k}\nabla \varphi_R
-\mu_k\eps_k^2\int_{|x|\ge R}\varphi_R\bar u_{2k}^2\\
\leq& \frac{\beta_k-a^*}{a^*(a^*-b)}\inte\bar u_{2k}^4
+\frac{\beta_k}{a^*} \Big[\int_{|x|\ge R}\bar u_{1k}^4\big(x+\frac{x_{2k}-x_{1k}}{\eps_k}\big)\Big]^\frac12\Big(\inte \bar u_{2k}^4\Big)^\frac12,
\end{split}
\end{equation}
where the H\"{o}lder inequality (\ref{ineq:b1beta.mult}) is used.
Since
\begin{equation}\label{eq3.53}
\bar u_{2k}\to \bar u_0 \text{ strongly in $L^p_{\rm loc}(\R^2)$ for any $2\leq p<\infty$}
,\end{equation}
it then follows from (\ref{eq3.44}) and (\ref{eq3.51}) that
\begin{equation*}
\Big|\int_{R\le |x|<R+1}\bar u_{2k}\nabla \bar u_{2k}\nabla \varphi_Rdx\Big|\leq C\|\nabla \bar u_{2k}\|_2\Big(\int_{R\le |x|<R+1}\bar u_{2k}^2dx\Big)^\frac12\leq C\delta.
\end{equation*}
We also derive from \eqref{lim:betaa1.u.rate} and (\ref{eq3.51}) that
$$ \int_{|x|\ge R}\bar u_{1k}^4\Big(x+\frac{x_{2k}-x_{1k}}{\eps_k}\Big)dx\leq 2\int_{|x|\ge R}|w(x+x_0)|^4dx\le\delta^2.$$
Since $\beta_k\searrow a^*$ and $\mu_k\eps_k^2\to -1$ as $k\to\infty$,  it then yields from  (\ref{eq3.52}) and above that
$$\int_{|x|\ge R+1} \big(|\nabla\bar u_{2k}|^2+|\bar u_{2k}|^2\big )\dx\leq C\delta  \, \text{  if  $k$  large enough},$$
where $\delta>0$ is arbitrary.
Using (\ref{eq3.53}), this yields that
\begin{equation}\label{eq3.55}
\bar u_{2k}(x)\overset{k}\to \bar u_0(x)=\gamma _0 w(x+x_0)  \text{ strongly in } L^2(\R^2).
\end{equation}
We then deduce from (\ref{eq3.44}) that $a^*=\lim_{k\to\infty}\|\bar u_{2k}\|_2^2=\|\gamma  _0w\|_2^2=\gamma^2 _0 a^*$. Therefore, we have $\gamma _0=1$ in (\ref{3.1:gamma}), and the claim is thus proved.

The same argument of proving Lemma \ref{lem3.4A} further gives that $x_0=0$, and $x_{2k}$ is the unique maximum point of  $u_{2k}$. Therefore, we now conclude that $\bar u_{2k}\overset{k}\rightharpoonup w$ in $H^1(\R^2)$. Similar to those in \cite{GLWZ}, one can also derive from (\ref{4.1:1a}) that $( u_{1k}, u_{2k})$ satisfies the exponential decay (\ref{4:conexp1}) and (\ref{4:conexp2}). Following these, the standard elliptic regularity theory further yields that $\bar u_{2k}\to w$  uniformly in $\R^2$ as $k\to\infty$. Therefore, (\ref{thm1.2:M}) holds true in view of \eqref{lim:betaa1.u.rate}, which then completes the proof of Theorem \ref{thm3.7}.
\end{proof}

\section{Uniqueness of Nonnegative Minimizers}

This section is devoted to the proof of Theorem \ref{thm1.3} on the uniqueness of nonnegative minimizers  for $e(a, b, \beta)$, where    $a\nearrow a^*$ and $0<b<a^*$ and $\beta \ge a^*$ satisfies (\ref{5:beta}).  We first note that $ (u_0, v_0)= (w, w)$ is a positive solution of the following system
\begin{equation}\label{uniq:limit-1}
\arraycolsep=1.5pt
\left\{\begin{array}{lll}
\ \Delta u -  u +  u ^3   &=0   \,\ \mbox{in}\,\  \R^2,\\ [2mm]
 \Delta v -  v +    u ^2 v &=0   \,\ \mbox{in}\,\  \R^2,\,\
\end{array}\right.
\end{equation}
where $w>0$ is a unique positive solution of (\ref{equ:w}).
We claim  that the positive solution $(u_0,v_0)= (w, w)$ is {\em non-degenerate}, in the sense that the solution set of the linearized system for (\ref{uniq:limit-1}) about $(u_0,v_0)$ satisfying
\begin{equation} \label{uniq:limit-2} \arraycolsep=1.5pt
\left\{\begin{array}{lll}
\quad \ \mathcal{L}_1(\phi_1)&:=&\Delta \phi_1-  \phi_1 +3u_0^2 \phi_1 =0   \,\ \mbox{in}\,\  \R^2, \\ [2.5mm]
\mathcal{L}_2(\phi_1)\phi_2&:=&\Delta \phi_2-  \phi_2+u_0^2 \phi_2 +2u_0 v_0\phi_1 =0   \,\ \mbox{in}\,\  \R^2,\,\
 \end{array}\right.\end{equation}
satisfies
\begin{equation}\label{uniq:limit-3}
\left(\begin{array}{cc} \phi_1\\[2mm]
 \phi_2\end{array}\right)=b_0\left(\begin{array}{cc} 0\\[2mm]
w\end{array}\right)+\sum _{j=1}^{2}b_j\left(\begin{array}{cc} \frac{\partial w}{\partial x_j}\\[2mm] \frac{\partial w}{\partial x_j}
\end{array}\right)
\end{equation}
for some constants $b_j$, where $j=0, 1, 2$. Actually, one can note from  \cite[Lemma 4.1]{Wei96} that the solution set of $\mathcal{L}_1\phi_1=-\Delta \phi_1+\big[1-3w^2\big] \phi_1 =0$ in $\R^2$ satisfies $\phi_1=\sum _{j=1}^{2}b_j\frac{\partial w}{\partial x_j}$ for some constants $b_j$, where $j= 1, 2$. Since it also follows from \cite[Lemma 4.1]{Wei96} that the solution set of $-\Delta \phi +(1- w^2 )\phi =0$ in $\R^2$ satisfies $ \phi=b_0w$ for some constant $b_0$, the claim (\ref{uniq:limit-3}) therefore follows.


For convenience, in the following we always suppose that $(u_k,v_k)$ is a nonnegative minimizer of $e(a_k,b,\beta_k)$ satisfying  (\ref{5:beta}).
Then $(u_k,v_k)$ satisfies
\begin{equation}\label{uniq:a-1H}
\arraycolsep=1.5pt
\left\{\begin{array}{lll}
-\Delta u_k +V_1(x)u_k =\mu_{ k} u_k +a_ku_k^3 +\beta _kv_k^2 u_k   \,\ &\mbox{in}\,\  \R^2,\\[2mm]
-\Delta v_k +V_2(x)v_k \, =\mu_{k} v_k +bv_k^3 +\beta _ku_k^2 v_k   \ \ &\mbox{in}\,\  \R^2,\,\
\end{array}\right.
\end{equation}
where $\mu _{k}\in \R$ is a suitable Lagrange multiplier and satisfies
\begin{equation}\label{uniq:a-6H}
 \mu_k=e(a_k,b,\beta _k)- \displaystyle \frac{a_k}{2} \inte   u_k^4 - \displaystyle \frac{b}{2 }\inte    v_k^4-\displaystyle \beta _k \inte u_k^2v_k^2.
\end{equation}
Under the assumptions of Theorem \ref{thm1.3}, one can further check from (\ref{uniq:a-6H}) and previous sections that $\mu _{k}$ satisfies
\begin{equation}\label{uniq:a-6HH}
\lim_{k\to\infty} \mu_k\varepsilon^2_{k}=- 1,
\end{equation}
where $\varepsilon _k>0$ is defined by
\begin{equation}\label{4:eps}
  \varepsilon_k:=\frac{1}{\lambda}\Big[(a^*-a_k)(a^*-b)-(\beta _k-a^*)^2\Big]^\frac{1}{2+p_1}>0,\,\  \lambda =\Big[\frac{p_1(a^*-b)}{2}H_1(y_0)\Big]^\frac{1}{2+p_1},
\end{equation}
and $H_1(y_0)>0$ is defined in \eqref{H}.

\subsection{Proof of Theorem \ref{thm1.3}}

In this subsection we shall complete the proof of Theorem \ref{thm1.3}. Since $(a_k,b,\beta_k)$ satisfies (\ref{5:beta}), we then obtain from  Proposition \ref{Prop:a1beta.3} that the nonnegative solution $(u_k,v_k)$ of $e(a_k,b,\beta _k)$  satisfies the limit behavior (\ref{lim:betaa1.u.rate}) as $k\to\infty$. It further follows from Theorem \ref{thm:1.1} that $v_k\not\equiv 0$ for sufficiently large $k>0$.
By Theorem \ref{thm:1.1} and Lemma \ref{lem3.4A}, if $v_k(\varepsilon_{k}x+y_k)=C_\infty \sigma _k\bar v_k(x)$, where $\sigma _k=\|v_k\|_\infty>0$, $C_\infty=\frac{1}{\|w\|_\infty}> 0$ and $y_k\in\R^2$ is a unique maximal point of $v_k$, then we have
\begin{equation}\label{Uniq:3}
  \bar v_k(x)\to w(x)\,\  \text{uniformly in $\R^2$ as} \,\ k\to\infty.
\end{equation}
We now suppose that there exist two different nonnegative minimizers $(u_{1,k},v_{1,k})$ and $(u_{2,k},v_{2,k})$ of $e(a_k,b,\beta _k)$ satisfying (\ref{5:beta}). Let $(x_{1,k},y_{1,k})$ and $(x_{2,k},y_{2,k})$ be the unique maximum point of $(u_{1,k},v_{1,k})$ and $(u_{2,k},v_{2,k})$, respectively. Note from (\ref{uniq:a-1H}) that the nonnegative minimizer $(u_{i,k},v_{i,k})$ solves the system
\begin{equation}\label{uniq:a-1}
 \begin{cases}
-\Delta u_{i,k} +V_1(x)u_{i,k} =\mu_{i,k} u_{i,k} +a_ku_{i,k}^3 +\beta _k   v_{i,k}^2 u_{i,k}   \,\ \mbox{in}\,\  \R^2,\\
-\Delta v_{i,k} +V_2(x)v_{i,k}\, =\mu_{i,k} v_{i,k} +bv_{i,k}^3 +\beta _k   u_{i,k}^2 v_{i,k}  \ \ \ \ \mbox{in}\,\  \R^2,
\end{cases}
\end{equation}
where $\mu _{i,k}\in \R$ is a suitable Lagrange multiplier satisfying (\ref{uniq:a-6H}) and (\ref{uniq:a-6HH}) with $\mu _k=\mu _{i,k}$ for $i=1, 2$.
Motivated by (\ref{Uniq:3}), we define
\begin{equation}\label{uniq:a-2}
\bar u_{i,k}(x)=\sqrt{a^*} \varepsilon_{k}  u_{i,k}(\varepsilon_{k}x+x_{2,k})\,\  \text{and}\,\ v_{i,k}(\varepsilon_{k}x+x_{2,k})=C_\infty \sigma _k\bar v_{i,k}(x),
\end{equation}
where $i=1, 2$, $\sigma _k=\|v_{2,k}\|_\infty>0$ and $C_\infty=\frac{1}{\|w\|_\infty}> 0$.
By Theorem \ref{thm:1.1}, we then obtain from (\ref{lim:betaa1.u.rate}), (\ref{4.1:3D}) and (\ref{Uniq:3}) that
\begin{equation}\label{uniq:a-3}
\big(\bar u_{i,k}(x), \bar v_{i,k}(x)\big)\to \big(u_0,v_0\big)\equiv (w, w)
\end{equation}
uniformly in $\R^2$ as $k\to\infty$, where $(u_0 ,v_0 )= (w, w)$ is a positive solution of the system (\ref{uniq:limit-1}), and  $\big(\bar u_{i,k}(x), \bar v_{i,k}(x)\big)$ satisfies the system
\begin{equation}\label{uniq:a-4}\arraycolsep=1.5pt
 \begin{cases}
\quad-\Delta \bar u_{i,k}+\eps _k^2V_1(\eps _k x+x_{2k})\bar u_{i,k}\\
=  \mu_{i,k}\eps _k^2 \bar u_{i,k}+\displaystyle\frac{a_k}{a^*}\bar u_{i,k}^3+\beta _k C_\infty^2\sigma _k^2\eps_k^2\bar v_{i,k}^2\bar u_{i,k}\ \ \text{in}\,\ \R^2,\\
\quad -\Delta \bar v_{i,k}+\eps _k^2V_2(\eps _k x+x_{2k})\bar v_{i,k}\\
=\mu_{i,k}\eps _k^2 \bar v_{i,k}+bC_\infty^2\sigma _k^2\eps_k^2\bar v_{i,k}^3+\displaystyle\frac{\beta _k}{a^*} \bar u_{i,k}^2 \bar v_{i,k}\ \ \text{in}\,\ \R^2.
\end{cases}
\end{equation}
One can check that $\varepsilon_{k}^2(\mu_{2,k}-\mu_{1,k})$ satisfies
\begin{equation}\label{uniq:a-6}\arraycolsep=1.5pt
 \begin{array}{lll}
\varepsilon_{k}^2(\mu_{2,k}-\mu_{1,k})&=- \displaystyle \frac{a_k}{2}\varepsilon_{k}^2\inte (  u_{2,k}^4- u_{1,k}^4)- \displaystyle \frac{b}{2 }\varepsilon_{k}^2\inte (  v_{2,k}^4- v_{1,k}^4)\\[3mm]
&\quad-\displaystyle \beta _k \varepsilon_{k}^2\inte \Big[v_{2,k}^2(  u_{2,k}^2- u_{1,k}^2)+u_{1,k}^2(  v_{2,k}^2- v_{1,k}^2)\Big]\\[3mm]
&=- \displaystyle \frac{a_k}{2(a^*)^2}\inte ( \bar u_{2,k}^4- \bar u_{1,k}^4)- \displaystyle \frac{b}{2} C_\infty^4\sigma _k^4\eps_k^4\inte (  \bar v_{2,k}^4- \bar v_{1,k}^4)\\[3mm]
\quad &\quad-\displaystyle\frac{\beta _k }{a^*}  C_\infty^2\sigma _k^2\eps_k^2\inte \Big[\bar v_{2,k}^2( \bar  u_{2,k}^2- \bar u_{1,k}^2)+\bar u_{1,k}^2( \bar  v_{2,k}^2- \bar v_{1,k}^2)\Big],
\end{array}
\end{equation}
where we have used (\ref{uniq:a-6H}). Recall also from Proposition \ref{Prop:a1beta.3} that both $\bar u_{i,k}(x)$ and $ \nabla \bar u_{i,k}(x)$ decay exponentially as $|x|\to\infty$ for $i=1$ and $2$. Further, one can derive from (\ref{uniq:a-4}) that both $\bar v_{i,k}(x)$ and $\nabla \bar v_{i,k}(x)$ also admit the similar exponential decay as $|x|\to\infty$ for $i=1$ and $2$.

%

We also define
\begin{equation}\label{uniq:B-1}
\big(\hat u_{i,k}(\varepsilon_{k} x+x_{2,k}), \hat v_{i,k}(\varepsilon_{k} x+x_{2,k})\big):=   \big( \bar u_{i,k}( x), \bar  v_{i,k}(x)\big), \ \ \mbox{where}\ \ i=1,2,
\end{equation}
so that
\begin{equation}\label{uniq:B-2}
\big(\hat u_{i,k}(\varepsilon_{k} x+x_{2,k}), \hat v_{i,k}(\varepsilon_{k} x+x_{2,k})\big)\to \big(u_0,v_0\big)\equiv (w, w)
\end{equation}
uniformly in $\R^2$ as $k\to\infty$ by (\ref{uniq:a-3}).
Note that $\big(\hat u_{i,k}(x), \hat v_{i,k}(x)\big)$ satisfies the system
\begin{equation}\label{5.2:0}\arraycolsep=1.5pt
\left\{\begin{array}{lll}
&- \varepsilon_{k}^2\Delta \hat u_{i,k} +\varepsilon_{k}^2V_1(x)\hat u_{i,k} \\[2mm]
=&\mu_{i,k} \varepsilon_{k}^2\hat u_{i,k} +\displaystyle\frac{a_k}{a^*}\hat u_{i,k}^3 +\displaystyle \beta _k  C_\infty^2\sigma _k^2\eps_k^2 \hat v_{i,k}^2 \hat u_{i,k}   \,\, \mbox{in}\,\  \R^2,\\[2.5mm]
&- \varepsilon_{k}^2\Delta \hat v_{i,k} +\varepsilon_{k}^2V_2(x)\hat v_{i,k}  \\[2mm]
=&\mu_{i,k} \varepsilon_{k}^2\hat v_{i,k} +b C_\infty^2\sigma _k^2\eps_k^2\hat v_{i,k}^3 +\displaystyle\frac{\beta _k  }{a^*} \hat u_{i,k}^2 \hat v_{i,k} \ \  \,\ \mbox{in}\,\  \R^2.\,\
\end{array}\right.
\end{equation}
Since $(u_{1,k},v_{1,k})\not \equiv (u_{2,k},v_{2,k})$,  we define
\begin{equation}\label{uniq:B-7}\arraycolsep=1.5pt
 \begin{array}{lll}
  \hat  \xi_{1,k}(x)&=\displaystyle\frac{ \hat   u_{2,k}(x)-   \hat  u_{1,k}(x)}{\|  \hat  u_{2,k}-  \hat   u_{1,k}\|_{L^\infty(\R^2)}+\frac{1}{\eps _k}\| \hat   v_{2,k}-  \hat  v_{1,k}\|_{L^2(\R^2)}},\\[4mm]
 \hat \xi_{2,k}(x)&=\displaystyle\frac{ \hat   v_{2,k}(x)-  \hat   v_{1,k}(x)}{\| \hat   u_{2,k}-   \hat  u_{1,k}\|_{L^\infty(\R^2)}+\frac{1}{\eps _k}\| \hat   v_{2,k}- \hat   v_{1,k}\|_{L^2(\R^2)}},
\end{array}
\end{equation}
which is different from those used in \cite{GLWZ}. We then have the following local estimates of $(\hat\xi_{1,k}, \hat\xi_{2,k})$.

\begin{lemma}\label{lem4.3} Assume that $(a_k, b, \beta _k)$ satisfies (\ref{5:beta}). Then for any $x_0\in\R^2$, there exists a small constant $\delta >0$  such that
\begin{equation}
    \int_{\partial B_\delta (x_0)} \Big( \eps ^2_k |\nabla \hat \xi_{i,k}|^2+ \frac{1}{2} \hat \xi_{i,k}^2+ \eps ^2_k  V_i(x)\hat \xi_{i,k}^2\Big)dS=O( \eps ^2_k)\,\ \text{as}\,\ k\to\infty,\,\   i=1, 2.
\label{5.2:6}
\end{equation}
\end{lemma}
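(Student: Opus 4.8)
The plan is to derive \eqref{5.2:6} by testing the system for the difference functions against a suitable cut-off and integrating by parts, exploiting that the right-hand side inhomogeneities are controlled by $O(\eps_k^2)$ times bounded quantities. First I would write down the equation satisfied by $\hat\xi_{1,k}$: subtracting the first equations of \eqref{5.2:0} for $i=2$ and $i=1$ and dividing by the normalizing denominator $\mathcal{D}_k:=\|\hat u_{2,k}-\hat u_{1,k}\|_{L^\infty}+\frac{1}{\eps_k}\|\hat v_{2,k}-\hat v_{1,k}\|_{L^2}$, one obtains
\begin{equation*}
-\eps_k^2\Delta\hat\xi_{1,k}+\eps_k^2 V_1(x)\hat\xi_{1,k}
=\mu_{2,k}\eps_k^2\hat\xi_{1,k}+(\mu_{2,k}-\mu_{1,k})\eps_k^2\frac{\hat u_{1,k}}{\mathcal D_k}
+\frac{a_k}{a^*}\frac{\hat u_{2,k}^3-\hat u_{1,k}^3}{\mathcal D_k}+\beta_k C_\infty^2\sigma_k^2\eps_k^2\frac{\hat v_{2,k}^2\hat u_{2,k}-\hat v_{1,k}^2\hat u_{1,k}}{\mathcal D_k},
\end{equation*}
and an analogous identity for $\hat\xi_{2,k}$. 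The cubic differences factor as $\hat u_{2,k}^3-\hat u_{1,k}^3=(\hat u_{2,k}^2+\hat u_{2,k}\hat u_{1,k}+\hat u_{1,k}^2)(\hat u_{2,k}-\hat u_{1,k})$, so after dividing by $\mathcal D_k$ they become bounded coefficients times $\hat\xi_{1,k}$ (resp. $\hat\xi_{2,k}$, $\hat\xi_{1,k}$), using \eqref{uniq:B-2} and the fact that $\|\hat\xi_{1,k}\|_{L^\infty}\le 1$ and $\frac{1}{\eps_k}\|\hat\xi_{2,k}\|_{L^2}\le 1$ by construction. The factor $\eps_k^2(\mu_{2,k}-\mu_{1,k})$ is $o(1)$ — indeed by \eqref{uniq:a-6} it is itself a combination of integrals of the difference functions against bounded weights, hence $O(\mathcal D_k)\cdot o(1)$ after dividing — so that term is $o(1)$ as well; and $\sigma_k^2\eps_k^2\to 0$ by \eqref{4.1:2}. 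Thus both $\hat\xi_{i,k}$ satisfy second-order elliptic equations of the schematic form $-\eps_k^2\Delta\hat\xi_{i,k}+\eps_k^2V_i(x)\hat\xi_{i,k}=(\mu_{i,k}\eps_k^2+o(1))\hat\xi_{i,k}+(\text{bounded})\cdot(\hat\xi_{1,k}\text{ or }\hat\xi_{2,k})$, with $\mu_{i,k}\eps_k^2\to-1$ by \eqref{uniq:a-6HH}.

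Next I would establish the pointwise decay $|\hat\xi_{i,k}(x)|\le Ce^{-|x|/C}$ uniformly in $k$, by a comparison-function argument: since $\mu_{i,k}\eps_k^2\to-1$ and all the other coefficients in the equation decay exponentially (the cubic-difference coefficients contain powers of $\hat u_{i,k},\hat v_{i,k}$, which decay exponentially by Proposition \ref{Prop:a1beta.3} and the remark following \eqref{uniq:a-6}), the equation for $\eps_k^2(|\nabla\hat\xi_{i,k}|^2+\cdots)$ admits a supersolution of the form $Ce^{-|x|/2}$ outside a large ball, exactly as in the exponential-decay arguments of \cite{GLWZ} quoted for \eqref{4:conexp1}–\eqref{4:conexp2}. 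Equivalently, one may observe that $\hat\xi_{i,k}$ is, after the rescaling $x\mapsto (x-x_{2,k})/\eps_k$, precisely $\xi_{i,k}$ of \eqref{K:uniq:a-7} up to normalization, and those are known to be bounded in $C^{2,\alpha}_{\mathrm{loc}}$ and to decay; pulling that back gives the claimed bound together with $\eps_k|\nabla\hat\xi_{i,k}(x)|\le Ce^{-|x|/4}$.

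Granting the uniform exponential bounds, the conclusion \eqref{5.2:6} is a mean-value/averaging argument. For a fixed $x_0$ and each small $r$, multiply the equation for $\hat\xi_{i,k}$ by $\hat\xi_{i,k}$, integrate over $B_r(x_0)$, and integrate by parts to get
\begin{equation*}
\int_{B_r(x_0)}\Big(\eps_k^2|\nabla\hat\xi_{i,k}|^2+\eps_k^2V_i\hat\xi_{i,k}^2\Big)
-\eps_k^2\int_{\partial B_r(x_0)}\hat\xi_{i,k}\,\partial_\nu\hat\xi_{i,k}\,dS
=\int_{B_r(x_0)}\big[(\mu_{i,k}\eps_k^2+o(1))+O(1)\big]\hat\xi_{i,k}^2\big(\text{schematically}\big),
\end{equation*}
so that $\int_{B_r(x_0)}\big(\eps_k^2|\nabla\hat\xi_{i,k}|^2+\tfrac12\hat\xi_{i,k}^2+\eps_k^2V_i\hat\xi_{i,k}^2\big)=O(\eps_k^2)+\eps_k^2\int_{\partial B_r(x_0)}|\nabla\hat\xi_{i,k}||\hat\xi_{i,k}|\,dS$ once one replaces $\mu_{i,k}\eps_k^2$ by $-1$ and absorbs the $-\hat\xi_{i,k}^2$ into $-\tfrac12\hat\xi_{i,k}^2$ plus the bounded term (using again $\|\hat\xi_{1,k}\|_\infty\le1$, $\frac1{\eps_k}\|\hat\xi_{2,k}\|_2\le1$, and the decay estimates to bound $\int_{B_r}\hat\xi_{i,k}^2$). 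Integrating this inequality in $r$ over $(\delta,2\delta)$ and using the coarea formula, together with the uniform $C^{2,\alpha}_{\mathrm{loc}}$ bound on $\hat\xi_{i,k}$ (so that $\eps_k^2\int_{B_{2\delta}(x_0)}(|\nabla\hat\xi_{i,k}|^2+\cdots)=O(\eps_k^2)$), we can select a radius $\delta\in(\delta,2\delta)$ — relabelled $\delta$ — for which the boundary integral itself is $O(\eps_k^2)$. The main obstacle is the bookkeeping: one has to check carefully that every coefficient appearing after dividing by $\mathcal D_k$ is genuinely $O(1)$ (or $o(1)$) uniformly in $k$, in particular that $\eps_k^2(\mu_{2,k}-\mu_{1,k})/\mathcal D_k=o(1)$, which requires feeding \eqref{uniq:a-6} back into itself; once that is in place, the remaining integration-by-parts/averaging step is routine. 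I would expect to defer the full computation to Appendix A, as the paper indicates.
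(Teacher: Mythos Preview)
Your outline has the right shape --- derive the linear equation for $\hat\xi_{i,k}$, test it against $\hat\xi_{i,k}$, then pass from a volume bound to a boundary bound by averaging in the radius --- but there is one concrete error. You justify the volume bound $\eps_k^2\int_{B_{2\delta}(x_0)}(|\nabla\hat\xi_{i,k}|^2+\cdots)=O(\eps_k^2)$ by appealing to ``the uniform $C^{2,\alpha}_{\mathrm{loc}}$ bound on $\hat\xi_{i,k}$''. This fails: elliptic regularity gives a $C^{2,\alpha}_{\mathrm{loc}}$ bound on the \emph{rescaled} function $\xi_{i,k}(x)=\hat\xi_{i,k}(\eps_kx+x_{2,k})$, which after undoing the scaling says only $|\hat\xi_{i,k}|\le C$ and $\eps_k|\nabla\hat\xi_{i,k}|\le C$, hence $\eps_k^2\int_{B_{2\delta}}|\nabla\hat\xi_{i,k}|^2=O(1)$, not $O(\eps_k^2)$. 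Your exponential-decay claim $|\hat\xi_{i,k}(x)|\le Ce^{-|x|/C}$ has the same scaling slip; the correct pull-back is $Ce^{-|x-x_{2,k}|/(C\eps_k)}$.

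The paper's argument is more direct and avoids any pointwise decay of $\hat\xi_{i,k}$. One multiplies the equation for $\hat\xi_{1,k}$ by $\hat\xi_{1,k}$ and integrates over all of $\R^2$; the key observation is that every term on the right-hand side carries at least one factor of $\hat u_{j,k}$ or $\hat v_{j,k}$, and since $\int_{\R^2}\hat u_{j,k}^2=\eps_k^2\int_{\R^2}\bar u_{j,k}^2=O(\eps_k^2)$, the crude bounds $\|\hat\xi_{1,k}\|_\infty\le 1$ and $\|\hat\xi_{2,k}\|_{L^2}\le\eps_k$ (immediate from the definition \eqref{uniq:B-7}) already force the whole right-hand side to be $O(\eps_k^2)$. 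Combined with $-\mu_{2,k}\eps_k^2\to 1$ this gives the global bound $\int_{\R^2}\big(\eps_k^2|\nabla\hat\xi_{1,k}|^2+\tfrac12\hat\xi_{1,k}^2+\eps_k^2V_1\hat\xi_{1,k}^2\big)\le C\eps_k^2$, after which the averaging lemma \cite[Lemma~4.5]{Cao} selects a good $\delta$; the case $i=2$ is identical. Your alternative route via exponential decay of $\xi_{i,k}$ (by comparison on \eqref{step-1:9}) would in fact give the stronger conclusion $o(e^{-c\delta/\eps_k})$ on the boundary integral, since $\partial B_\delta(x_0)$ stays a fixed positive distance from $x_{2,k}\to 0$; but then the local integration-by-parts and the averaging step are redundant, and you must first obtain $\|\xi_{2,k}\|_\infty\le C$ by one round of elliptic regularity before the comparison can be run.
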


The proof of Lemma \ref{lem4.3} is given in Appendix A. Associated to $(\hat \xi_{1,k},\hat \xi_{2,k})$, it is also convenient to define
\begin{equation}\label{uniq:a-7}\arraycolsep=1.5pt
\begin{array}{lll}
\xi_{1,k}(x)&=\displaystyle\frac{ \bar  u_{2,k}(x)-   \bar u_{1,k}(x)}{\|  \bar u_{2,k}-  \bar  u_{1,k}\|_{L^\infty(\R^2)}+\| \bar  v_{2,k}-  \bar v_{1,k}\|_{L^2(\R^2)}},\\[4mm]
\xi_{2,k}(x)&=\displaystyle\frac{ \bar  v_{2,k}(x)-  \bar  v_{1,k}(x)}{\| \bar  u_{2,k}-   \bar u_{1,k}\|_{L^\infty(\R^2)}+\| \bar  v_{2,k}- \bar  v_{1,k}\|_{L^2(\R^2)}},
\end{array}
\end{equation}
so that
\begin{equation}\label{uniq:B-3}
\xi_{i,k}(x)=\hat\xi_{i,k}(\eps_kx+x_{2,k}),\, \ \text{where} \, \ i=1, 2.
\end{equation}
In the following we shall complete the proof of Theorem \ref{thm1.3} by considering separately three different cases.

(1).\, We now consider the first case where $u_{2,k}\not \equiv u_{1,k}$ and $v_{2,k}\not \equiv v_{1,k}$ in $\R^2$, for which we shall continue the proof of Theorem \ref{thm1.3} by the following six steps:

\vskip 0.1truein

\noindent{\em  Step 1.}
There exists a subsequence (still denoted by $\{a_k\}$) of $\{a_k\}$ such that
\[(\xi_{1,k}, \xi_{2,k})\to (\xi_{10}, \xi_{20}) \,\ \text{in}\,\  C_{loc}(\R^2)  \,\ \text{in}\,\   k\to\infty,\]
where $(\xi_{10}, \xi_{20})$ satisfies
\begin{equation}\label{uniq:limit-A3}
\left(\begin{array}{cc}   \xi_{10}\\[2mm]
\xi_{20}\end{array}\right)=\displaystyle b_0 \left(\begin{array}{cc}   0 \\[2mm]
w\end{array}\right)+\sum _{j=1}^{2}b_j\left(\begin{array}{cc} \frac{\partial w}{\partial x_j}\\[2mm] \frac{\partial w}{\partial x_j}
\end{array}\right)+c_0 \left(\begin{array}{cc}  w+x\cdot \nabla w \\[2mm]
w+x\cdot \nabla w \end{array}\right)
\end{equation}
for some constants $c_0$ and $b_j$ with $j=0, 1, 2$.

Following (\ref{uniq:a-4}), one can check from (\ref{uniq:a-7}) that $(\xi_{1,k}, \xi_{2,k})$ satisfies
\begin{equation}\label{step-1:9}\arraycolsep=1.5pt
 \left\{\begin{array}{lll}
&  \Delta \xi_{1,k}-\varepsilon_{k}^2V_1(\varepsilon_{k} x+x_{2,k})\xi_{1,k}+\mu_{2,k} \varepsilon_{k}^2\xi_{1,k}\\[2mm]
&+\displaystyle\frac{a_k}{a^*}\big(\bar u_{2,k}^2+\bar u_{2,k}\bar u_{1,k}+\bar u_{1,k}^2\big)\xi_{1,k} \\[2mm]
 &+\displaystyle \beta _k  C_\infty^2\sigma _k^2\eps_k^2 \big[\bar  v_{1,k}^2\xi_{1,k}+\bar u_{2,k}( \bar v_{2,k}+\bar v_{1,k})\xi_{2,k}\big] =\displaystyle c_k\bar u_{1,k}  \,\ \mbox{in}\,\  \R^2,\\[2mm]
 &  \Delta \xi_{2,k}-\varepsilon_{k}^2V_2(\varepsilon_{k} x+x_{2,k})\xi_{2,k}+\mu_{2,k} \varepsilon_{k}^2\xi_{2,k}\\[2mm]
 &+\displaystyle b  C_\infty^2\sigma _k^2\eps_k^2 \big(\bar v_{2,k}^2+\bar v_{2,k}\bar v_{1,k}+\bar v_{1,k}^2\big)\xi_{2,k} \\[2mm]
 &+\displaystyle\frac{\beta _k  }{a^*}\big[\bar  u_{1,k}^2\xi_{2,k}+\bar v_{2,k}( \bar u_{2,k}+\bar u_{1,k})\xi_{1,k}\big] =\displaystyle c_k\bar v_{1,k} \,\ \mbox{in}\,\  \R^2.
\end{array}\right.
\end{equation}
Using (\ref{uniq:a-6}), the coefficient $c_{k}$ satisfies
\begin{equation}\label{step-1:10}
\arraycolsep=1.5pt
 \begin{array}{lll}
c_k:&=\displaystyle\frac{-\varepsilon_{k}^2(\mu_{2,k}-\mu_{1,k})}{\| \bar u_{2,k}-  \bar u_{1,k}\|_{L^\infty(\R^2)}+\| \bar v_{2,k}- \bar v_{1,k}\|_{L^2(\R^2)}}\\[3mm]
&= \displaystyle \frac{a_k}{2(a^*)^2}\inte ( \bar u_{2,k}^2+ \bar u_{1,k}^2)( \bar u_{2,k}+\bar u_{1,k})\xi_{1,k}\\[3mm]
&\quad +\displaystyle \frac{b}{2}C_\infty^4\sigma _k^4\eps_k^4\inte ( \bar v_{2,k}^2+ \bar v_{1,k}^2)( \bar v_{2,k}+\bar v_{1,k})\xi_{2,k}\\[3mm]
\quad &\quad +\displaystyle\frac{\beta _k}{a^*}C_\infty^2\sigma _k^2\eps_k^2\inte \Big[\bar v_{2,k}^2( \bar  u_{2,k}+ \bar u_{1,k})\xi_{1,k}+\bar u_{1,k}^2( \bar  v_{2,k}+ \bar v_{1,k})\xi_{2,k}\Big].
\end{array}\end{equation}
Since $\xi_{1,k}$ is bounded uniformly in $\R^2$ and $\xi_{2,k}$ is bounded uniformly  in $L^2(\R^2)$, the standard elliptic regularity theory (cf. \cite[Corollary 7.11]{GT}) then implies from (\ref{step-1:10}) that $\|\xi _{1,k}\|_{C^{\alpha }_{loc}(\R^2)}\le C$ for some $\alp \in (0,1)$, where the constant $C>0$ is independent of $k$. Therefore, up to a subsequence if necessary, we have $(\xi_{1,k}, \xi_{2,k})\to (\xi_{10}, \xi_{20})$ in $C_{loc}(\R^2)$ as $k\to\infty$, where the vector function $(\xi_{10}, \xi_{20})$ satisfies
\begin{equation}\label{uniq:limit-A1}
\arraycolsep=1.5pt
\left\{\begin{array}{lll}
\qquad \quad \quad\, \Delta \xi _{10}-  \xi_{10}+3u_0^2\xi_{10}&=&\displaystyle \frac{2}{a^*} \Big( \inte u_0^3\xi_{10}\Big)w    \ \ \mbox{in}\,\  \R^2,\\ [3mm]
 \Delta  \xi_{20}-    \xi_{20}  +   u_0^2   \xi_{20}+2  u_0 v_0  \xi_{10}&=&\displaystyle \frac{2}{a^*} \Big( \inte u_0^3\xi_{10}\Big)w   \ \ \mbox{in}\,\  \R^2,
\end{array}\right.
\end{equation}
and $ (u_0, v_0)= (w, w)$. We then obtain from (\ref{uniq:limit-A1}) that there exist constants $b_1$, $b_2$ and $c_0$ such that
\[
 \xi _{10}=b_1\frac{\partial w}{\partial x_1}+b_2\frac{\partial w}{\partial x_2}+c_0 (w+x\cdot \nabla w),\,\ c_0=\frac{1}{a^*}\inte w^3\xi_{10}.
\]
We thus derive from (\ref{uniq:limit-3}) and (\ref{uniq:limit-A1}) that $(\xi_{10}, \xi_{20})$ satisfies (\ref{uniq:limit-A3}) for some constants $c_0$ and $b_j$ with $j=0, 1, 2$, and Step 1 is thus established.

\vskip 0.05truein

\noindent{\em  Step 2.} We claim that if $\delta >0$ is small, we then have the following Pohozaev-type identities
 \begin{equation}\label{5.2:10}\arraycolsep=1.5pt\begin{array}{lll}
 o(e^{-\frac{C\delta}{\eps_k}})&=&\displaystyle\eps _k^{3+p_1} \intB  \frac{\partial V_1\big(x+\frac{x_{2,k}}{\eps_k}\big)}{\partial  x_j}\big(\bar u_{2,k}+\bar u_{1,k}\big)   \xi _{1,k}\\[3mm]
&&+\displaystyle\eps _k^{3+p_2} (a^*C_\infty^2\sigma _k^2\eps_k^2)\intB \frac{\partial V_2\big(x+\frac{x_{2,k}}{\eps_k}\big)}{\partial  x_j}\big(\bar v_{2,k}+\bar v_{1,k}\big)\xi _{2,k}dx
\end{array} \end{equation}
as $k\to\infty$, where $j=1,\,2$.

To prove the above claim, multiply the first equation of (\ref{5.2:0}) by $\frac{\partial \hat u_{i,k}}{\partial  x_j}$, where $i,j=1,2$, and integrate over $B_\delta (x_{2,k})$, where $\delta >0$ is small and given by (\ref{5.2:6}). It then gives that
\begin{equation}\arraycolsep=1.5pt\begin{array}{lll}
&&-\eps _k^2\displaystyle\intB\frac{\partial \hat u_{i,k}}{\partial  x_j}\Delta \hat u_{i,k}+\eps _k^2\displaystyle\intB V_1(x)\frac{\partial \hat u_{i,k}}{\partial  x_j} \hat u_{i,k}\\[4mm]
&=&\mu_{i,k}\eps _k^2\displaystyle\intB \frac{\partial \hat u_{i,k}}{\partial  x_j} \hat u_{i,k}+\displaystyle\frac{a_k}{a^*}\intB \frac{\partial \hat u_{i,k}}{\partial  x_j} \hat u_{i,k}^3\\[4mm]
&&
+\displaystyle \beta _k C_\infty^2\sigma _k^2\eps_k^2\intB \frac{\partial \hat u_{i,k}}{\partial  x_j} \hat u_{i,k}\hat v_{i,k}^2\\[4mm]
&=&\displaystyle\frac{\mu_{i,k}}{2}\eps _k^2\intPB \hat u_{i,k}^2\nu _jdS+\displaystyle\frac{a_k}{4a^*}\intPB \hat u_{i,k}^4\nu _jdS\\[4mm]
&&+\displaystyle\frac{1}{2}\beta _k C_\infty^2\sigma _k^2\eps_k^2\intB \frac{\partial \hat u_{i,k}^2}{\partial  x_j}  \hat v_{i,k}^2,
\end{array}\label{5.2:7}
\end{equation}
where $\nu =(\nu _1,\nu _2)$ denotes the outward unit normal of $\partial B_\delta (x_{2,k})$.
Note that
\[\arraycolsep=1.5pt\begin{array}{lll}
 &&-\eps _k^2\displaystyle\intB\frac{\partial \hat u_{i,k}}{\partial  x_j}\Delta \hat u_{i,k}\\[4mm]&=&-\eps _k^2\displaystyle\intPB\frac{\partial \hat u_{i,k}}{\partial  x_j}\frac{\partial \hat u_{i,k}}{\partial  \nu}dS+\eps _k^2\displaystyle\intB\nabla \hat u_{i,k}\cdot\nabla\frac{\partial \hat u_{i,k}}{\partial  x_j}\\[4mm]
 &=&-\eps _k^2\displaystyle\intPB\frac{\partial \hat u_{i,k}}{\partial  x_j}\frac{\partial \hat u_{i,k}}{\partial  \nu}dS+\displaystyle\frac{1}{2}\eps _k^2\intPB |\nabla \hat u_{i,k}|^2\nu _jdS,
\end{array}
\]
and
\[
 \eps _k^2\displaystyle\intB V_1(x)\frac{\partial \hat u_{i,k}}{\partial  x_j} \hat u_{i,k}= \frac{\eps _k^2}{2}\intPB V_1(x)\hat u_{i,k}^2\nu _jdS-\frac{\eps _k^2}{2}\intB \frac{\partial V_1(x)}{\partial  x_j}\hat u_{i,k}^2.
\]
We then derive from (\ref{5.2:7}) that
\begin{equation}\arraycolsep=1.5pt\begin{array}{lll}
&&\displaystyle\eps _k^2 \intB \frac{\partial V_1(x)}{\partial  x_j}\hat u_{i,k}^2+\displaystyle \beta _k C_\infty^2\sigma _k^2\eps_k^2\intB \frac{\partial \hat u^2_{i,k}}{\partial  x_j} \hat v_{i,k}^2\\[4mm]
&=&-2\eps _k^2\displaystyle\intPB\frac{\partial \hat u_{i,k}}{\partial  x_j}\frac{\partial \hat u_{i,k}}{\partial  \nu}dS+\displaystyle \eps _k^2\intPB |\nabla \hat u_{i,k}|^2\nu _jdS \\[4mm]
&& +\displaystyle\eps _k^2 \intPB V_1(x)\hat u_{i,k}^2\nu _jdS-\displaystyle \mu_{i,k}\eps _k^2\intPB \hat u_{i,k}^2\nu _jdS\\[4mm]
&&-\displaystyle\frac{ a_k}{2a^*}\intPB \hat u_{i,k}^4\nu _jdS:=\mathcal{C}_i.
\end{array}\label{5.2:8A}
\end{equation}
Similarly, we derive from the second equation of (\ref{5.2:0}) that
\[\arraycolsep=1.5pt\begin{array}{lll}
&&\displaystyle\eps _k^2 \intB \frac{\partial V_2(x)}{\partial  x_j}\hat v_{i,k}^2+\displaystyle\frac{\beta _k }{a^*}\intB \frac{\partial \hat v^2_{i,k}}{\partial  x_j} \hat u_{i,k}^2\\[4mm]
&=&-2\eps _k^2\displaystyle\intPB\frac{\partial \hat v_{i,k}}{\partial  x_j}\frac{\partial \hat v_{i,k}}{\partial  \nu}dS+\displaystyle \eps _k^2\intPB |\nabla \hat v_{i,k}|^2\nu _jdS \\[4mm]
&& +\displaystyle\eps _k^2 \intPB V_2(x)\hat v_{i,k}^2\nu _jdS-\displaystyle \mu_{i,k}\eps _k^2\intPB \hat v_{i,k}^2\nu _jdS\\[4mm]
&&-\displaystyle\frac{ b}{2} C_\infty^2\sigma _k^2\eps_k^2   \intPB \hat v_{i,k}^4\nu _jdS,
\end{array}
\]
which then implies that
 \begin{equation}\arraycolsep=1.5pt\begin{array}{lll}
&&\displaystyle\eps _k^2 \intB \frac{\partial V_2(x)}{\partial  x_j}\hat v_{i,k}^2-\displaystyle\frac{\beta _k}{a^*}\intB \frac{\partial \hat u^2_{i,k}}{\partial  x_j} \hat v_{i,k}^2\\[4mm]
&=&-2\eps _k^2\displaystyle\intPB\frac{\partial \hat v_{i,k}}{\partial  x_j}\frac{\partial \hat v_{i,k}}{\partial  \nu}dS+\displaystyle \eps _k^2\intPB |\nabla \hat v_{i,k}|^2\nu _jdS \\[4mm]
&& +\displaystyle\eps _k^2 \intPB V_2(x)\hat v_{i,k}^2\nu _jdS-\displaystyle \mu_{i,k}\eps _k^2\intPB \hat v_{i,k}^2\nu _jdS\\[4mm]
&&- \displaystyle\frac{ b}{2} C_\infty^2\sigma _k^2\eps_k^2  \intPB \hat v_{i,k}^4\nu _jdS-\displaystyle\frac{\beta _k}{a^*}\intPB   \hat v^2_{i,k}  \hat u_{i,k}^2\nu_j dS:=\mathcal{D}_i.
\end{array}\label{5.2:8B}
\end{equation}

Following (\ref{5.2:8A}) and (\ref{5.2:8B}), we thus have
\begin{equation}\arraycolsep=1.5pt\begin{array}{lll}
&\displaystyle\eps _k^2 \intB  \frac{\partial V_1(x)}{\partial  x_j}\big(\hat u_{2,k}+\hat u_{1,k}\big) \hat \xi _{1,k}\\[3mm]
=&-\displaystyle\eps _k^2 (a^*C_\infty^2\sigma _k^2\eps_k^2)\intB \frac{\partial V_2(x)}{\partial  x_j}\big(\hat v_{2,k}+\hat v_{1,k}\big) \hat \xi _{2,k}\\
&+\mathcal{I}^u_k+(a^*C_\infty^2\sigma _k^2\eps_k^2)\,\mathcal{I}^v_k,
\label{5.2:8}
\end{array}\end{equation}
where we denote
\[\arraycolsep=1.5pt\begin{array}{lll}
\mathcal{I}^u_k&=&-2\displaystyle \eps _k^2\intPB \Big[\frac{\partial \hat u_{2,k}}{\partial  x_j}\frac{\partial \hat \xi_{1,k}}{\partial  \nu}+\frac{\partial \hat \xi_{1,k}}{\partial  x_j}\frac{\partial \hat u_{1,k}}{\partial  \nu}\Big]dS\\[4mm]
&&+\eps _k^2\displaystyle\intPB\nabla \hat \xi_{1,k} \cdot\nabla \big(\hat u_{2,k}+\hat u_{1,k}\big)\nu _jdS +\displaystyle \eps _k^2 \intPB V_1(x)\big(\hat u_{2,k}+\hat u_{1,k}\big) \hat \xi _{1,k} \nu _jdS\\[4mm]
&&-\hat c_k\displaystyle\intPB \hat u_{2,k}^2\nu _jdS-\displaystyle \mu_{1,k}\eps _k^2\intPB \big(\hat u_{2,k}+\hat u_{1,k}\big) \hat \xi _{1,k}\nu _jdS \\[4mm]
&& -\displaystyle\frac{ a_k}{2a^*}\intPB \big(\hat u^2_{2,k}+\hat u^2_{1,k}\big) \big(\hat u_{2,k}+\hat u_{1,k}\big)\hat \xi _{1,k}\nu _jdS,
\end{array}\label{5.2:9A}
\]
and
\[\arraycolsep=1.5pt\begin{array}{lll}
\mathcal{I}^v_k&=&-2\displaystyle \eps _k^2\intPB \Big[\frac{\partial \hat v_{2,k}}{\partial  x_j}\frac{\partial \hat \xi_{2,k}}{\partial  \nu}+\frac{\partial \hat \xi_{2,k}}{\partial  x_j}\frac{\partial \hat v_{1,k}}{\partial  \nu}\Big]dS\\[4mm]
&&+\eps _k^2\displaystyle\intPB\nabla \hat \xi_{2,k} \cdot\nabla \big(\hat v_{2,k}+\hat v_{1,k}\big)\nu _jdS+\displaystyle \eps _k^2 \intPB V_2(x)\big(\hat v_{2,k}+\hat v_{1,k}\big) \hat \xi _{2,k} \nu _jdS \\[4mm]
&&-\hat c_k\displaystyle\intPB \hat v_{2,k}^2\nu _jdS -\displaystyle \mu_{1,k}\eps _k^2\intPB \big(\hat v_{2,k}+\hat v_{1,k}\big) \hat \xi _{2,k}\nu _jdS \\[4mm]
&& -\displaystyle\frac{ b}{2} C_\infty^2\sigma _k^2\eps_k^2\intPB \big(\hat v^2_{2,k}+\hat v^2_{1,k}\big) \big(\hat v_{2,k}+\hat v_{1,k}\big)\hat \xi _{2,k}\nu _jdS\\[4mm]
&& -\displaystyle\frac{\beta _k }{a^*}\intPB   \big[\hat u^2_{2,k}  (\hat v_{2,k}+\hat v_{1,k})\hat \xi _{2,k}+\hat v^2_{1,k}  (\hat u_{2,k}+\hat u_{1,k})\hat \xi _{1,k}\big]\nu_jdS.
\end{array}\label{5.2:9B}
\]
Here the coefficient $\hat c_k$ is defined by
\begin{equation}
\arraycolsep=1.5pt
 \begin{array}{lll}
\hat c_k:&=\displaystyle\frac{-\varepsilon_{k}^2(\mu_{2,k}-\mu_{1,k})}{\| \hat u_{2,k}-  \hat u_{1,k}\|_{L^\infty(\R^2)}+\frac{1}{\varepsilon_{k}}\| \hat v_{2,k}- \hat v_{1,k}\|_{L^2(\R^2)}}\\[3mm]
&= \displaystyle \frac{a_k}{2(a^*)^2\eps ^2_k}\inte ( \hat u_{2,k}^2+ \hat u_{1,k}^2)( \hat u_{2,k}+\hat u_{1,k})\hat \xi_{1,k}\\[3mm]
&\quad +\displaystyle \frac{b}{2\eps ^2_k}C_\infty^4\sigma _k^4\eps_k^4\inte ( \hat v_{2,k}^2+ \hat v_{1,k}^2)( \hat v_{2,k}+\hat v_{1,k})\hat \xi_{2,k}\\[3mm]
\quad &\quad +\displaystyle\frac{\beta _k}{a^*\eps ^2_k}C_\infty^2\sigma _k^2\eps_k^2\inte \Big[\hat v_{2,k}^2( \hat  u_{2,k}+ \hat u_{1,k})\hat \xi_{1,k}+\hat u_{1,k}^2( \hat  v_{2,k}+ \hat v_{1,k})\hat \xi_{2,k}\Big],
\end{array}
\label{5.2:9FF}
\end{equation}
due to (\ref{uniq:a-6}).
Since (\ref{uniq:B-7}) gives that
\[
\|\hat \xi_{1,k}\|_\infty\le 1\ \ \text{and}\,\ \inte |\hat \xi_{2,k}|^2\le  \eps_k^2,
\]
we have
\[\arraycolsep=1.5pt\begin{array}{lll}
\Big|\displaystyle\inte \hat  u^2_{1,k}(\hat  v_{2,k}+\hat  v_{1,k})\hat \xi_{2,k}\Big|&\le &\displaystyle\inte \hat  u^2_{1,k}(\hat  v_{2,k}+\hat  v_{1,k})|\hat \xi_{2,k}|\\[3mm]
&\le &\displaystyle\Big(\inte \hat  u^4_{1,k}(\hat  v_{2,k}+\hat  v_{1,k})^2\Big)^{\frac{1}{2}}\Big(\displaystyle\inte |\hat \xi_{2,k}|^2\Big)^{\frac{1}{2}}\le C\eps_k^2.
\end{array}\]
The above argument then yields that there exists a constant $C>0$ such that
\begin{equation}
|\hat c_{k}|\le   C \ \ \text{uniformly in}\, \ k.
\label{5.2:ck}
\end{equation}

Applying Lemma \ref{lem4.3}, if $\delta >0$ is small, we then deduce that
\[\arraycolsep=1.5pt\begin{array}{lll}
&&\displaystyle \eps _k^2\intPB \Big|\frac{\partial \hat u_{2,k}}{\partial  x_j}\frac{\partial \hat \xi_{1,k}}{\partial  \nu}\Big|dS\\[4mm]
&\le &\displaystyle \eps _k\Big(\intPB \Big|\frac{\partial \hat u_{2,k}}{\partial  x_j}\Big|^2dS\Big)^{\frac{1}{2}}\Big(\eps _k^2\intPB \Big|\frac{\partial \hat \xi_{1,k}}{\partial  \nu}\Big|^2dS\Big)^{\frac{1}{2}}\le C\eps _k^2e^{-\frac{C\delta}{\eps_k}}\,\ \mbox{as} \,\ k\to\infty,
\end{array}\label{5.2:9a}
\]
due to the fact that $\nabla \hat u_{2,k}(\eps _kx+x_{2,k})$ decays exponentially as mentioned soon after (\ref{uniq:a-6}), where $C>0$ is independent of $k$. Similarly, we have
\[
\eps _k^2\intPB \Big|\frac{\partial \hat \xi_{1,k}}{\partial  x_j}\frac{\partial \hat u_{1,k}}{\partial  \nu} \Big|dS\le C\eps _k^2e^{-\frac{C\delta}{\eps_k}}\,\ \mbox{as} \,\ k\to\infty,
\]
and
\[
\eps _k^2\Big|\displaystyle\intPB \nabla \hat \xi_{1,k} \cdot\nabla \big(\hat u_{2,k}+\hat u_{1,k}\big)\nu _jdS\Big|\le C\eps _k^2e^{-\frac{C\delta}{\eps_k}}\,\ \mbox{as} \,\ k\to\infty.
\]
On the other hand, we also get that
\[\arraycolsep=1.5pt\begin{array}{lll}
&&\Big|\displaystyle \eps _k^2 \intPB V_1(x)\big(\hat u_{2,k}+\hat u_{1,k}\big) \hat \xi _{1,k} \nu _jdS\Big|+\Big|\intPB \big(\hat u_{2,k}+\hat u_{1,k}\big) \hat \xi _{1,k}\nu _jdS\Big| \\[4mm]
&& +\Big|\displaystyle \intPB \big(\hat u^2_{2,k}+\hat u^2_{1,k}\big) \big(\hat u_{2,k}+\hat u_{1,k}\big)\hat \xi _{1,k}\nu _jdS\Big|+\Big| \displaystyle\intPB \hat u_{2,k}^2\nu _jdS\Big|\\[4mm]
&&=o(e^{-\frac{C\delta}{\eps_k}})\,\ \mbox{as} \,\ k\to\infty,
\end{array}
\label{5.2:9b}
\]
where the exponential decay of $\hat u_{i,k}$ is also used. We thus conclude from above that
\begin{equation}\label{5.2:9aB}
\mathcal{I}^u_k+(a^*C_\infty^2\sigma _k^2\eps_k^2)\mathcal{I}^v_k=o(e^{-\frac{C\delta}{\eps_k}}) \,\ \mbox{as} \,\ k\to\infty,
\end{equation}
where $C>0$ is independent of $k$. It now follows from (\ref{5.2:8}) and (\ref{5.2:9aB}) that the claim (\ref{5.2:10}) holds for $j=1,\,2$.

\vskip 0.05truein

\noindent{\em  Step 3.} The constants $b_1=b_2=c_0=0$ in (\ref{uniq:limit-A3}), i.e., $\xi_{10}=0$ and $\xi_{20}=b_0w$ for some constant $b_0$.

Using the integration by parts,  we first note that
\begin{equation}\arraycolsep=1.5pt
\begin{array}{lll}
&&-\displaystyle\eps _k^2 \intB \big[(x-x_{2,k})\cdot \nabla \hat u_{i,k}\big] \Delta \hat u_{i,k} \\
&=&- \eps _k^2\displaystyle \intPB \frac{\partial\hat u_{i,k} }{\partial \nu }(x-x_{2,k})\cdot \nabla \hat u_{i,k}\\
&&+\displaystyle\eps _k^2 \intB \nabla \hat u_{i,k}\nabla \big[(x-x_{2,k})\cdot \nabla \hat u_{i,k}\big] \\
&=& - \eps _k^2\displaystyle \intPB \frac{\partial\hat u_{i,k} }{\partial \nu }(x-x_{2,k})\cdot \nabla \hat u_{i,k}
\\
&&+\displaystyle \frac{\eps _k^2}{2}\intPB \big[(x-x_{2,k})\cdot \nu \big]|\nabla \hat u_{i,k}|^2.
\end{array}\label{5.3:3}
\end{equation}
Multiplying the first equation of (\ref{5.2:0}) by $ (x-x_{2,k})\cdot \nabla \hat u_{i,k} $, where $i=1,2$, and integrating over $B_\delta (x_{2,k})$, where $\delta >0$ is small as before, we deduce that for $i=1,2,$
\[\arraycolsep=1.5pt\begin{array}{lll}
&&-\displaystyle\eps _k^2 \intB \big[(x-x_{2,k})\cdot \nabla \hat u_{i,k}\big] \Delta \hat u_{i,k} \\[4mm]
&=& \displaystyle\eps _k^2 \intB \big[\mu_{i,k}-V_1(x)\big] \hat u_{i,k}\big[(x-x_{2,k})\cdot \nabla \hat u_{i,k}\big]\\[4mm]
&& +
\displaystyle \frac{ a_k}{a^*} \intB \hat u_{i,k}^3\big[(x-x_{2,k})\cdot \nabla \hat u_{i,k}\big]\\[4mm]
&&
+\displaystyle \frac{\beta _k }{2} C_\infty^2\sigma _k^2\eps_k^2\intB \hat v_{i,k}^2\big[(x-x_{2,k})\cdot \nabla \hat u^2_{i,k}\big]\\[4mm]
&=& -\displaystyle\frac{\eps _k^2}{2} \intB\hat u_{i,k}^2\Big\{2\big[\mu_{i,k}-V_1(x)\big]-(x-x_{2,k})\cdot \nabla V_1(x)\Big\}\\[4mm]
&&+\displaystyle\frac{\eps _k^2}{2} \intPB\hat u_{i,k}^2\big[\mu_{i,k}-V_1(x)\big](x-x_{2,k})\nu dS\\[4mm]
&&-\displaystyle \frac{a_k}{2a^*} \intB \hat u_{i,k}^4+\displaystyle \frac{  a_k}{4a^*} \intPB \hat u_{i,k}^4(x-x_{2,k})\nu dS\\[4mm]
&&-\displaystyle  \beta _k C_\infty^2\sigma _k^2\eps_k^2 \intB \hat u_{i,k}^2\hat v_{i,k}^2-\displaystyle \frac{\beta _k }{2} C_\infty^2\sigma _k^2\eps_k^2\intB \hat u_{i,k}^2\big[(x-x_{2,k})\cdot \nabla \hat v^2_{i,k}\big]\\[4mm]
&&+\displaystyle \frac{\beta _k }{2}C_\infty^2\sigma _k^2\eps_k^2 \intPB \hat u_{i,k}^2\hat v_{i,k}^2 (x-x_{2,k})\nu dS.
\end{array}\]
Since $x\cdot \nabla V_1(x)=p_1V_1(x)$, this yields that
\begin{equation}\arraycolsep=0.5pt
\begin{array}{lll}
&&-\displaystyle\eps _k^2 \intB \big[(x-x_{2,k})\cdot \nabla \hat u_{i,k}\big] \Delta \hat u_{i,k} \\[4mm]
&=& -\mu_{i,k}\displaystyle\eps _k^2 \inte \hat u_{i,k}^2+\displaystyle\frac{2+p_1}{2}\eps _k^2 \inte V_1(x)\hat u_{i,k}^2-\displaystyle\frac{\eps _k^2}{2} \inte \big[x_{2,k}\cdot \nabla V_1(x)\big]\hat u_{i,k}^2\\[4mm]
&&-\displaystyle \frac{  a_k}{2a^*} \inte \hat u_{i,k}^4-\displaystyle  \beta _k C_\infty^2\sigma _k^2\eps_k^2 \inte \hat u_{i,k}^2\hat v_{i,k}^2\\[4mm]
&&-\displaystyle \frac{\beta _k }{2}C_\infty^2\sigma _k^2\eps_k^2 \intB \hat u_{i,k}^2\big[(x-x_{2,k})\cdot \nabla \hat v^2_{i,k}\big]+I_i,
\end{array}\label{5.3:1}
\end{equation}
where the lower order term $I_i$ satisfies
\begin{equation}\arraycolsep=1.5pt
\begin{array}{lll}
I_i&=& \mu_{i,k}\displaystyle\eps _k^2 \int _{\R^2\backslash B_\delta (x_{2,k})} \hat u_{i,k}^2-\displaystyle\frac{2+p_1}{2}\eps _k^2 \int _{\R^2\backslash B_\delta (x_{2,k})} V_1(x)\hat u_{i,k}^2
\\[4mm]
&&+\displaystyle \frac{a_k}{2a^*} \int _{\R^2\backslash B_\delta (x_{2,k})} \hat u_{i,k}^4+\displaystyle\frac{1}{2} \eps _k^2\int _{\R^2\backslash B_\delta (x_{2,k})}  \big[x_{2,k}\cdot \nabla V_1(x)\big] \hat u_{i,k}^2\\[4mm]
&&+\displaystyle\frac{\eps _k^2}{2} \intPB\hat u_{i,k}^2\big[\mu_{i,k}-V_1(x)\big](x-x_{2,k})\nu dS\\[4mm]
&&+\displaystyle \frac{  a_k}{4a^*} \intPB \hat u_{i,k}^4(x-x_{2,k})\nu dS+\displaystyle \beta _k C_\infty^2\sigma _k^2\eps_k^2 \int _{\R^2\backslash B_\delta (x_{2,k})} \hat u_{i,k}^2\hat v_{i,k}^2\\[2mm]
&&+\displaystyle \frac{\beta _k }{2} C_\infty^2\sigma _k^2\eps_k^2\intPB \hat u_{i,k}^2\hat v_{i,k}^2 (x-x_{2,k})\nu dS,\,\ i=1,2.
\end{array}\label{5.3:2}
\end{equation}
Similarly, we have
\begin{equation}\arraycolsep=1.5pt
\begin{array}{lll}
&&-\displaystyle\eps _k^2 \intB \big[(x-x_{2,k})\cdot \nabla \hat v_{i,k}\big] \Delta \hat v_{i,k} \\[2mm]
=&& - \eps _k^2\displaystyle \intPB \frac{\partial\hat v_{i,k} }{\partial \nu }(x-x_{2,k})\cdot \nabla \hat v_{i,k}\\
&&+\displaystyle \frac{\eps _k^2}{2}\intPB \big[(x-x_{2,k})\cdot \nu \big]|\nabla \hat v_{i,k}|^2,
\end{array}\label{5.3v:3}
\end{equation}
and  the second equation of (\ref{5.2:0}) yields that
\begin{equation}\arraycolsep=1.5pt
\begin{array}{lll}
&&-\displaystyle\eps _k^2 \intB \big[(x-x_{2,k})\cdot \nabla \hat v_{i,k}\big] \Delta \hat v_{i,k} \\[2mm]
&=& -\mu_{i,k}\displaystyle\eps _k^2 \inte \hat v_{i,k}^2+\displaystyle\frac{2+p_2}{2}\eps _k^2 \inte V_2(x)\hat v_{i,k}^2-\displaystyle\frac{\eps _k^2}{2} \inte \big[x_{2,k}\cdot \nabla V_2(x)\big]\hat v_{i,k}^2\\[2mm]
&&-\displaystyle \frac{b}{2}C_\infty^2\sigma _k^2\eps_k^2 \inte \hat v_{i,k}^4+\displaystyle \frac{\beta _k}{2a^*} \intB \hat u_{i,k}^2\big[(x-x_{2,k})\cdot \nabla \hat v^2_{i,k}\big]+II_i,
\end{array}\label{5.3v:1}
\end{equation}
where the lower order term $II_i$ satisfies
\begin{equation}\arraycolsep=1.5pt
\begin{array}{lll}
II_i&=& \mu_{i,k}\displaystyle\eps _k^2 \int _{\R^2\backslash B_\delta (x_{2,k})} \hat v_{i,k}^2-\displaystyle\frac{2+p_2}{2}\eps _k^2 \int _{\R^2\backslash B_\delta (x_{2,k})} V_2(x)\hat v_{i,k}^2\\[2mm]
&&+\displaystyle \frac{b}{2}C_\infty^2\sigma _k^2\eps_k^2 \int _{\R^2\backslash B_\delta (x_{2,k})} \hat v_{i,k}^4+\displaystyle\frac{1}{2} \eps _k^2\int _{\R^2\backslash B_\delta (x_{2,k})}  \big[x_{2,k}\cdot \nabla V_2(x)\big] \hat v_{i,k}^2\\[2mm]
&&+\displaystyle\frac{\eps _k^2}{2} \intPB\hat v_{i,k}^2\big[\mu_{i,k}-V_2(x)\big](x-x_{2,k})\nu dS\\[2mm]
&&+\displaystyle \frac{b}{4}C_\infty^2\sigma _k^2\eps_k^2 \intPB \hat v_{i,k}^4(x-x_{2,k})\nu dS,\,\ i=1,2.
\end{array}\label{5.3v:2}
\end{equation}

Since it follows from (\ref{uniq:a-6H}) and (\ref{uniq:a-2}) that
\[\arraycolsep=1.5pt\begin{array}{lll}
a^*\eps_k^4 \,e(a_k, b,\beta _k )&=&\mu_{i,k}\eps _k^2\Big[\displaystyle \inte \hat u_{i,k}^2+\displaystyle  a^*C_\infty^2\sigma _k^2\eps_k^2\inte \hat v_{i,k}^2\Big]+\displaystyle \frac{a_k}{2a^*} \inte \hat u_{i,k}^4\\[3mm]
&&+\displaystyle \frac{b}{2}a^*C_\infty^4\sigma _k^4\eps_k^4\inte \hat v_{i,k}^4+\displaystyle \beta _k C_\infty^2\sigma _k^2\eps_k^2\inte \hat u_{i,k}^2\hat v_{i,k}^2,
\end{array}\]
using (\ref{5.3:3})--(\ref{5.3v:2}), we then conclude from above that
\begin{equation} \arraycolsep=1.5pt
\begin{array}{lll}
&&a^*\eps_k^4 \,e(a_k, b,\beta _k )-\displaystyle\frac{2+p_1}{2}\eps _k^2 \inte V_1(x)\hat u_{i,k}^2\\
&&-\displaystyle\frac{2+p_2}{2} \eps _k^2(a^*C_\infty^2\sigma _k^2\eps_k^2 )\inte V_2(x)\hat v_{i,k}^2
\\[3mm]
&&+\displaystyle\frac{\eps _k^2}{2} \inte \big[x_{2,k}\cdot \nabla V_1(x)\big]\hat u_{i,k}^2+\displaystyle\frac{\eps _k^2}{2}(a^*C_\infty^2\sigma _k^2\eps_k^2 ) \inte \big[x_{2,k}\cdot \nabla V_2(x)\big]\hat v_{i,k}^2
\\[3mm]
&=&I_i+(a^*C_\infty^2\sigma _k^2\eps_k^2 )II_i+\eps _k^2\displaystyle \intPB \frac{\partial\hat u_{i,k} }{\partial \nu }(x-x_{2,k})\cdot \nabla \hat u_{i,k}\\[4mm]
&&-\displaystyle \frac{\eps _k^2}{2}\intPB \big[(x-x_{2,k})\cdot \nu \big]|\nabla \hat u_{i,k}|^2\\[4mm]
&&+\eps _k^2(a^*C_\infty^2\sigma _k^2\eps_k^2 )\displaystyle \intPB \frac{\partial\hat v_{i,k} }{\partial \nu }(x-x_{2,k})\cdot \nabla \hat v_{i,k}\\[4mm]
&&-\displaystyle \frac{\eps _k^2}{2}(a^*C_\infty^2\sigma _k^2\eps_k^2 )\intPB \big[(x-x_{2,k})\cdot \nu \big]|\nabla \hat v_{i,k}|^2
:=\mathcal{B}_i,\,\ i=1,2,
 \label{5.3:A2}
\end{array}\end{equation}
which then implies that
\begin{equation} \arraycolsep=1.5pt\begin{array}{lll}
&&\displaystyle\frac{2+p_1}{2}\eps _k^{2} \inte V_1(x)(\hat u_{2,k}+\hat u_{1,k})\hat \xi _{1,k} \\
&&-\displaystyle\frac{\eps _k^2}{2} \inte \big[x_{2,k}\cdot \nabla V_1(x)\big](\hat u_{2,k}+\hat u_{1,k})\hat \xi _{1,k} \\[3mm]
&& +\displaystyle\frac{2+p_2}{2}\eps _k^{2}(a^*C_\infty^2\sigma _k^2\eps_k^2 ) \inte V_2(x)(\hat v_{2,k}+\hat v_{1,k})\hat \xi _{2,k} \\[3mm]
&&-\displaystyle\frac{\eps _k^2}{2}(a^*C_\infty^2\sigma _k^2\eps_k^2 ) \inte \big[x_{2,k}\cdot \nabla V_2(x)\big](\hat v_{2,k}+\hat v_{1,k})\hat \xi _{2,k}:=-T_k.
 \label{5.3:A3}
\end{array}\end{equation}
We shall prove in the appendix that $T_k$ satisfies
\begin{equation}
T_k:=\frac{\mathcal{B}_2-\mathcal{B}_1}{\|\hat u_{2,k}- \hat  u_{1,k}\|_{L^\infty(\R^2)}+\frac{1}{\eps _k }\|\hat  v_{2,k}- \hat v_{1,k}\|_{L^2(\R^2)}}=o(e^{-\frac{C\delta}{\eps_k}}),
 \label{5.3:exponent}
\end{equation}
where $\mathcal{B}_i$ is defined in (\ref{5.3:A2}) for $i=1, 2$. We thus conclude from (\ref{5.3:A3}) and (\ref{5.3:exponent}) that
\begin{equation} \arraycolsep=1.5pt\begin{array}{lll}
&&\displaystyle\frac{2+p_1}{2}  \inte V_1\big(x+\frac{x_{2,k}}{\eps_k}\big)(\bar u_{2,k}+\bar  u_{1,k})  \xi _{1,k} \\[3mm]
&&-\displaystyle\frac{1}{2} \inte \Big[\frac{x_{2,k}}{\eps_k}\cdot \nabla V_1\big(x+\frac{x_{2,k}}{\eps_k}\big)\Big](\bar  u_{2,k}+\bar  u_{1,k})  \xi _{1,k} \\[3mm]
&& +\displaystyle\frac{2+p_2}{2}\eps _k^{p_2-p_1}(a^*C_\infty^2\sigma _k^2\eps_k^2 ) \inte V_2\big(x+\frac{x_{2,k}}{\eps_k}\big)(\bar  v_{2,k}+\bar  v_{1,k})  \xi _{2,k} \\[3mm]
&&-\displaystyle\frac{\eps _k^{p_2-p_1}}{2}(a^*C_\infty^2\sigma _k^2\eps_k^2 ) \inte \Big[\frac{x_{2,k}}{\eps_k}\cdot \nabla V_2\big(x+\frac{x_{2,k}}{\eps_k}\big)\Big](\bar  v_{2,k}+\bar  v_{1,k})  \xi _{2,k}\\
&&=o(e^{-\frac{C\delta}{\eps_k}}).
 \label{5.3:AA3}
\end{array}\end{equation}

We next establish Step 3 as follows.
Since $p_1\le p_2$ and $\sigma _k^2\eps_k^2\to 0$ as $k\to\infty$, we then conclude from (\ref{5.2:10}) and (\ref{5.3:AA3}) that
\[
\inte V_1\big(x+\frac{x_{2,k}}{\eps_k}\big)(\bar u_{2,k}+\bar  u_{1,k})  \xi _{1,k}=o(1)\,\ \mbox{as} \,\ k\to\infty.
\]
Following this, we then obtain from (\ref{1:H}) that
\[\arraycolsep=1.5pt\begin{array}{lll}
0&=&2\displaystyle\inte V_1 (x+y_0)w  \xi _{10}\\
&=&2c_0\displaystyle\inte V_1 (x+y_0)\big( w^2+\displaystyle\frac{1}{2}x\cdot\nabla w^2\big)\\
&=&2c_0\Big\{\displaystyle\inte V_1 (x+y_0)w^2-\displaystyle\frac{1}{2}\inte w^2\big[ 2V_1 (x+y_0)+x\cdot \nabla V_1 (x+y_0) \big] \Big\}\\[4mm]
&=&-\displaystyle p_1 c_0\inte V_1 (x+y_0)w^2+c_0\displaystyle\inte   w^2\big[y_0\cdot \nabla V_1 (x+y_0)\big]\\[4mm]
&=&-\displaystyle p_1 c_0\inte V_1 (x+y_0)w^2=-\displaystyle p_1 c_0H_1(y_0),
\end{array}\]
which therefore implies that $c_0=0$.
Using $c_0=0$, we further derive from (\ref{uniq:limit-A3}) and (\ref{5.2:10}) that
\[ \arraycolsep=1.5pt\begin{array}{lll}
0=2\displaystyle\inte  \frac{\partial V_1(x+y_0)}{\partial x_j}u_0\,\xi_{10}&=&\displaystyle2\inte \frac{\partial V_1(x+y_0)}{\partial x_j}u_0\Big(\sum ^2_{i=1}b_i\frac{\partial u_0}{\partial x_i}\Big)\\[3mm]
&=&-\displaystyle\sum ^2_{i=1}b_i\inte  \frac{\partial ^2 V_1(x+y_0)}{\partial x_j\partial x_i}u_0^2,\quad j=1,\,2,
\end{array}\]
which then gives that $b_1=b_2=0$  in (\ref{uniq:limit-A3}), due to the non-degeneracy assumption (\ref{1:H}). Therefore, we have  $c_0=b_1=b_2=0$, which implies that $\xi_{10}=0$ and $\xi_{20}=b_0w$ for some constant $b_0$.

\vskip 0.05truein

\noindent{\em  Step 4.} There exist two constants $b_{11}$ and $b_{12}$ such that $\xi_{1,k}$ satisfies
\begin{equation}\label{uniq:3TT}
  \xi_{1,k}=\Big[-b_0w+\sum ^2_{i=1}b_{1i}\frac{\partial w}{\partial x_i}\Big](a^*C_\infty^2\sigma _k^2\eps_k^2)+o(\sigma _k^2\eps_k^2) \, \ \text{as} \, \ k\to\infty,
\end{equation}
where the constant $b_0$ is the same as that of $\xi_{20}=b_0w$ given in (\ref{uniq:limit-A3}).

Actually, similar to the proof of (3.6) in \cite{GLW}, one can obtain from (\ref{step-1:9}) that
\begin{equation}\label{uniq:3TP}
\xi_{1,k}= (a^*C_\infty^2\sigma _k^2\eps_k^2)\xi_1+o(\sigma _k^2\eps_k^2),\, \ \text{as} \, \ k\to\infty,
\end{equation}
where $\xi_1$ is a unique solution of $\nabla \xi_1(0)=0$ and
\[\arraycolsep=1.5pt\begin{array}{lll}
\Delta \xi_1-\xi_1+3w^2\xi_1- \displaystyle\frac{2}{a^*}\Big(\inte w^3\xi_1\Big)w&=&-2w^2\xi_{20}+ \displaystyle\frac{2}{a^*}\Big(\inte w^3\xi_{20}\Big)w\\[3mm]
&=&-2b_0w^3+4b_0w \  \ \text{in}\  \ \R^2,
\end{array}\]
since $\xi_{20}=b_0w$.
One can check that $\xi_1$ satisfies \begin{equation}\label{uniq:3TPM}
\xi_1=-b_0w+\sum ^2_{i=1}b_{1i}\frac{\partial w}{\partial x_i}
\end{equation}
for some constants $b_{11}$ and $b_{12}$, where the constant $b_0$ is the same as that of $\xi_{20}=b_0w$ given in (\ref{uniq:limit-A3}).
Therefore, the estimate (\ref{uniq:3TT}) now follows from (\ref{uniq:3TP}) and (\ref{uniq:3TPM}).

\vskip 0.02truein

\noindent{\em  Step 5.}  $b_0=0$ in (\ref{uniq:limit-A3}), i.e., $\xi_{10}=\xi_{20}=0$.

We shall consider separately the following two cases:

\noindent\text{\em Case 1: $p_1<p_2$. }
In this case, we follow from (\ref{5.2:10}) and Step 4 that
\begin{equation}
b_{11}\frac{\partial ^2 H_1(y_0)}{\partial x_1\partial x_j}+b_{12}\frac{\partial ^2 H_1(y_0)}{\partial x_2\partial x_j}=0,\ \ j=1, 2,
 \label{5.3:BB3}
\end{equation}
which then implies that $b_{11}=b_{12}=0.$
It thus yields from (\ref{uniq:3TT}) and (\ref{5.3:AA3}) that
\begin{equation}\label{5.3:C1}\arraycolsep=1.5pt\begin{array}{lll}
0&=&-2(2+p_1)H_1(y_0)b_0+\displaystyle\Big(y_0\cdot \frac{\partial \nabla H_1(y_0)}{\partial x_1}\Big)b_{11}+\displaystyle\Big(y_0\cdot \frac{\partial \nabla H_1(y_0)}{\partial x_2}\Big)b_{12}\\[2mm]&=& -2(2+p_1)H_1(y_0)b_0,
\end{array}\end{equation}
which gives that $b_0=0$, since $H_1(y_0)>0$. Therefore, we have $\xi_{10}=\xi_{20}=0$ in this case.

\vskip 0.02truein

\noindent\text{\em Case 2: $p_1=p_2$. }
In this case,  we deduce from (\ref{5.2:10}) and Step 4 that
\begin{equation}
2b_0\frac{\partial H_2(y_0)}{\partial x_j}-b_{11}\frac{\partial ^2 H_1(y_0)}{\partial x_1\partial x_j}-b_{12}\frac{\partial ^2 H_1(y_0)}{\partial x_2\partial x_j}=0,\ \ j=1, 2.
 \label{5.3:D3}
\end{equation}
However, it yields from (\ref{uniq:3TT}) and (\ref{5.3:AA3}) that
\begin{equation}\label{5.3:D1}\arraycolsep=1.5pt\begin{array}{lll}
&&\Big\{2(2+p_1)\big[H_2(y_0)-H_1(y_0)\big]-2y_0\cdot \nabla H_2(y_0)\Big\} b_0\\[3mm]
&&+\displaystyle\Big(y_0\cdot \frac{\partial \nabla H_1(y_0)}{\partial x_1}\Big)b_{11}+\displaystyle\Big(y_0\cdot \frac{\partial \nabla H_1(y_0)}{\partial x_2}\Big)b_{12}=0.
\end{array}\end{equation}
We thus obtain from (\ref{5.3:D3}) and (\ref{5.3:D1}) that
\begin{equation}\label{5.3:D2}
\begin{pmatrix}
M_{11}  &  y_0\cdot \frac{\partial \nabla H_1(y_0)}{\partial x_1}   & y_0\cdot \frac{\partial \nabla H_1(y_0)}{\partial x_2} \\[2mm]
 2\frac{\partial H_2(y_0)}{\partial x_1} & -\frac{\partial ^2 H_1(y_0)}{\partial x_1\partial x_1} & -\frac{\partial ^2 H_1(y_0)}{\partial x_1\partial x_2}\\[2mm]
 2\frac{\partial H_2(y_0)}{\partial x_2} &  -\frac{\partial ^2 H_1(y_0)}{\partial x_1\partial x_2} & -\frac{\partial ^2 H_1(y_0)}{\partial x_2\partial x_2}
 \end{pmatrix}
\begin{pmatrix}
b_0\\[2mm] b_{11}\\[2mm] b_{12}
 \end{pmatrix}
 =0,
\end{equation}
where $M_{11}=2(2+p_1)\big[H_2(y_0)-H_1(y_0)\big]-2y_0\cdot \nabla H_2(y_0)$.
In this case, if  $H_2(y_0)\not =H_1(y_0)$,  we then derive from the non-degeneracy assumption (\ref{1:H}) that
\[ \arraycolsep=1.5pt\begin{array}{lll}
&&  \begin{vmatrix}
 2(2+p_1)\big[H_2(y_0)-H_1(y_0)\big]-2y_0\cdot \nabla H_2(y_0)\ &\ y_0\cdot \frac{\partial \nabla H_1(y_0)}{\partial x_1} \ &\ y_0\cdot \frac{\partial \nabla H_1(y_0)}{\partial x_2} \\[2mm]
 2\frac{\partial H_2(y_0)}{\partial x_1} & -\frac{\partial ^2 H_1(y_0)}{\partial x_1\partial x_1} & -\frac{\partial ^2 H_1(y_0)}{\partial x_1\partial x_2}\\[2mm]
 2\frac{\partial H_2(y_0)}{\partial x_2} &  -\frac{\partial ^2 H_1(y_0)}{\partial x_1\partial x_2} & -\frac{\partial ^2 H_1(y_0)}{\partial x_2\partial x_2}
 \end{vmatrix}\\[10mm]
 &=&
  \begin{vmatrix}
 2(2+p_1)\big[H_2(y_0)-H_1(y_0)\big] \ &\ 0 \ &\ 0 \\[2mm]
 2\frac{\partial H_2(y_0)}{\partial x_1} & -\frac{\partial ^2 H_1(y_0)}{\partial x_1\partial x_1} & -\frac{\partial ^2 H_1(y_0)}{\partial x_1\partial x_2}\\[2mm]
 2\frac{\partial H_2(y_0)}{\partial x_2} &  -\frac{\partial ^2 H_1(y_0)}{\partial x_1\partial x_2} & -\frac{\partial ^2 H_1(y_0)}{\partial x_2\partial x_2}
 \end{vmatrix}
 \\[10mm]
 &=&
2(2+p_1)\big[H_2(y_0)-H_1(y_0)\big]\det \Big(\frac{\partial ^2 H_1(y_0)}{\partial x_i\partial x_j}\big)\not =0,
\end{array}\]
which thus implies that  $b_0=0$. Therefore, we also have $\xi_{10}=\xi_{20}=0$ in this case.

\vskip 0.05truein

\noindent{\em  Step 6.} $\xi_{10}=\xi_{20}=0$ cannot occur.

Finally, let $ x_k\in\R^2$ satisfy  \begin{equation}\label{step6:D}
|\xi_{1,k}(x_k)| +\sqrt{\|\xi_{2,k}\|^2_2}=\|\xi_{1,k}(x)\|_\infty+\sqrt{\|\xi_{2,k}\|^2_2}=1.
\end{equation}
If $\xi_{1,k}\to \xi_{10}\not\equiv 0$ uniformly on $\R^2$ as $k\to\infty$, it then contradicts to the fact that $\xi_{10}\equiv 0$ on $\R^2$.

We now assume that $\xi_{1,k}\to \xi_{10}\equiv 0$ uniformly on $\R^2$ as $k\to\infty$. Since $(u_{i,k}, v_{i,k})$ decays exponentially as $|x|\to\infty$ for $i=1$ and $2$, the linear elliptic theory applied to (\ref{step-1:9}) gives that $\xi_{2,k}$ is also bounded uniformly in $\R^2$. Let $ y_k\in\R^2$ satisfy $|\xi_{2,k}(y_k)|=\|\xi_{2,k}(x)\|_\infty$. Applying the maximum principle to (\ref{step-1:9}) then yields that $|x_k|\le C$ and $|y_k|\le C$ uniformly in $k$, due to the exponential decay of $(u_{i,k}, v_{i,k})$.
By the comparison principle, one can get from (\ref{step-1:9}) that $\xi_{i,k}$ decays exponentially for $i=1$ and $2$, see \cite{GWZZ,GZZ} for similar proofs. Following these, we then conclude from (\ref{step6:D}) that there exists a large $R>0$ such that $\int_{B_R(y_k)}|\xi_{2,k}|^2dx\ge \frac{1}{2}$  uniformly in $k>0$. This further implies that $\xi_{2,k}\to \xi_{20}\not\equiv 0$ uniformly on $\R^2$ as $k\to\infty$, a contradiction again.
Therefore, the proof of Theorem \ref{thm1.3} is now complete for the first case where $u_{2,k}\not \equiv u_{1,k}$ and $v_{2,k}\not \equiv v_{1,k}$ in $\R^2$.

\vskip 0.05truein

(2).\, We next consider the second case where $u_{2,k} \equiv u_{1,k}$ and $v_{2,k}\not \equiv v_{1,k}$ in $\R^2$. In this case, we have $\xi_{1,k}\equiv 0$ and $\xi_{2,k}\not \equiv 0$ in $\R^2$, and the second equation of  (\ref{step-1:9}) gives that
\[
\xi_{2,k}\to \xi_{20}:=b_0w  \ \, \text{uniformly on}\ \, \R^2\ \, \text{as}\ \, k\to\infty
\]
for some constant $b_0$. Moreover, one can also get that both (\ref{5.2:10}) and (\ref{5.3:AA3}) hold true with $\xi_{1,k}\equiv 0$, and it follows from (\ref{5.2:10})   that
\begin{equation}\label{step6:M}
2\inte  \frac{\partial V_2(x+y_0)}{\partial x_j}w\xi_{20}=0,\,\ j=1,\, 2.
\end{equation}
On the other hand, we derive from (\ref{5.3:AA3}) that
\begin{equation} \arraycolsep=1.5pt\begin{array}{lll}
&&\displaystyle (2+p_2)  \inte V_2\big(x+\frac{x_{2,k}}{\eps_k}\big)(\bar  v_{2,k}+\bar  v_{1,k})  \xi _{2,k} \\[3mm]
&&-\displaystyle\inte\Big[\frac{x_{2,k}}{\eps_k}\cdot \nabla V_2\big(x+\frac{x_{2,k}}{\eps_k}\big)\Big](\bar  v_{2,k}+\bar  v_{1,k}) \xi _{2,k}=o(e^{-\frac{C\delta}{\eps_k}}).
 \label{5.F:AA3}
\end{array}\end{equation}
We thus reduce from (\ref{step6:M}) and (\ref{5.F:AA3}) that
\[
0=2(2+p_2)\inte V_2(x+y_0)w\xi_{20}=2b_0(2+p_2)\inte V_2(x+y_0)w^2,
\]
which then implies that $b_0=0$. Following this fact, the argument of the above Step 6 then leads to a contradiction. Therefore, Theorem \ref{thm1.3} is also proved for the second case.

\vskip 0.05truein

(3).\, As for the last case where $v_{2,k}\equiv v_{1,k}$ and $u_{2,k} \not\equiv u_{1,k}$ in $\R^2$, we have $\xi_{2,k}\equiv 0$ and $\xi_{1,k}\not \equiv 0$ in $\R^2$. Following the first equation of  (\ref{step-1:9}), we can derive that there exist some constants $b_1, b_2$ and $c_0$ such that
\[
\xi_{1,k}\to \xi_{10}:=b_1\frac{\partial w}{\partial x_1}+b_2\frac{\partial w}{\partial x_2}+c_0 (w+x\cdot \nabla w),\,\ c_0=\frac{1}{a^*}\inte w^3\xi_{10}
\]
uniformly on $\R^2$ as $k\to\infty$.  In this case, one can get that both (\ref{5.2:10}) and (\ref{5.3:AA3}) hold true with $\xi_{2,k}\equiv 0$, from which one can further deduce that $\xi_{10}\equiv 0$ in $\R^2$, see also \cite[Theorem 1.3]{GLW} for similar arguments.  Following this fact, the argument of the above Step 6 then leads again to a contradiction. This completes the proof of Theorem \ref{thm1.3}.
\qed

\appendix

\section{The proofs of Lemma \ref{lem4.3} and (\ref{5.3:exponent})}

In this appendix we shall address the proofs of Lemma \ref{lem4.3} and (\ref{5.3:exponent}).

\begin{proof}[\bf Proof of Lemma \ref{lem4.3}.] Following (\ref{uniq:B-1}), one can check from (\ref{5.2:0}) that $(\hat \xi_{1,k}, \hat \xi_{2,k})$ satisfies
\begin{equation}\label{step-1:A9}\arraycolsep=0.1pt
 \left\{\begin{array}{lll}
&  \varepsilon_{k}^2\Delta   \hat \xi_{1,k}-\varepsilon_{k}^2V_1(x)\hat \xi_{1,k}+\mu_{2,k} \varepsilon_{k}^2\hat \xi_{1,k}+\displaystyle\frac{a_k}{a^*}\big(\hat u_{2,k}^2+\hat u_{2,k}\hat u_{1,k}+\hat u_{1,k}^2\big)\hat \xi_{1,k} \\[2mm]
 &\qquad +\displaystyle \beta _k  C_\infty^2\sigma _k^2\eps_k^2 \big[\hat  v_{1,k}^2\hat \xi_{1,k}+\hat u_{2,k}( \hat v_{2,k}+\hat v_{1,k})\hat \xi_{2,k}\big] =\displaystyle \hat c_k\hat u_{1,k}  \,\ \mbox{in}\,\  \R^2,\\[2mm]
 &  \varepsilon_{k}^2\Delta \hat \xi_{2,k}-\varepsilon_{k}^2V_2(x)\hat \xi_{2,k}+\mu_{2,k} \varepsilon_{k}^2\hat \xi_{2,k}+\displaystyle b  C_\infty^2\sigma _k^2\eps_k^2 \big(\hat v_{2,k}^2+\hat v_{2,k}\hat v_{1,k}+\hat v_{1,k}^2\big)\hat \xi_{2,k} \\[2mm]
 &\qquad\qquad\quad  +\displaystyle\frac{\beta _k  }{a^*}\big[\hat  u_{1,k}^2\hat \xi_{2,k}+\hat v_{2,k}( \hat u_{2,k}+\hat u_{1,k})\hat \xi_{1,k}\big] =\displaystyle \hat c_k\hat v_{1,k} \,\ \mbox{in}\,\  \R^2,
\end{array}\right.
\end{equation}
where the coefficient $\hat c_{k}$ satisfies (\ref{5.2:9FF}).
Multiplying the first equation of (\ref{step-1:A9}) by $\hat \xi_{1,k}$ and integrating over $\R^2$, we obtain from (\ref{uniq:a-6H}) and (\ref{5.2:9FF}) that
\begin{equation}\arraycolsep=0.5pt\begin{array}{lll}
I_0&=&\displaystyle \eps ^2_k\inte |\nabla \hat  \xi_{1,k}|^2 -\mu_{2,k}\eps ^2 _k\inte  |\hat \xi_{1,k}|^2+\eps ^2_k\inte V_1(x)|\hat \xi_{1,k}|^2 \\[4mm]
&=&\displaystyle \frac{a_k}{a^*}\inte \big(\hat  u_{2,k}^2+\hat  u_{2,k}\hat  u_{1,k}+\hat  u_{1,k}^2\big)|\hat \xi_{1,k}|^2\\[4mm]
&&+\displaystyle\beta _k  C_\infty^2\sigma _k^2\eps_k^2\inte \Big[\hat v_{1,k}^2|\hat \xi_{1,k}|^2+\hat  u_{2,k}(\hat  v_{2,k}+\hat  v_{1,k})\hat \xi_{1,k}\hat \xi_{2,k}\Big]\\[4mm]
&&-\displaystyle\inte \hat u_{1,k}\hat \xi_{1,k}\Big\{\displaystyle \frac{a_k}{2(a^*)^2\eps_k^2}\inte ( \hat  u_{2,k}^2+ \hat  u_{1,k}^2)( \hat  u_{2,k}+\hat  u_{1,k})\hat \xi_{1,k}\\[3mm]
&&\qquad +\displaystyle \frac{b}{2\eps_k^2}C_\infty^4\sigma _k^4\eps_k^4\inte ( \hat  v_{2,k}^2+ \hat  v_{1,k}^2)( \hat  v_{2,k}+\hat  v_{1,k})\hat \xi_{2,k}\\[3mm]
&&\qquad +\displaystyle\frac{\beta _k}{a^*\eps_k^2}C_\infty^2\sigma _k^2\eps_k^2\inte \Big[\hat  v_{2,k}^2( \hat   u_{2,k}+ \hat  u_{1,k})\hat \xi_{1,k}+\hat  u_{1,k}^2( \hat   v_{2,k}+ \hat  v_{1,k})\hat \xi_{2,k}\Big]
\Big\}.
\end{array}\label{AA5.2:6}
\end{equation}
Recall that
\[\arraycolsep=1.5pt\begin{array}{lll}
\Big|\displaystyle\inte \hat  u_{2,k}(\hat  v_{2,k}+\hat  v_{1,k})\hat \xi_{1,k}\hat \xi_{2,k}\Big|&\le &\displaystyle\inte \hat  u_{2,k}(\hat  v_{2,k}+\hat  v_{1,k})|\hat \xi_{2,k}|\\[2mm]
&\le &\displaystyle\Big(\inte \hat  u^2_{2,k}(\hat  v_{2,k}+\hat  v_{1,k})^2\Big)^{\frac{1}{2}}\Big(\displaystyle\inte |\hat \xi_{2,k}|^2\Big)^{\frac{1}{2}}\le C\eps_k^2,
\end{array}\]
due to the fact that
\[\|\hat \xi_{1,k}\|_\infty\le 1\ \ \text{and}\,\ \inte |\hat \xi_{2,k}|^2\le  \eps_k^2.\]
Using above estimates, we derive from (\ref{AA5.2:6}) that
\begin{equation}
     I'_1:=\eps ^2_k\inte |\nabla \hat  \xi_{1,k}|^2+\frac{1}{2} \inte |\hat \xi_{1,k}|^2+ \eps ^2_k\inte V_1(x)|\hat \xi_{1,k}|^2\le I_0<C_1\eps ^2_k
\label{5.2:5}
\end{equation}
as $k\to\infty$ holds for some constant $C_1>0$.
Applying \cite[Lemma 4.5]{Cao}, we then conclude that for any $x_0\in\R^2$, there exist a small constant $\delta >0$ and $C_2>0$  such that
\[
    \int_{\partial B_\delta (x_0)} \Big[ \eps ^2_k |\nabla \hat  \xi_{1,k}|^2+ \frac{1}{2}  |\hat \xi_{1,k}|^2+ \eps ^2_k  V_1(x)|\hat \xi_{1,k}|^2\Big]dS\le C_2I'_1\le C_1C_2\eps ^2_k\,\ \mbox{as} \,\ k\to\infty,
\]
which therefore implies that (\ref{5.2:6}) holds true for $i=1$.

Similarly, applying the above argument to the second equation of (\ref{step-1:A9}), one can obtain that (\ref{5.2:6}) holds true for $i=2$, and we are therefore done.
\end{proof}


\vskip 0.1truein

\begin{proof}[\bf Proof of (\ref{5.3:exponent}).]
Following (\ref{5.3:A2}), we have  for small $\delta >0$,
\begin{eqnarray}\arraycolsep=1.5pt\begin{array}{lll}
T_k&=&\displaystyle\frac{\mathcal{B}_2-\mathcal{B}_1}{\|\hat u_{2,k}- \hat  u_{1,k}\|_{L^\infty(\R^2)}+\frac{1}{\eps _k }\|\hat  v_{2,k}- \hat v_{1,k}\|_{L^2(\R^2)}}
\\[4mm]
&=&
\displaystyle\frac{(I_2-I_1)+(a^*C_\infty^2\sigma _k^2\eps_k^2 )(II_2-II_1)}{\|\hat u_{2,k}- \hat  u_{1,k}\|_{L^\infty(\R^2)}+\frac{1}{\eps _k }\|\hat  v_{2,k}- \hat v_{1,k}\|_{L^2(\R^2)}}\\[2mm]
&&-\displaystyle \frac{\eps _k^2}{2}\intPB \big[(x-x_{2,k})\cdot \nu \big]\big(\nabla \hat u_{2,k}+\nabla \hat u_{1,k}\big)\nabla\hat \xi_{1,k}\\[2mm]
&&+\displaystyle  \eps _k^2 \intPB \Big\{\big[(x-x_{2,k})\cdot \nabla \hat u_{2,k}\big]\big(\nu \cdot \nabla\hat \xi_{1,k}\big)\\
&&\qquad \qquad\qquad\qquad \qquad\qquad +\big(\nu \cdot \nabla \hat u_{1,k}\big)\big[(x-x_{2,k})\cdot \nabla \hat \xi_{1,k}\big]\Big\}\\[2mm]
&&-\displaystyle \frac{\eps _k^2}{2}(a^*C_\infty^2\sigma _k^2\eps_k^2 )\intPB \big[(x-x_{2,k})\cdot \nu \big]\big(\nabla \hat v_{2,k}+\nabla \hat v_{1,k}\big)\nabla\hat \xi_{2,k}\\[2mm]
&&+\displaystyle  \eps _k^2 (a^*C_\infty^2\sigma _k^2\eps_k^2 )\intPB \Big\{\big[(x-x_{2,k})\cdot \nabla \hat v_{2,k}\big]\big(\nu \cdot \nabla\hat \xi_{2,k}\big)\\[2mm]
&&\qquad \qquad\qquad\qquad \qquad\qquad +\big(\nu \cdot \nabla \hat v_{1,k}\big)\big[(x-x_{2,k})\cdot \nabla \hat \xi_{2,k}\big]\Big\}
\\[2mm]
&=&\displaystyle\frac{(I_2-I_1)+(a^*C_\infty^2\sigma _k^2\eps_k^2 )(II_2-II_1)}{\|\hat u_{2,k}- \hat  u_{1,k}\|_{L^\infty(\R^2)}+\frac{1}{\eps _k }\|\hat  v_{2,k}- \hat v_{1,k}\|_{L^2(\R^2)}}+o(e^{-\frac{C\delta}{\eps_k}})\,\ \mbox{as} \,\ k\to\infty,
\end{array} \label{5.3:5}\end{eqnarray}
due to (\ref{5.2:6}), where the second equality follows by applying the argument of estimating  (\ref{5.2:9aB}). Here $I_i$ and $II_i$ satisfy (\ref{5.3:2}) and (\ref{5.3v:2}), respectively.

Using the arguments of estimating (\ref{5.2:9aB}) again, along with the exponential decay mentioned soon after (\ref{uniq:a-6}), we also derive from (\ref{5.3:2}) that for small $\delta >0$,
\begin{eqnarray*}
\arraycolsep=1.5pt\begin{array}{lll}
II&=&\displaystyle\frac{I_2-I_1}{\|\hat u_{2,k}- \hat  u_{1,k}\|_{L^\infty(\R^2)}+\frac{1}{\eps _k }\|\hat  v_{2,k}- \hat v_{1,k}\|_{L^2(\R^2)}}\\[4mm]
&=&\mu_{2,k}\displaystyle\eps _k^2 \int _{\R^2\backslash B_\delta (x_{2,k})} \big(\hat u_{2,k}+\hat u_{1,k}\big)\hat \xi_{1,k}-\displaystyle\frac{2+p_1}{2}\eps _k^2 \int _{\R^2\backslash B_\delta (x_{2,k})} \big(\hat u_{2,k}+\hat u_{1,k}\big)V_1\hat \xi_{1,k}\\[4mm]
&&+\displaystyle \frac{  a_k}{2a^*} \int _{\R^2\backslash B_\delta (x_{2,k})} \big(\hat u_{2,k}^2+\hat u_{1,k}^2\big)\big(\hat u_{2,k}+\hat u_{1,k}\big)\hat \xi_{1,k}\\[4mm]
&&+\displaystyle   \beta _k C_\infty^2\sigma _k^2\eps_k^2 \int _{\R^2\backslash B_\delta (x_{2,k})} \big[\hat v_{2,k}^2(\hat u_{2,k}+\hat u_{1,k})\hat\xi_{1,k}+\hat u_{1,k}^2 (\hat v_{2,k}+\hat v_{1,k}\big)\hat \xi_{2,k}\big]
\\[4mm]
&&+\hat c_k \displaystyle\int _{\R^2\backslash B_\delta (x_{2,k})}  \hat u_{1,k}^2+\displaystyle\frac{1}{2} \eps _k^2\int _{\R^2\backslash B_\delta (x_{2,k})}   \big[x_{2,k}\cdot \nabla V_1(x)\big]\big(\hat u_{2,k}+\hat u_{1,k}\big)\hat \xi_{1,k}\\[4mm]
&&+\displaystyle \frac{ a_k}{4a^*} \intPB   \big(\hat u_{2,k}^2+\hat u_{1,k}^2\big)\big(\hat u_{2,k}+\hat u_{1,k}\big)\hat \xi_{1,k}(x-x_{2,k})\nu dS\\[4mm]
&&-\displaystyle\frac{\eps _k^2}{2} \intPB \big(\hat u_{2,k}+\hat u_{1,k}\big)\hat \xi_{1,k}V_1(x)(x-x_{2,k})\nu dS\\[4mm]
&&+\displaystyle\frac{\mu_{2,k}\eps _k^2}{2} \intPB   \big(\hat u_{2,k}+\hat u_{1,k}\big)\hat \xi_{1,k}(x-x_{2,k})\nu dS\\[4mm]
&&
+\hat c_k \displaystyle\intPB   \hat u_{1,k}^2 (x-x_{2,k})\nu dS+\displaystyle \frac{  \beta _k }{2 }C_\infty^2\sigma _k^2\eps_k^2 \\[4mm]
&&\cdot\displaystyle\int _{\partial B_\delta (x_{2,k})} \big[\hat v_{2,k}^2(\hat u_{2,k}+\hat u_{1,k})\hat\xi_{1,k}+\hat u_{1,k}^2 (\hat v_{2,k}+\hat v_{1,k}\big)\hat \xi_{2,k}\big](x-x_{2,k})\nu dS
\end{array}
\end{eqnarray*}
as $ k\to\infty$, and hence
\begin{equation}\arraycolsep=1.5pt\begin{array}{lll}
II&=& \hat c_k \Big[ \displaystyle\int _{\R^2\backslash B_\delta (x_{2,k})}  \hat u_{1,k}^2+\displaystyle \intPB   \hat u_{1,k}^2 (x-x_{2,k})\nu dS\Big]\\[4mm]
&&+\displaystyle\frac{1}{2} \eps _k^2\int _{\R^2\backslash B_\delta (x_{2,k})}  \big[x_{2,k}\cdot \nabla V_1(x)\big]\big(\hat u_{2,k}+\hat u_{1,k}\big)\hat \xi_{1,k}+o(e^{-\frac{C\delta}{\eps_k}}),
\end{array} \label{5.3:10}
\end{equation}
as $k\to\infty$, where $\hat c_k$ is defined by (\ref{5.2:9FF}) and satisfies (\ref{5.2:ck}).
Therefore, we deduce from (\ref{5.3:5}) and (\ref{5.3:10}) that
\[
II=\displaystyle\frac{I_2-I_1}{\|\hat u_{2,k}- \hat  u_{1,k}\|_{L^\infty(\R^2)}+\frac{1}{\eps _k }\|\hat  v_{2,k}- \hat v_{1,k}\|_{L^2(\R^2)}}=o(e^{-\frac{C\delta}{\eps_k}}) \,\ \mbox{as} \,\ k\to\infty,
\]
and similarly we also have
\[
\displaystyle\frac{(a^*C_\infty^2\sigma _k^2\eps_k^2 )(II_2-II_1)}{\|\hat u_{2,k}- \hat  u_{1,k}\|_{L^\infty(\R^2)}+\frac{1}{\eps _k }\|\hat  v_{2,k}- \hat v_{1,k}\|_{L^2(\R^2)}}=o(e^{-\frac{C\delta}{\eps_k}}) \,\ \mbox{as} \,\ k\to\infty.
\]
Therefore, we conclude from (\ref{5.3:5}) that $T_k=o(e^{-\frac{C\delta}{\eps_k}})$ as $k\to\infty$, which completes the proof of  (\ref{5.3:exponent}).
\end{proof}

\bibliographystyle{amsplain}

\begin{thebibliography}{10}

\bibitem{BC} W. Z. Bao and Y. Y. Cai, \textit{Ground states of two-component Bose-Einstein condensates with an internal atomic Josephson junction}, East Asia J. Appl. Math. \textbf{1} (2011), 49--81.

 \bibitem{BW} T. Bartsch and Z. Q. Wang, \textit{Existence and multiplicity results for some superlinear elliptic problems on $\R^N$}, Comm. Partial Differential Equations \textbf{20} (1995), 1725--1741.

 \bibitem{BSW16} J. Byeon, Y. Sato and Z. Q. Wang, \textit{Pattern formation via mixed attractive and repulsive interactions for nonlinear  Schr\"{o}dinger systems}, J. Math. Pures Appl. \textbf{106} (2016),  no. 3, 477--511.

 \bibitem{BSW17} J. Byeon, Y. Sato and Z. Q. Wang, \textit{Pattern formation via mixed interactions for coupled Schr\"{o}dinger equations under Neumann boundary condition}, J. Fixed Point Theory Appl.  \textbf{19} (2017),  no. 1, 559--583.


\bibitem{Cao} D. M. Cao,  S. L. Li and P. Luo, \textit{Uniqueness of positive bound states with multi-bump for nonlinear Schr\"odinger equations}, Calc. Var. Partial Differential Equations  \textbf{54} (2015),  no. 4, 4037--4063.

\bibitem{DW} E. N. Dancer and J. C. Wei, \textit{Spike solutions in coupled nonlinear Schr\"{o}dinger equations with attractive interaction}, Trans. Amer. Math. Soc.  \textbf{361}  (2009),  no. 3, 1189--1208.

\bibitem{Deng}  Y. B. Deng, C. S. Lin and S. Yan, \textit{On the prescribed scalar curvature problem in $\R^N$, local uniqueness and periodicity}, J. Math. Pures Appl. \textbf{104}  (2015),  no. 6, 1013--1044.


\bibitem{EGBB} B. D. Esry, C. H. Greene,  J. P. Burke  and J. L. Bohn,  \textit{Hartree-Fock theory for double condensates}, Phys. Rev. Lett. \textbf{78} (1997), 3594--3597.

\bibitem{FM} L. Fanelli and E. Montefusco, \textit{On the blow-up threshold for weakly coupled nonlinear Schr\"{o}dinger equations}, J. Phys. A: Math. Theory \textbf{40} (2007), 14139--14150.

\bibitem{GNN}  B. Gidas, W. M. Ni and L. Nirenberg,  \textit{Symmetry of positive solutions of nonlinear elliptic equations in $\mathbb{R}^n$}, Mathematical analysis and applications  Part A, Adv. in Math. Suppl. Stud. Vol. \textbf{7}  (1981), 369--402.

\bibitem{GT} D. Gilbarg and  N. S. Trudinger, \textit{Elliptic Partial Differential Equations of Second Order}, Springer, (1997).


\bibitem{Grossi} M. Grossi, \textit{On the number of single-peak solutions of the nonlinear Schr\"odinger equations}, Ann. Inst H. Poincar¨¦ Anal. Non Lin¨¦aire \textbf{19}  (2002), pp. 261--280.

\bibitem{GLW} Y. J. Guo, C. S. Lin and J. C. Wei, \textit{Local uniqueness and refined spike profiles of ground states for two-dimensional attractive Bose-Einstein condensates}, SIAM J. Math. Anal. \textbf{49} (2017), 3671--3715.

\bibitem{GLWZ} Y. J. Guo, S. Li, J. C. Wei and X. Y. Zeng, \textit{Ground states of two-component attractive Bose-Einstein condensates I: Existence and uniqueness}, submitted, (2017), 40 pages. arxiv.org/abs/1707.07495

\bibitem{GS}  Y. J. Guo and R. Seiringer, \textit{On the mass concentration for Bose-Einstein condensates with attractive interactions}, Lett. Math. Phys. \textbf{104} (2014), 141--156.

\bibitem{GWZZ} Y. J. Guo, Z. Q. Wang, X. Y. Zeng and H. S. Zhou,  \textit{Properties of ground states of attractive Gross-Pitaevskii equations with multi-well potentials}, Nonlinearity \textbf{31}  (2018),  957--979.

\bibitem{GZZ} Y. J. Guo, X. Y. Zeng and H. S. Zhou, \textit{Energy estimates and symmetry breaking  in attractive Bose-Einstein condensates with ring-shaped potentials}, Ann. Inst. H. Poincar\'e Anal. Non Lin\'eaire \textbf{33} (2016), 809--828.

\bibitem{GZZ2} Y. J. Guo, X. Y. Zeng and H. S. Zhou, \textit{Blow-up solutions for two coupled Gross-Pitaevskii equations with attractive interactions},  Discrete Contin. Dyn. Syst. A \textbf{37} (2017), 3749--3786.



\bibitem {HMEWC} D. S. Hall, M. R. Matthews,  J. R. Ensher,  C. E. Wieman and E. A. Cornell, \textit{Dynamics of component separation in a binary mixture of Bose-Einstein condensates}, Phys. Rev. Lett. \textbf{81} (1998), 1539--1542.

\bibitem {HL} Q. Han and F. H. Lin, \textit{Elliptic Partial Differential Equations, Second Edition}, Courant Lect. Notes Math. Vol. 1, Courant Institute of Mathematical Science/AMS, New York, (2011).

\bibitem {Kwong} M. K. Kwong, \textit{Uniqueness of positive solutions of $\Delta u-u+u^p=0$ in $\R^N$}, Arch. Ration. Mech. Anal. \textbf{105} (1989), 243--266.

\bibitem {LN} Y. Li and W. M. Ni, \textit{Radial symmetry of positive solutions of nonlinear elliptic equations in $\R^n$}, Comm. Partial Differ. Eqns. \textbf{18} (1993), 1043--1054.

\bibitem {LL} E. H. Lieb and M. Loss, \textit{Analysis}, Graduate Studies in Math. 14, Amer. Math. Soc., Providence, RI, second edition (2001).


\bibitem {LWCMP} T. C. Lin and J. C. Wei, \textit{Ground state of N coupled nonlinear Schr\"{o}dinger equations in $\R^N$, $N\le 3$}, Comm. Math. Phys. \textbf{255} (2005), 629--653.  Erratum: Comm. Math. Phys. \textbf{277} (2008), 573--576.


\bibitem {LW} T. C. Lin and J. C. Wei, \textit{Spikes in two coupled nonlinear Schr\"{o}dinger equations}, Ann. Inst. H. Poincar¡äe Anal. Non Lin¡äeaire  \textbf{22} (2005), 403--439.

\bibitem {LW2} T. C. Lin and J. C. Wei, \textit{Spikes in two-component systems of nonlinear Schr¡§odinger equations with trapping potentials}, J. Diff. Eqns. \textbf{229} (2006), 538--569.

 \bibitem{Lions} P. L. Lions, \textit{The concentration-compactness principle in the caclulus of variations. The locally compact case. II}, Ann. Inst H. Poincar\'{e}. Anal. Non Lin\'{e}aire \textbf{1} (1984), 223--283.

\bibitem {M} M. Maeda, \textit{On the symmetry of the ground states of nonlinear Schr\"odinger equation with potential}, Adv. Nonlinear Stud. \textbf{10} (2010), 895--925.

 \bibitem{MMP} L. A. Maia,  E. Montefusco and B. Pellacci, \textit{Positive solutions for a weakly coupled nonlinear Schr\"{o}dinger system}, J. Diff. Eqns. \textbf{229} (2006), 743--767.

 \bibitem{Manakov}  S. V. Manakov, \textit{On the theory of two-dimensional stationary self-focusing of electromagnetic waves}, Soviet Phys. JETP. \textbf{38} (1974), 248--253.

 \bibitem{MS}  K. McLeod and J. Serrin,  \textit{Uniqueness of positive radial solutions of $\Delta  u + f(u)=0$ in $\R^n$}. Arch. Ration. Mech. Anal. \textbf{99} (1987), 115--145.


 \bibitem{PW} A. S. Parkins and D. F. Walls, \textit{The physics of trapped dilute-gas Bose-Einstein condensates}, Phys. Rep. \textbf{303} (1998), 1--80.

 \bibitem{RS} M.  Reed and B. Simon, \textit{Methods of Modern Mathematical Physics. IV. Analysis of Operators}, Academic Press, New York-London, (1978).

 \bibitem{Royo} J. Royo-Letelier, \textit{Segregation and symmetry breaking of strongly coupled two component Bose-Einstein condensates in a harmonic trap}, Calc. Var. Partial Differential Equations \textbf{49} (2014), 103--124.

 \bibitem{S} M. Struwe, \textit{Variational Methods: Applications to Nonlinear Partial Differential Equations and Hamiltonian Systems}, Ergebnisse Math. 34, Springer, (2008).

 \bibitem{Wang} X. F. Wang, \textit{On concentration of positive bound states of nonlinear Schr¡§odinger equations}, Comm. Math. Phys. \textbf{153} (1993), 229--244.

 \bibitem{Wei96} J. C. Wei, \textit{On the construction of single-peaked solutions to a singularly perturbed semilinear Dirichlet problem}, J. Diff. Eqns. \textbf{129}  (1996),  no. 2, 315--333.

 \bibitem{W} M. I. Weinstein, \textit{Nonlinear Schr\"odinger equations and sharp interpolations estimates}, Comm. Math. Phys. \textbf{87} (1983), 567--576.


%
%

\end{thebibliography}

\end{document}